\newtheorem{theorem}{Theorem}[section]
\newtheorem{lemma}[theorem]{Lemma}
\newtheorem{prop}[theorem]{Proposition}
\newtheorem{cor}[theorem]{Corollary}
\newtheorem*{Theorem1'}{Theorem 1'}
\theoremstyle{definition}
\theoremstyle{remark}
\newcommand \Z{{\mathbb Z}}
\newcommand \N{{\mathbb N}}
\begin{document}

\title[A study of the Wamsley group and its Sylow subgroups]{A study of the Wamsley group and its Sylow subgroups}

\author{Andrea Previtali}
\address{Dipartimento di Matematica e Applicazioni, University of Milano-Bicocca, Milano, Italy}
\email{andrea.previtali@unimib.it}

\author{Fernando Szechtman}
\address{Department of Mathematics and Statistics, University of Regina, Canada}
\email{fernando.szechtman@gmail.com}
\thanks{The second author was partially supported by NSERC grant RGPIN-2020-04062}

\subjclass[2020]{20D20, 20D10, 20D15}

\keywords{Wamsley group, Macdonald group, Sylow subgroup, $p$-group, balanced presentation}

\begin{abstract} We study the Wamsley group $\langle X,Y,Z\,|\, X^Z=X^\alpha, {}^Z Y=Y^\beta, Z^\gamma=[X,Y]\rangle$
and its Sylow subgroups, where $\alpha^\gamma\neq 1\neq \beta^\gamma$ and $\gamma>0$, obtaining the sharpest
results when $\alpha=\beta$.
\end{abstract}

\maketitle

\section{Introduction}\label{sec1}

A group presentation is balanced if it has the same number of generators as relations.
Finite groups that admit a finite balanced presentation have attracted considerable attention 
since Mennicke's discovery \cite{Me}, in 1959, of the first family of finite groups requiring 3 generators and 3 relations.
He found the groups $M(a,b,c)=\langle x,y,z\,|\, x^y=x^a, y^{z}=y^b, z^x=z^c\rangle$ 
to be finite when $a=b=c\geq 2$. Abelianizing yields that
$M(a,b,c)$ does require 3 generators when $a-1,b-1,c-1$ share a prime factor. 
In general, the finiteness of $M(a,b,c)$ was studied by Macdonald and Wamsley \cite{W}, Schenkman~\cite{S}, and Jabara~\cite{Ja}. 
A sufficient condition turns out to be $a,b,c\notin\{-1,1\}$. Upper bounds for the order of 
$M(a,b,c)$ were given by Johnson and Robertson \cite{JR}, Albar and Al-Shuaibi \cite{AA}, and Jabara \cite{Ja}.
In spite of having been carefully examined, the order of $M(a,b,c)$ is still not known in general. See \cite{Me,A,AA,Ja}
for details.

Since Mannicke's time, other 3-generator 3-relation finite groups were found \cite{W2,W3,P,J,Jam,Jam2, Al},
as well as 2-generator 2-relation finite groups \cite{M, W3, W4, CR, R, CRT, K, AS, AS2}. Perusal
of the literature indicates that, as in the case of $M(a,b,c)$, finite groups with a finite balanced presentation tend to not yield easily to a structural analysis.

In this paper, we study the Wamsley group \cite{W2}
$$
W=\langle X,Y,Z\,|\, X^Z=X^\alpha, {}^Z Y=Y^\beta, Z^\gamma=[X,Y]\rangle,
$$
and its Sylow subgroups, assuming $\gamma>0$ and $\alpha^{\gamma}\neq 1\neq \beta^{\gamma}$, obtaining definite
results when $\alpha=\beta$. In this case, for each prime number $p\mid |W|$, 
we obtain the following features of a Sylow $p$-subgroup of $W$:
its order; a presentation; a normal form for its elements; its nilpotency class. As a result, we deduce the order of $W$.
Our local work also yields that global derived length of~$W$. 

Our method consists of approaching $W$ via the Macdonald group \cite{M}
$$
G=\langle A,B\,|\, A^{[A,B]}=A^{\alpha^\gamma}, {}^{[A,B]} B=B^{\beta^\gamma} \rangle,
$$
by means of the homomorphism $\Omega:G\to W$, given by
\begin{equation}
\label{omega}
A\mapsto X,\; B \mapsto Y.
\end{equation}
The fact that the assignment (\ref{omega}) extends to a group homomorphism is ensured by the relations
$$
X^{[X,Y]}=X^{Z^\gamma}=X^{\alpha^\gamma},\; {}^{[X,Y]} Y={}^{Z^\gamma} Y=Y^{\alpha^\gamma}.
$$
Let $N$ stand for the kernel of $\Omega$. 
Since $G$ is finite by \cite{M}, it follows that $G^\Omega=\langle X,Y\rangle$ is a finite normal subgroup of 
$W$ with finite cyclic quotient $W/\langle X,Y\rangle$ of order $\delta$, where 
$\delta$ is a factor of~$\gamma$. Thus $W$ is finite and 
\begin{equation}
\label{order}
|W|=\delta\times |G|/|N|.
\end{equation}

In Theorem \ref{one}, we show that $\delta=\gamma$, that $N$ is the normal closure in $G$
of the $\gamma-1$ elements
\begin{equation}
\label{normalclo}
[A^\alpha,B^\mu][A,B]^{-1},\dots, [A^{\alpha^{\gamma-1}},B^{\mu^{\gamma-1}}][A,B]^{-1},
\end{equation}
where $\mu$ is inverse to $\beta$ modulo the order of $B$, and obtain a presentation of~$W$ which explicitly exhibits it
as an extension of $G/N$ by a cyclic group of order $\gamma$. Thus, by \cite[Section 4]{M}, the prime factors
of $|W|$ must be factors of $(\alpha^\gamma-1)(\beta^\gamma-1)\gamma$. Given a prime number $p$
dividing $(\alpha^\gamma-1)(\beta^\gamma-1)\gamma$, we study a Sylow $p$-subgroup $W_p$ of $W$.
The case when $p$ is not a factor of $(\alpha^\gamma-1)(\beta^\gamma-1)$ is trivial, so we assume
for the remainder of this section that $p\mid (\alpha^\gamma-1)(\beta^\gamma-1)$. Theorem \ref{dos} gives the analogue
of Theorem \ref{one} for $W_p$ and expresses it as an extension of $G_p/N_p$ with cyclic quotient of order $p^{v_p(\gamma)}$,
where $G_p$ is the Sylow $p$-subgroup of $G$ and $N_p=N\cap G_p$. It should be borne in mind that $G$ is nilpotent.
The proof of nilpotency given in \cite{M}
is incomplete, as explained in \cite{MS}. A full proof can be found in \cite{Sz}. 
Theorem \ref{tres} sharpens Theorem \ref{dos} when $p$ is a factor
of $(\alpha-1)(\beta-1)$. In particular, it proves that $N_p=1$ when $p|(\alpha-1)(\beta-1)$ but $p\nmid \gamma$.

Suppose for the remainder of this section that $\alpha=\beta$. In this case, a detailed description of $G_p$
is available to us \cite{MS}, which leaves $N_p$ and $G_p/N_p$ as the main obstacles to understand~$W_p$. 
Here $N_p$ is the normal closure in $G_p$ of the projections of the $\gamma-1$ elements (\ref{normalclo})
under the natural map $G\to G_p$. In order to make progress, we must explicitly compute 
the normal closure $N_p$ of these projections, and understand the structures of the quotient group $G_p/N_p$ and its
extension~$W_p$. These tasks
take up most of the paper. The behavior of $p$ is not uniform throughout this process, and we are forced to
consider three main cases as well as subcases thereof. Setting $m=v_p(\alpha^\gamma-1)$,
by Case 1 we mean that either $p>3$, or else $p=3$ with $m>1$ or $(\alpha^\gamma-1)/3\equiv 1\mod 3$.
By Case~2 we understand that $p=2$. By Case 3 we signify that $p=3$ and $\alpha^\gamma\equiv 7\mod 9$.

%Regardless of the case, for each prime number $p$ dividing $|W|$, we obtain the following features of$~W_p$: 
%its order; a presentation; a normal form for its elements; its nilpotency class. As a consequence, we deduce the order of $W$.

The commutator formulas needed to compute the projections of (\ref{normalclo}) can be found in \cite{MS2} in Cases 1 and 2.
In Case 1, an alternative derivation of this formula is given in Section~\ref{andrea}
by appealing to a recent algorithm due to Cant and Eick \cite{CE}. To apply this
algorithm, a knowledge of the factors of the lower central series of $G_p$ in Case 1 is needed.
This can be found in \cite[Theorem 8.1]{MS} and can alternatively be obtained by means of Witt's formula,
as shown in Section \ref{Witt}. Commutator formulas for Case~3 are derived in 
Section \ref{s6} with considerable effort, as $G_p$ is nilpotent of class 7 in this case.
It is only after these preliminary calculations that the determinations of $N_p$, $G_p/N_p$, and $W_p$ can be realized. This
is achieved in Section \ref{s4} for Case~1, see Theorems \ref{main1}, \ref{hzero}, and \ref{hzero2}; 
in Section \ref{s5} for Case 2, see Theorems \ref{main2} and \ref{main3}; and in Section~\ref{s7} for Case~3,
see Theorem~\ref{poop2}. Combining these results, we obtain the order of $W$ in Theorem~\ref{orderW}.
The nilpotency class of $W_p$ is given in Theorem \ref{nilp} and,
unlike the case of $G_p$, there is no upper bound for this class, in general. The solvability length of $W$ can
be found in Theorem \ref{solv}. %It can be as high as~4.

In terms of notation, given a group  $T$ and $a,b\in T$, we set
$
a^b=a^{-1}ba,\; {}^a b=aba^{-1}.
$
For a fixed $b\in T$, the map $a\mapsto a^b$ is called conjugation by $b$. If $a_1,a_2,\dots\in T$, we define
$$
[a_1]=a_1\text{ and }[a_1,\dots,a_n]=[a_1,\dots,a_{n-1}]^{-1}a_n^{-1}[a_1,\dots,a_{n-1}]a_n\text{ for }n>1.
$$

\section{Approaching $W$ via $G$}\label{s2}

%This is as far as \cite{W} goes in studying the order of $W(\alpha,\beta,\gamma)$. What follows below is new.
Since $A^{\alpha^\gamma}$ and $A$ (resp. $B^{\beta^\gamma}$ and $B$) 
are conjugate in $G$, it follows that $\alpha^\gamma$ (resp. $\beta^\gamma$)
is relatively prime to the order of $A$ (resp. $B$) and hence so is $\alpha$ (resp. $\beta$), and we fix
an integer $\mu$ that is inverse of $\beta$ modulo the order of $B$. A simple but useful observation is
the following identity in $G$:
$$
[A^{\alpha^\gamma},B^{\mu^\gamma}]=[A,B].
$$
This is so because conjugation by $[A,B]$ is an automorphism of $G$ that fixes $[A,B]$
and sends $A$ to $A^{\alpha^\gamma}$, $B$ to $B^{\mu^\gamma}$, and hence $[A,B]$ to $[A^{\alpha^\gamma},B^{\mu^\gamma}]$.

Recall the following well-known gadget (cf. \cite[Chapter III, Section 7]{Z}) to construct group extensions with finite cyclic factors.

\begin{theorem}\label{z} Let $T$ be an arbitrary group and $L$ a cyclic group of finite order $n\in\N$. Suppose that $t\in T$
and that $\Lambda$ is an automorphism of $T$ fixing $t$ and such that $\Lambda^n$ is conjugation by $t$. Then there
is a group $E$ containing $T$ as a normal subgroup, such that $E/T\cong L$, and for some $g\in E$ of order $n$ modulo $T$, we have 
$g^n=t$ and $\Lambda$ is conjugation by $g$.
\end{theorem}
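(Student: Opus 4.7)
The plan is to build $E$ as a quotient of a semidirect product. I would form $H = T \rtimes_\Lambda \Z$, in which the infinite cyclic generator acts on $T$ as $\Lambda$, and let $c \in H$ be the element whose first coordinate is $t^{-1}$ and whose second coordinate is $n$; then set $E = H / \langle c \rangle$, take $g$ to be the image of $(1, 1)$ in $E$, and identify $T$ with its image under $s \mapsto [(s, 0)]$. A direct computation in $H$ gives $(1, 1)^n = (1, n) = c \cdot (t, 0)$, so $g^n = t$ in $E$; and conjugation of $(s, 0)$ by $(1, 1)$ reproduces $(\Lambda(s), 0)$, by the very definition of the semidirect product, so conjugation by $g$ induces $\Lambda$ on $T$.

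The only substantive step is verifying that $\langle c \rangle$ is a normal subgroup of $H$, and the two hypotheses on $\Lambda$ are precisely what is needed. Because $\Lambda(t) = t$, the powers of $c$ collapse to $c^k = (t^{-k}, nk)$ for every $k \in \Z$; in particular $\langle c \rangle \cap (T \times \{0\}) = 1$, which is what guarantees that $T$ embeds into $E$. The assumption that $\Lambda^n$ is conjugation by $t$ is exactly what makes the conjugate of $c$ by any $(s, 0)$ equal to $c$; combined with the immediate identity $(1,1)\, c\, (1,1)^{-1} = c$, this shows that $c$ is central in $H$, and hence $\langle c \rangle$ is normal.

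Granted these observations, the remaining points are routine. Injectivity of $T \hookrightarrow E$ follows from $\langle c \rangle \cap (T \times \{0\}) = 1$; normality of $T$ in $E$ is inherited from $H$; and the quotient $E/T$ is identified with $\Z / n\Z$ via the second coordinate, so $E/T \cong L$ and the image of $g$ has order $n$ modulo $T$ by construction. The only obstacle, and it is a minor one, is tracking the conventions for ``conjugation by $t$'' and for the semidirect product; once these are fixed, the verification that $c$ is central, and all subsequent checks, reduce to one-line calculations.
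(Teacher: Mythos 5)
Your construction is correct: forming $T\rtimes_\Lambda\Z$ and quotienting by the subgroup generated by $c=(t^{-1},n)$ — whose centrality follows exactly from the two hypotheses $\Lambda(t)=t$ and $\Lambda^n$ being conjugation by $t$, and whose trivial intersection with $T$ gives the embedding — is the standard proof of this cyclic-extension lemma. The paper supplies no proof of its own, only a citation to Zassenhaus, and your argument is essentially the one found there; the only loose end is fixing the left/right conjugation convention so that $g^{-1}sg=\Lambda(s)$, which you have already flagged and which is harmless.
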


Our next result shows that $|W|=\gamma |G|/|N|$,
describes $N$ as the normal closure of (\ref{normalclo}), and presents $W$ 
as an extension of $G/N$ by a cyclic group of order $\gamma$.

\begin{theorem}\label{one} Let $N_0$ be the normal closure in $G$ of the $\gamma-1$ elements (\ref{normalclo}).
Then
$$
N=N_0,\;\delta=\gamma,\;|W|=\gamma|G|/|N_0|,
$$
and $W$ is generated by elements  $a,b,d$ subject to the defining relations:
\begin{equation}
\label{defrel}
a^{[a,b]}=a^{\alpha^\gamma}, 
{}^{[a,b]} b=b^{\beta^\gamma}, [a^\alpha,b^\mu]=[a,b],\dots, [a^{\alpha^{\gamma-1}},b^{\mu^{\gamma-1}}]=[a,b],
\end{equation}
\begin{equation}
\label{defrel5}
a^d=a^\alpha, b^d=b^\mu\, (\text{that is}, {}^d b=b^\beta), d^\gamma=[a,b].
\end{equation}
\end{theorem}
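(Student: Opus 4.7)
The plan is to prove the four conclusions $N = N_0$, $\delta = \gamma$, $|W| = \gamma|G|/|N_0|$, and the presentation all at once, by building the extension furnished by Theorem~\ref{z} and matching it against $W$. First observe that $N_0 \subseteq N$: in $W$, iterating $X^Z = X^\alpha$ and (inverting ${}^Z Y = Y^\beta$) $Y^Z = Y^\mu$ gives $X^{Z^i} = X^{\alpha^i}$ and $Y^{Z^i} = Y^{\mu^i}$, so $[X^{\alpha^i}, Y^{\mu^i}] = [X,Y]^{Z^i}$; since $Z^\gamma = [X,Y]$ commutes with $Z^i$, this equals $[X,Y]$, and each generator in (\ref{normalclo}) lies in $N$.

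Next set $T = G/N_0$ and attempt to define an endomorphism $\Lambda$ of $T$ by $A \mapsto A^\alpha$, $B \mapsto B^\mu$. Well-definedness is checked relation by relation: the Macdonald relations $A^{[A,B]} = A^{\alpha^\gamma}$ and ${}^{[A,B]}B = B^{\beta^\gamma}$ are carried to the same statements with $A^\alpha$, $B^\mu$, $[A^\alpha, B^\mu]$ in place of $A$, $B$, $[A,B]$, and collapse via the identity $[A^\alpha, B^\mu] = [A,B]$ holding in $T$ to powers of the original relations. The extra relations $[A^{\alpha^i}, B^{\mu^i}] = [A,B]$ for $1 \leq i \leq \gamma - 1$ are shifted by $\Lambda$ to $[A^{\alpha^{i+1}}, B^{\mu^{i+1}}] = [A^\alpha, B^\mu] = [A,B]$, which for $i < \gamma - 1$ sit among the defining relations of $T$ and for $i = \gamma - 1$ reduce to $[A^{\alpha^\gamma}, B^{\mu^\gamma}] = [A,B]$, an identity valid already in $G$ and noted at the start of Section~\ref{s2}. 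Since $\alpha$ and $\beta$ are coprime to the orders of $A$ and $B$, $\Lambda$ is surjective; as $T$ is a quotient of the finite group $G$, it is therefore an automorphism. A direct check gives $\Lambda([A,B]) = [A,B]$, while $\Lambda^\gamma$ coincides with conjugation by $[A,B]$ because $\Lambda^\gamma(A) = A^{\alpha^\gamma} = A^{[A,B]}$ and similarly for $B$ (using $\beta\mu \equiv 1$).

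Applying Theorem~\ref{z} with $t = [A,B]$ and $n = \gamma$ yields a group $E$ containing $T$ as a normal subgroup of index $\gamma$, generated over $T$ by some $d$ with $d^\gamma = [A,B]$ and conjugation by $d$ realizing $\Lambda$; thus $E$ is generated by $A, B, d$ subject to (\ref{defrel}) and (\ref{defrel5}), and $|E| = \gamma|G|/|N_0|$. Let $W'$ be the group presented by (\ref{defrel}) and (\ref{defrel5}); then $W' \twoheadrightarrow E$, so $|W'| \geq \gamma|G|/|N_0|$. On the other hand, the assignments $X \mapsto a$, $Y \mapsto b$, $Z \mapsto d$ and the reverse $a \mapsto X$, $b \mapsto Y$, $d \mapsto Z$ manifestly respect both sets of relations, yielding mutually inverse isomorphisms $W \cong W'$. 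Combining with (\ref{order}) gives $\delta|G|/|N| = |W| \geq \gamma|G|/|N_0|$, i.e.\ $\delta|N_0| \geq \gamma|N|$; since $\delta \mid \gamma$ and $|N_0| \leq |N|$, both inequalities are forced to be equalities, yielding $\delta = \gamma$, $N = N_0$, $|W| = \gamma|G|/|N_0|$, and the claimed presentation.

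The main obstacle is the well-definedness of $\Lambda$ on $T$: without the normal-closure relations of $N_0$, the image $[A^\alpha, B^\mu]$ of $[A,B]$ would not equal $[A,B]$, so $\Lambda$ would destroy the Macdonald relations. The delicate point is the shifted relation at the boundary index $i = \gamma - 1$, whose validity hinges not on the relations defining $N_0$ but on the identity $[A^{\alpha^\gamma}, B^{\mu^\gamma}] = [A,B]$ holding already in $G$; everything else is bookkeeping in the presentation.
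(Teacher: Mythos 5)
Your proof is correct and follows essentially the same route as the paper's: establish $N_0\subseteq N$ by iterating conjugation by $Z$, build the automorphism of $G/N_0$ from the relations together with $[A^{\alpha^\gamma},B^{\mu^\gamma}]=[A,B]$, invoke Theorem~\ref{z} to produce the cyclic extension $E$, and force $\delta=\gamma$ and $N=N_0$ by comparing with (\ref{order}) using $\delta\mid\gamma$ and $N_0\subseteq N$. The only cosmetic difference is that you interpose the abstractly presented group $W'$ and exhibit mutually inverse maps $W\leftrightarrow W'$, which makes the presentation claim slightly more explicit than the paper's direct epimorphism $W\to E$, but the substance is identical.
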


\begin{proof} Conjugation by $Z$ is an automorphism, say $\Lambda$, of $W$. Since $Z^\gamma=[X,Y]$,
we see that $\Lambda$ fixes $[X,Y]$. But $X^\Lambda=X^\alpha$ and $Y^\Lambda=Y^\mu$, so repeated application of $\Lambda$ yields
$$
[X,Y]=[X^\alpha,Y^\mu]=[X^{\alpha^2},Y^{\mu^2}]=[X^{\alpha^3},Y^{\mu^3}]=\dots.
$$
This proves that $N_0$ is contained in $N$.

Consider the group $H=G/N_0$, the canonical projection $\pi:G\to H$, and set $a=A^\pi$, $b=B^\pi$. Then $H$
is generated by $a$ and $b$, with defining relations (\ref{defrel}).

From $[A^{\alpha^\gamma},B^{\mu^\gamma}]=[A,B]$, we deduce
$[a^{\alpha^\gamma},b^{\mu^\gamma}]=[a,b]$. The relations (\ref{defrel}) together with 
$[a^{\alpha^\gamma},b^{\mu^\gamma}]=[a,b]$ show that the assignment
\begin{equation}
\label{ass}
a\mapsto a^\alpha, b\mapsto b^\mu
\end{equation}
preserves all defining relations of $H$. Thus (\ref{ass}) 
extends to an endomorphism, say $\Pi$, of $H$. Since $\gcd(\alpha,o(a))=1=\gcd(\beta,o(b))$, it follows that $\Pi$
is an automorphism. As $[a^{\alpha},b^{\mu}]=[a,b]$, we see that $\Pi$ fixes $[a,b]$,
while the relations $a^{[a,b]}=a^{\alpha^\gamma}, 
{}^{[a,b]} b=b^{\beta^\gamma}$ show that $\Pi^\gamma$ is conjugation by $[a,b]$.
By Theorem~\ref{z}, there is a group $E=\langle a,b,d\rangle$ such that $\langle a,b\rangle$ is normal in $E$; $E/\langle a,b\rangle$
is cyclic of order $\gamma$; $d$ has order $\gamma$ modulo 
$\langle a,b\rangle$; and (\ref{defrel5}) holds. Clearly, the assignment
$$
X\mapsto a, Y\mapsto b, Z\mapsto d,
$$
extends to a group epimorphism $f:W\to E$. It follows that 
$
\gamma|G|/|N_0|\text{ is a factor of }|W|.
$
Thus by~(\ref{order}),
$
\gamma |G|/|N_0|\text{ is a factor of }
\delta |G|/|N|.
$
This implies that
$
\gamma |N| \text{ is a factor of }\delta |N_0|.
$
But we already know that 
$
\delta\mid\gamma\text{ and } |N_0|\mid|N|,
$
so
$\gamma=\delta,N_0=N$, and $f$ is an isomorphism.
\end{proof}

By \cite[Section 4]{M}, the prime factors of $|G|$ agree with those of $(\alpha^\gamma-1)(\beta^\gamma-1)$.
Thus, as a consequence of Theorem \ref{one}, every prime factor of $|W|$ must be a factor of $(\alpha^\gamma-1)(\beta^\gamma-1)\gamma$.

%{\color{red}  We wish to find presentations the Sylow subgroups of $W$, as well as their order,
%nilpotency class, and other important invariants.}

If $p\in\N$  is a prime that is not a factor of $(\alpha^\gamma-1)(\beta^\gamma-1)$, then writing
$\gamma=p^n k$, with $n\geq 0$ and $p\nmid k$, we see that a Sylow $p$-subgroup of $W$ is simply $\langle Z^k\rangle$ and has order $p^n$.

%\noindent{\bf Question 1.} Can we just define $N_0(\alpha,\beta,\gamma)$  as the normal closure 
%of $[A^\alpha,B^\mu][A,B]^{-1}$? I doubt this is the case. Reducing everything to the Sylow $p$-subgroups,
%the smallest amount of generators required could be useful. See below.
 
The group $G$ is not only finite but also nilpotent \cite{M,Sz}. Let $S$ be
the set of all primes $p\in\N$ that are factors of $(\alpha^\gamma-1)(\beta^\gamma-1)$. 
For $p\in S$, let $G_p$ and $N_p$ stand the Sylow $p$-subgroups of $G$ and $N$, respectively.
Then
$$
G=\underset{p\in S}\Pi G_p,\; N_p=G_p\cap N,\; N=\underset{p\in S}\Pi N_p,
$$
Likewise, for $p\in S$, we let $\langle X,Y\rangle_p$ 
be the Sylow $p$-subgroup of $\langle X,Y\rangle$, so that
$$
\langle X,Y\rangle=\underset{p\in S}\Pi \langle X,Y\rangle_p.
$$
Fix $p\in S$. We proceed to find presentations for $G_p$, $G_p/N_p$, and a Sylow $p$-subgroup $W_p$ of $W$.

We have $\gamma=p^n k$, where $n\geq 0$ and $p\nmid k$. Then $\gamma^k$ generates the Sylow $p$-subgroup of $\langle \gamma\rangle$.
It follows from Theorem \ref{one} that $\langle X,Y\rangle_p\langle Z^k\rangle$ is a Sylow $p$-subgroup of $W$.
Let $\eta: \langle X,Y\rangle\to \langle X,Y\rangle_p$ be the canonical projection, and set $x=X^\eta$ and $y=Y^\eta$.
Then
\begin{equation}
\label{xzk}
x^{Z^k}=x^{\alpha^k},\; {}^{Z^k} y=y^{\beta^k}. 
\end{equation}
As $Z^\gamma=[X,Y]$, we have $Z^\gamma=[x,y]w$, where $w$ belongs the the product of the remaining Sylow subgroups of 
$\langle X,Y\rangle$. Let $q=|G/G_p|$, which is relatively prime to $p$, so
\begin{equation}
\label{xzk2}
Z^{q\gamma}=[x,y]^q.
\end{equation}

On the other hand, we have the canonical projection $\pi:G\to G_p$, and we set $a=A^\pi$, $b=B^\pi$, and $c=[a,b]$.
By \cite[Corollary 5.2]{MS}, $G_p$ is generated by $a,b$ subject to the defining relations
\begin{equation}
\label{defrelgp}
a^{[a,b]}=a^{\alpha^\gamma},\; b^{[b,a]}=b^{\beta^\gamma},\; a^u=1,\; b^v=1,
\end{equation}
where $u$ and $v$ are the orders of $a$ and $b$, respectively. The relation $[a^{\alpha^\gamma},b^{\mu^\gamma}]=[a,b]$ is valid in $G_p$
and is inherited from $G$. It follows from Theorem \ref{one} that $N_p$ is the normal closure in $G_p$ 
of the following $\gamma-1$ elements:
\begin{equation}
\label{robin}
[a^\alpha,b^\mu][a,b]^{-1},\dots, [a^{\alpha^{\gamma-1}},b^{\mu^{\gamma-1}}][a,b]^{-1}.
\end{equation}
Thus, setting $a_0=a N_p\in G_p/N_p$ and $b_0=b N_p\in G_p/N_p$, we see that $G_p/N_p$
is generated by $a_0$ and~$b_0$, subject to the defining relations
\begin{equation}
\label{defrelgpnp}
a_0^{[a_0,b_0]}=a_0^{\alpha^\gamma},\; 
b_0^{[b_0,a_0]}=b_0^{\beta^\gamma},\; a_0^u=1,\; b_0^v=1,
[a_0^\alpha,b_0^\mu]=[a_0,b_0],\dots, 
[a_0^{\alpha^{\gamma-1}},b_0^{\mu^{\gamma-1}}]=[a_0,b_0].
\end{equation}
Moreover, the relation $[a_0^{\alpha^\gamma},b_0^{\mu^\gamma}]=[a_0,b_0]$ is valid in $G_p/N_p$. 

The defining relations of $G_p/N_p$ together with 
$[a_0^{\alpha^\gamma},b_0^{\mu^\gamma}]=[a_0,b_0]$ show that the assignment
\begin{equation}
\label{assp}
a_0\mapsto a_0^\alpha, b_0\mapsto b_0^\mu
\end{equation}
preserves all defining relations of $G_p/N_p$. Thus (\ref{assp}) 
extends to an endomorphism, say $\xi$, of $G_p/N_p$. It is an automorphism since $\gcd(\alpha,o(a_0))=1=\gcd(\beta,o(b_0))$.
Thus $\chi=\xi^{q k}$ is also an automorphism of $G_p/N_p$. Here $\chi^{p^n}$ is
conjugation by $[a_0,b_0]^q$ by (\ref{defrelgpnp})  and (\ref{assp}). Moreover, $\chi$ fixes $[a_0,b_0]$ and hence $[a_0,b_0]^q$, because
$
[a_0^{\alpha^{qk}},b_0^{\mu^{qk}}]=[a_0,b_0]
$
follows by applying $qk$ times the automorphism~$\xi$ to 
$
[a_0^\alpha,b_0^\mu]=[a_0,b_0].
$
Thus, by Theorem \ref{z}, there is a group $E=\langle a_0,b_0,d_0\rangle$ such that $\langle a_0,b_0\rangle$ is normal in $E$; $E/\langle a_0,b_0\rangle$
is cyclic of order $p^n$; $d_0$ has order $p^n$ modulo 
$\langle a_0,b_0\rangle$; and 
\begin{equation}
\label{rele}
a_0^{d_0}=a_0^{\alpha^{qk}},\; b_0^{d_0}=b_0^{\mu^{qk}},\; d_0^{p^n}=[a_0,b_0]^q. 
\end{equation}

The homomorphism (\ref{omega}) yields an isomorphism
$
\Delta:G_p/N_p\to \langle X,Y\rangle_p,
$
given by
$
a_0\mapsto x,\; b_0\mapsto y.
$
It follows from (\ref{xzk}), (\ref{xzk2}), and (\ref{rele}) that $\Delta$
extends to a homomorphism $E\to W_p$ via $d_0\mapsto Z^{qk}$. As $|E|=|G_p/N_p|p^n=|\langle X,Y\rangle_p|p^n=|W_p|$,
we see that $\Delta$ is an isomorphism. The given presentation of $G_p/N_p$ can now be easily
adapted to yield a presentation of $E\cong W_p$. 

\begin{theorem}\label{dos} Let $p\in\N$ be a prime factor of $(\alpha^\gamma-1)(\beta^\gamma-1)$, where
$\gamma=p^n k$, $n\geq 0$, $p\nmid k$. Then the Sylow $p$-subgroup $G_p$ of $G$ is generated
by elements $a$ and $b$, subject to the defining relations~(\ref{defrelgp}), where
$u$ and $v$ are the orders of $a$ and  $b$ in $G_p$. Moreover, $N_p$ is the normal closure
in $G_p$ of the $\gamma-1$ elements (\ref{robin}); $G_p/N_p$ is generated
by elements $a_0$ and~$b_0$, subject to the defining relations~(\ref{defrelgpnp}), 
where $\mu$ is inverse of $\beta$ modulo the order of $b_0$; and $G_p/N_p$ imbeds as a normal subgroup of 
a Sylow $p$-subgroup $W_p$ of $W$.
Furthermore, $W_p$ is generated by elements 
$a_0,b_0,d_0$ subject to defining relations~(\ref{defrelgpnp}) and (\ref{rele}), where 
$q$ is the order of $G/G_p$,  and $W/\langle a_0,b_0\rangle$ cyclic 
of order $p^n$. Thus
$$
|W_p|=p^n |G_p|/|N_p|.
$$
In particular, if $p\nmid \gamma$, then $G_p/N_p$ is isomorphic to the sole Sylow $p$-subgroup of $W$.
\end{theorem}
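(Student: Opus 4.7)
The plan is to harvest the ingredients developed in the lead-up to the theorem and package them into a proof. The presentation (\ref{defrelgp}) of $G_p$ is imported directly from \cite[Corollary 5.2]{MS}. Since $G$ is the internal direct product of its Sylow subgroups, and $N$ is the normal closure in $G$ of (\ref{normalclo}) by Theorem \ref{one}, applying the canonical projection $\pi: G \to G_p$ identifies $N_p = N \cap G_p$ as the normal closure in $G_p$ of the list (\ref{robin}). Adjoining these to (\ref{defrelgp}) as additional relators then yields the presentation (\ref{defrelgpnp}) of $G_p/N_p$.

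For $W_p$, I would mimic the strategy used to prove Theorem \ref{one}. First, verify that the assignment (\ref{assp}) preserves every relator in (\ref{defrelgpnp}); the crucial identity $[a_0^{\alpha^{\gamma}}, b_0^{\mu^{\gamma}}] = [a_0, b_0]$, inherited from $G$, is exactly what ensures the conjugation relations survive the substitution. This produces an endomorphism $\xi$, which is automatically an automorphism because $\alpha$ and $\beta$ are coprime to the orders of $a_0$ and $b_0$. Setting $\chi = \xi^{qk}$, I would then check the two hypotheses of Theorem \ref{z}: that $\chi$ fixes $[a_0,b_0]^q$, which follows by iterating $\xi$ on $[a_0^\alpha, b_0^\mu] = [a_0, b_0]$, and that $\chi^{p^n}$ coincides with conjugation by $[a_0,b_0]^q$, which reduces to direct substitution into (\ref{defrelgpnp}). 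Theorem \ref{z} then supplies an extension $E = \langle a_0, b_0, d_0\rangle$ in which the relations (\ref{rele}) hold and $E/\langle a_0, b_0\rangle$ is cyclic of order $p^n$.

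The final step is to identify $E$ with a Sylow $p$-subgroup of $W$. The homomorphism $\Omega$ restricts to an isomorphism $\Delta: G_p/N_p \to \langle X, Y\rangle_p$; using (\ref{xzk}) and (\ref{xzk2}), the assignment $d_0 \mapsto Z^{qk}$ extends $\Delta$ to a surjective homomorphism $E \to \langle X,Y\rangle_p \langle Z^k\rangle$, whose codomain is a Sylow $p$-subgroup $W_p$ by the discussion immediately preceding the theorem. A comparison of orders via (\ref{order}) then forces this surjection to be an isomorphism, delivering both the required presentation of $W_p$ and the formula $|W_p| = p^n |G_p|/|N_p|$. The case $p \nmid \gamma$ collapses to $W_p \cong G_p/N_p$ because $n = 0$.

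The step most likely to require care is the verification that (\ref{assp}) descends to a well-defined endomorphism of $G_p/N_p$: each of the mixed commutator relations in (\ref{defrelgpnp}) must be checked to remain valid after substitution, and this is where the inherited identity $[a_0^{\alpha^\gamma}, b_0^{\mu^\gamma}] = [a_0, b_0]$ is indispensable. Everything else is essentially bookkeeping driven by Theorem \ref{z} and the extension construction of Theorem \ref{one}.
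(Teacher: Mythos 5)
Your proposal follows the paper's argument essentially verbatim: the presentation of $G_p$ from \cite[Corollary 5.2]{MS}, the identification of $N_p$ via projection of Theorem \ref{one}, the automorphism $\xi$ induced by (\ref{assp}), the passage to $\chi=\xi^{qk}$ and Theorem \ref{z}, and the final order count showing $E\cong W_p$ are exactly the steps the paper carries out in the discussion preceding the theorem. The argument is correct; the only detail left implicit is that when $p\nmid\gamma$ the Sylow $p$-subgroup $\langle X,Y\rangle_p$ is characteristic in the normal subgroup $\langle X,Y\rangle$, hence the \emph{sole} Sylow $p$-subgroup of $W$.
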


\begin{theorem}\label{tres} Let $\mu$ be an integer that is inverse of $\beta$ modulo the order of $b$ and write $\gamma=p^n k$,
where $n\geq 0$ and $p\nmid k$. Suppose that $p|(\alpha-1)$ and $p|(\beta-1)$. Let $M_p$ be
 the normal closure in $G_p$ of the $p^n-1$ elements
$
[a^\alpha,b^\mu][a,b]^{-1},\dots, [a^{\alpha^{p^n-1}},b^{\mu^{p^n-1}}][a,b]^{-1}.
$
Then $M_p=N_p$. In particular, if  $p\nmid \gamma$, then $G_p$ imbeds as the sole Sylow $p$-subgroup of $W$.
\end{theorem}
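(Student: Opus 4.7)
The inclusion $M_p\subseteq N_p$ is immediate from the definitions. For the reverse inclusion, I would pass to the quotient $\bar G=G_p/M_p$ and aim to show that $c_i:=[a^{\alpha^i},b^{\mu^i}][a,b]^{-1}$ vanishes in $\bar G$ for every $1\le i\le \gamma-1$; by construction, $c_i=1$ in $\bar G$ for $1\le i\le p^n-1$.

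The engine is a candidate endomorphism $\phi$ of $\bar G$ defined on generators by $a\mapsto a^\alpha$, $b\mapsto b^\mu$, modelled on the construction of $\xi$ preceding Theorem \ref{dos}. A direct calculation gives $\phi(c_j)=c_{j+1}c_1^{-1}$, which reduces to $c_{j+1}$ in $\bar G$ when $c_1=1$ (automatic for $n\ge 1$). If $\phi$ is a well-defined endomorphism, then iterating it from the relation $c_0=1$ propagates triviality to every $c_j$, finishing the argument. Checking the relations of $\bar G$ shows that $\phi$ preserves $a^u=b^v=1$ (since $\gcd(\alpha,u)=\gcd(\mu,v)=1$), preserves $a^{[a,b]}=a^{\alpha^\gamma}$ and $b^{[b,a]}=b^{\beta^\gamma}$ (using $[a^\alpha,b^\mu]=[a,b]$ in $\bar G$), and sends each relation $c_j=1$ with $1\le j\le p^n-2$ to a later relation already on the list. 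The single non-automatic requirement is that the image of $c_{p^n-1}=1$ hold, namely $c_{p^n}=1$ in $\bar G$.

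The crux is therefore to establish $c_{p^n}=1$ in $\bar G$; I would in fact prove the stronger statement $c_{p^n}=1$ in $G_p$ itself. By the lifting-the-exponent lemma (with the standard modification when $p=2$), the hypothesis $p\mid\alpha-1$ gives
\[
v_p(\alpha^{p^n}-1)=v_p(\alpha-1)+n=v_p(\alpha^\gamma-1),
\]
and symmetrically $v_p(\mu^{p^n}-1)=v_p(\beta^\gamma-1)$. The presentation of $G_p$ in \cite[Corollary 5.2]{MS} together with the order information in \cite[Section 4]{M} forces $u=o(a)$ and $v=o(b)$ to be $p$-powers bounded by $p^{v_p(\alpha^\gamma-1)}$ and $p^{v_p(\beta^\gamma-1)}$ respectively, so that $o(a)\mid\alpha^{p^n}-1$ and $o(b)\mid\mu^{p^n}-1$. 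This forces $a^{\alpha^{p^n}}=a$ and $b^{\mu^{p^n}}=b$, whence $[a^{\alpha^{p^n}},b^{\mu^{p^n}}]=[a,b]$, i.e., $c_{p^n}=1$ in $G_p$. This order estimate is the main obstacle; everything else is formal manipulation of the presentation.

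With $c_{p^n}=1$ in $\bar G$ secured, $\phi$ is a well-defined endomorphism and, since $\alpha,\mu$ are coprime to $u,v$, an automorphism; iterating it on $c_0=1$ gives $c_j=1$ in $\bar G$ for every $j$, so $N_p\subseteq M_p$ and equality holds. The ``in particular'' assertion is the specialization $n=0$: then $p^n=1$, $M_p=1$ and $\bar G=G_p$, while the same order estimate already forces $a^\alpha=a$ and $b^\mu=b$ in $G_p$, so $c_j=1$ in $G_p$ for every $j$. Hence $N_p=1$ and, by Theorem \ref{dos}, $G_p$ embeds as the unique Sylow $p$-subgroup of $W$.
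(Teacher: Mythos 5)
Your overall architecture matches the paper's: show that the extra relation $[a^{\alpha^{p^n}},b^{\mu^{p^n}}]=[a,b]$ already holds in $G_p$, then run the ``$a\mapsto a^\alpha$, $b\mapsto b^\mu$ is an automorphism'' propagation exactly as in the construction preceding Theorem \ref{dos}. That framework is fine. The problem is the step you yourself identify as the crux. Your claim that $u=o(a)$ and $v=o(b)$ are bounded by $p^{v_p(\alpha^\gamma-1)}$ is false, and with it the deductions $a^{\alpha^{p^n}}=a$ and $b^{\mu^{p^n}}=b$. Already in the case $\alpha=\beta$ with $p$ odd the paper records $o(a)=o(b)=p^{3m}$ where $m=v_p(\alpha^\gamma-1)$ (see the discussion around (\ref{relgp}); \cite[Section 4]{M} gives no such upper bound, only that the orders are nontrivial $p$-powers). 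By your own lifting-the-exponent computation $v_p(\alpha^{p^n}-1)=m$, so $o(a)=p^{3m}\nmid \alpha^{p^n}-1$ whenever $m>0$, and $a^{\alpha^{p^n}}\neq a$ in general. The same failure kills your treatment of the $n=0$ case, where you assert $a^\alpha=a$ and $b^\mu=b$ in $G_p$.

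The identity $[a^{\alpha^{p^n}},b^{\mu^{p^n}}]=[a,b]$ is nevertheless true, but for a different reason, and this is the idea your proof is missing: it comes from an \emph{inner} automorphism, not from the exponents acting trivially. Let $r=p^{r_0}$ and $s=p^{s_0}$ be the multiplicative orders of $\alpha$ modulo $u$ and of $\beta$ modulo $v$ (these are $p$-powers because $\alpha\equiv\beta\equiv 1\bmod p$), and choose $i$ with $ik\equiv 1\bmod p^{\max\{r_0,s_0\}}$. Then $\gamma i\equiv p^n$ modulo both $r$ and $s$, so conjugation by $c^i$ sends $a\mapsto a^{\alpha^{\gamma i}}=a^{\alpha^{p^n}}$ and $b\mapsto b^{\mu^{\gamma i}}=b^{\mu^{p^n}}$, while fixing $c=[a,b]$; hence $[a,b]=[a,b]^{c^i}=[a^{c^i},b^{c^i}]=[a^{\alpha^{p^n}},b^{\mu^{p^n}}]$. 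Substituting this argument for your order estimate repairs the proof, including the $n=0$ specialization (where it yields $[a^\alpha,b^\mu]=[a,b]$ in $G_p$ directly, so $N_p=1$); the remainder of your write-up is then essentially the paper's argument.
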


\begin{proof} Let $u$ and $v$ be the orders of $a$ and $b$, respectively, so that
$u=p^{u_0}$ and $v=p^{v_0}$ for some $u_0,v_0\geq 0$ (in fact, $u_0,v_0\geq 1$, as indicated in
\cite[pp. 602-603]{M}). Let $r$ and $s$ be the orders of $\alpha$ and $\beta$ modulo $u$ and $v$,
respectively. As $\alpha\equiv 1\equiv \beta\mod p$, it follows that $r=p^{r_0}$ and $s=p^{s_0}$
for some $r_0,s_0\geq 0$. Let $t=\max\{r_0,s_0\}$. Since $p\nmid k$, there is $i\in\Z$ such that
$ik\equiv 1\mod p^t$, so
$$
\gamma i\equiv p^n k i\equiv p^n\mod r,\; \gamma i\equiv p^n k i\equiv p^n\mod s.
$$
The definitions of $r$ and $s$ then yield
$$
\alpha^{\gamma i}=\alpha^{p^n}\mod u,\; \beta^{\gamma i}=\beta^{p^n}\mod v.
$$
As $\beta\mu\equiv 1\mod v$, we also have
$$
\mu^{\gamma i}=\mu^{p^n}\mod v.
$$
Thus
$$
a^{c_i}=a^{\alpha^{\gamma i}}=a^{\alpha^{p^n}},\; b^{c_i}=b^{\mu^{\gamma i}}=b^{\mu^{p^n}},
$$
and therefore
\begin{equation}\label{abpn}
[a,b]=[a,b]^{c_i}=[a^{c_i},b^{c_i}]=[a^{\alpha^{p^n}},b^{\mu^{p^n}}].
\end{equation}
A presentation for $G_p$ is given in Theorem \ref{dos}. We may now mimic the proof of Theorem \ref{dos},
with (\ref{abpn}) instead of $[a^{\alpha^{\gamma}},b^{\mu^{\gamma}}]$, to conclude that $M_p=N_p$.
\end{proof}

\section{The case when $\alpha=\beta$}\label{s3}

We keep the notation and hypotheses from Section \ref{s2}, and assume from now on that $\alpha=\beta$.
We fix $p\in S$, let $m=v_p(\alpha^\gamma-1)$, and
set $J=G_p$. Note that $\mu$ is inverse of $\alpha$ modulo the order of~$b$, and hence modulo
the order of $c=[a,b]$ by \cite[pp. 609]{M} or \cite[Proposition 7.2]{MS}. 
%By Case 1 we mean that either $p>3$, or else $p=3$ with $m>1$ or $(\alpha_0-1)/3\equiv 1\mod 3$.
%By Case 2 we understand that $p=2$. By Case 3 we signify that $p=3$, $m=1$, and $(\alpha_0-1)/3\equiv -1\mod 3$.

\begin{prop}\label{adentro} We have $N_p\subseteq Z_3(J)$ in Cases 1 and 2, 
and $N_p\subseteq Z_5(J)$ in Case 3.
\end{prop}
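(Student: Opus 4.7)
The plan is to verify that each normal generator
$\eta_i:=[a^{\alpha^i},b^{\mu^i}]\,c^{-1}$ of $N_p$, for $1\le i\le \gamma-1$, already lies in the prescribed centre. Since $Z_3(J)$ and $Z_5(J)$ are characteristic, hence normal, subgroups of $J$, such a verification on the generators propagates to the normal closure $N_p$.

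The first step is to show that $N_p\subseteq\gamma_3(J)$, where $\gamma_k(J)$ denotes the $k$-th term of the lower central series of $J$. Standard commutator identities yield $[xy,z]\equiv [x,z][y,z]$ and $[x,yz]\equiv [x,y][x,z]\pmod{\gamma_3(J)}$, and repeated application gives $[a^n,b^m]\equiv c^{nm}\pmod{\gamma_3(J)}$ for every pair of integers $n,m$. Specialising to $n=\alpha^i$, $m=\mu^i$ and recalling that $\mu$ is inverse to $\alpha$ modulo $o(c)$ (as noted at the beginning of Section~\ref{s3}), one obtains $\eta_i\equiv c^{\alpha^i\mu^i-1}=1\pmod{\gamma_3(J)}$, so $\eta_i\in\gamma_3(J)$.

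For the second step I invoke the lower-central-series description of $J=G_p$ provided by \cite[Theorem~8.1]{MS}, reproved via Witt's formula in Section~\ref{Witt} for Cases~1 and~2, and developed in Section~\ref{s6} for Case~3. These inputs give that $J$ is nilpotent of class at most $5$ in Cases~1 and~2, and of class exactly $7$ in Case~3. Combined with the elementary inclusion $\gamma_{c-k+1}(J)\subseteq Z_k(J)$ valid in any nilpotent group of class $c$, one deduces $\gamma_3(J)\subseteq Z_3(J)$ in the first two cases (since $c-2\le 3$ forces $\gamma_3\subseteq\gamma_{c-2}\subseteq Z_3$) and $\gamma_3(J)\subseteq Z_5(J)$ in Case~3 (since $c=7$ gives $c-4=3$, so $\gamma_3=\gamma_{c-4}\subseteq Z_5$). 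Together with Step 1, this proves the proposition.

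The main obstacle is not the manipulation at the third-commutator level, which is uniform and short, but rather confirming the exact class bound in each case; this is absorbed into \cite[Theorem~8.1]{MS} and Section~\ref{s6}. Granted those structural inputs, everything else reduces to the identity $\alpha^i\mu^i\equiv 1\pmod{o(c)}$ and routine commutator bookkeeping.
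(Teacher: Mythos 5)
Your proof is correct, and it reaches the conclusion by a slightly different route than the paper. The shared core is the same: since $Z_3(J)$ and $Z_5(J)$ are normal, it suffices to treat the normal generators $[a^{\alpha^i},b^{\mu^i}][a,b]^{-1}$ of $N_p$, and the key computation is the bilinearity of commutators modulo a suitable central term together with $\alpha\mu\equiv 1\bmod o(c)$, which kills the exponent $\alpha^i\mu^i-1$ of $c$. The difference lies in which structural input from \cite{MS} is invoked to place the resulting congruence inside the right term of the upper central series. The paper reads off from \cite[Theorem 8.1 and Proposition 8.2]{MS} that $c$ is central modulo $Z_3(J)$ in Cases 1 and 2 and modulo $Z_5(J)$ in Case 3, so that $[a^i,b^j]\equiv[a,b]^{ij}$ already holds modulo the relevant $Z_k(J)$ and the generators land there directly. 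You instead establish the sharper, case-independent fact $N_p\subseteq\gamma_3(J)$ and then descend to the upper central series via the nilpotency class bounds ($c\le 5$ in Cases 1 and 2, $c\le 7$ in Case 3) and the standard inclusion $\gamma_i(J)\subseteq Z_{c-i+1}(J)$. Both arguments ultimately rest on the structure theory of $J$ from \cite{MS} (reproved in Sections \ref{Witt} and \ref{s6}); yours trades the explicit description of the centres for the coarser class bound, which is an adequate input here and has the minor advantage of isolating the intermediate statement $N_p\subseteq\gamma_3(J)$, valid uniformly in all three cases.
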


\begin{proof} It follows from \cite[Theorem 8.1 and Proposition 8.2]{MS} that $c$ is in the center of $J$ modulo $Z_3(J)$
in Cases 1 and~2, and modulo $Z_5(J)$ in Case 3. Thus, for all integers $i,j$, we have
$$
[a^i,b^j]\equiv [a,b]^{ij}\mod Z_3(J)\text{ in Cases 1 and 2, and } [a^i,b^j]\equiv [a,b]^{ij}\mod Z_5(J)\text{ in Case 3}.
$$
Since $\alpha\mu$ is congruent to 1 modulo the order of $c$, all $[a^{\alpha^i},b^{\mu^i}][a,b]^{-1}$, with $i\geq 1$, are in 
the normal subgroup $Z_3(J)$ of $J$ in Cases 1 and 2, and in the normal subgroup $Z_5(J)$ of $J$ in Case~3. This proves
the stated inclusions by Theorem \ref{one}. %Moreover, in Cases 1 and 2, $Z_3(J)$ is abelian by 
%\cite[Propositions 9.1 and 9.2]{MS}.
\end{proof}

%From \cite{MS}, we know $|G_p|$ as well as $u,v,q$, as defined in Section \ref{s2}. 

By assumption $p\in S$, so $m=v_p(\alpha^\gamma-1)>0$. We henceforth fix the following notation:
$$
\gamma=p^n k,\; \alpha=1+p^h t,\; \alpha^\gamma=1+p^m\ell,\; q=|G/G_p|,
$$
where $n\geq 0$, $p\nmid k$, $h\geq 0$, $p\nmid t$,  and $p\nmid \ell$. As seen in Section \ref{s2},
$N_p$ is the normal closure in $J$ of 
\begin{equation}
\label{defvi}
V_i=[a^{\alpha^i},b^{\mu^i}][a,b]^{-1},\quad i\geq 1.
\end{equation}
The following elements of $J$ play an important role in the determination of $N_p$ in Cases 1 and 2:
\begin{equation}
\label{deffi}
F_i=a^{-p^{e(m)}(\alpha^i-1)} b^{p^{e(m)}(\mu^i-1)}=a^{-p^{e(m)}(\alpha-1)u_i} b^{p^{e(m)}(\mu-1)v_i},\quad i\geq 1,
\end{equation}
where $e(m)=m$ if $p$ is odd and $e(m)=m-1$ if $p=2$, and
\begin{equation}
\label{defui}
u_i=1+\alpha+\cdots+\alpha^{i-1},\; v_i=1+\mu+\cdots+\mu^{i-1},\; w_i=v_i-u_i,\quad i\geq 1,
\end{equation}
noting that
$$
u_1=1=v_1,\; w_1=0,\; u_2=1+\alpha,\; v_2=1+\mu,\; w_2=\mu-\alpha.
$$
By \cite[Propositions 9.1 and 9.2]{MS}, $Z_3(J)$ is abelian in Case 1 as well as in Case 2 when $m>1$.
Thus, in both cases, we have
$$
F_i=a^{-p^{e(m)}(\alpha-1)u_i} b^{p^{e(m)}(\mu-1)u_i}b^{p^{e(m)}(\mu-1)w_i}=
a^{-p^{e(m)}(\alpha-1)v_i} b^{p^{e(m)}(\mu-1)v_i}a^{p^{e(m)}(\alpha-1)w_i},\quad i\geq 1,
$$
and therefore
\begin{equation}
\label{vui}
F_1^{-v_i} F_i=a^{p^{e(m)}(\alpha-1)w_i},\; F_1^{-u_i} F_i=b^{p^{e(m)}(\mu-1)w_i},
\end{equation}
whence
\begin{equation}
\label{vui2}
F_i\in \langle F_1, a^{p^{e(m)}(\alpha-1)w_i}\rangle,\; F_i\in \langle F_1, b^{p^{e(m)}(\alpha-1)w_i}\rangle,\quad i\geq 1.
\end{equation}

In the calculation of (\ref{defvi}), we will also make use of
\[
\phi(i) = \frac{(i-1)i}{2},
\qquad\varphi(i) = \frac{i(i-1)(i-2)}{6},\quad i\in\Z.
\]

\section{Case 1 when $\alpha=\beta$}\label{s4}

We keep the notation and hypotheses from Sections \ref{s2} and \ref{s3}, and assume further 
that $p$ is odd and that if $p=3$, then $\alpha^\gamma\not\equiv 7\mod 9$ or,
alternatively, that if $(p,m)=(3,1)$, then $(\alpha^\gamma-1)/3\not\equiv -1\mod 3$.
As $p$ is odd,  the map $x\mapsto x^2$ is an automorphism of any abelian subgroup of $J$,
with inverse automorphism $x\mapsto x^{1/2}$ of square  $x\mapsto x^{1/4}$.
In this section we describe $N_p, G_p/N_p$, and $W_p$.

%Recall from \cite[Theorem 5.3]{MS} that $J$ has defining relations
%\begin{equation}
%\label{sp1}
%a^{[a,b]}=a^{\alpha^\gamma},\, b^{[b,a]}=b^{\alpha^\gamma}, a^{p^{3m}}=1, b^{p^{3m}}=1\text{ in Case 1},
%\end{equation}
%\begin{equation}
%\label{sp3}
%a^{[a,b]}=a^{\alpha^\gamma},\, b^{[b,a]}=b^{\alpha^\gamma}, a^{81}=1, b^{81}=1\text{ in Case 3}.
%\end{equation}
%In Case 1, by adding $p^{2m}e$ to $\ell$ for a suitable $e\in\Z$, if necessary, we may assume that $\ell\equiv 0\mod 4$.
%The isomorphism type of $J$ is clearly invariant under this transformation.
%We may just take $e=-\ell$; since $p^{2m}\equiv 1\mod 4$, we get that $\ell-p^{2m}\ell=\ell(1-p^{2m})\equiv 0\mod 4$.
%In Case 3, by adding $27e$ to $\ell$ for a suitable $e\in\Z$, if necessary, we may assume that $\ell\equiv 0\mod 4$.
%The isomorphism type of $J$ is clearly invariant under this transformation.
%We may just take $e=\ell$; since $27\equiv -1\mod 4$, we get that $\ell+27\ell=28\ell\equiv 0\mod 4$.

In relation to (\ref{defrelgp}), \cite[Theorem 7.1 and Proposition 7.2]{MS} ensure that $o(a)=o(b)=p^{3m}$,
$o(c)=p^{2m}$, and $|J|=p^{7m}$. Thus $J$ is generated by  $a$, $b$, and $c$ subject to the defining relations
\begin{equation}
\label{relgp}
c=[a,b], a^{c}=a^{\alpha^\gamma},\, {}^{c} b=b^{\alpha^\gamma}, a^{p^{3m}}=1, b^{p^{3m}}=1.
\end{equation}
Notice the existence of the automorphism $a\leftrightarrow b$, $c\leftrightarrow c^{-1}$ of $J$.

\begin{theorem}\label{thm.commutators.J}
    For all $i,j\in\Z$ the following commutator formulas hold in $J$:
    \begin{align}
        &[c^i,a^j]
        = a^{-p^m\ell ij - p^{2m}\ell^2\phi(i)j},\label{eq.comj.2}\\
        &[c^i,b^j]
        = b^{p^m\ell ij - p^{2m}\ell^2\phi(i+1)j},\label{eq.comj.3}\\
        &[a^i,b^j]
        = a^{-p^m\ell\phi(i)j} b^{p^m\ell i\phi(j)} c^{ij - p^m\ell\phi(i)\phi(j)} a^{\xi(i,j)},\label{eq.comj.4}
    \end{align}
    where 
    \[
    \xi(i,j)
    = (p^{2m}\ell^2/2)\{2\varphi(i+1)j + (2i-7)\phi(i)\phi(j) - 2i\phi(j) - (3i+1)i\varphi(j)-2\delta_{p,3}3^{m-1}\ell\phi(i)\phi(j)\},
    \]
		with $\delta_{p,3}=1$ if $p=3$ and $\delta_{p,3}=0$ if $p\neq 3$.
		\end{theorem}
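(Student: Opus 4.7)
The plan is to establish the three formulas in order, leveraging the basic commutators $[c,a]$, $[c,b]$, $[a,b]=c$ dictated by (\ref{relgp}), together with the symmetry automorphism $a\leftrightarrow b$, $c\mapsto c^{-1}$ of $J$.

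For formula (\ref{eq.comj.2}), I read off $[c,a]=a^{-p^m\ell}$ directly from $a^c=a^{\alpha^\gamma}$, and since this is a power of $a$, I induct on $i$ via $[c^i,a]=[c,a]^{c^{i-1}}[c^{i-1},a]$, using the expansion $\alpha^{\gamma(i-1)}\equiv 1+(i-1)p^m\ell\pmod{p^{2m}}$ together with the telescoping identity $\phi(i)-\phi(i-1)=i-1$. Extending to $a^j$ is immediate, since $[c^i,a]$ is a power of $a$ and therefore commutes with $a^j$, giving $[c^i,a^j]=[c^i,a]^j$. Formula (\ref{eq.comj.3}) is then obtained by applying the symmetry automorphism to (\ref{eq.comj.2}) and substituting $i\mapsto -i$, recognizing $\phi(-i)=i(i+1)/2=\phi(i+1)$.

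Formula (\ref{eq.comj.4}) carries the bulk of the work, and I would prove it by a double induction. First, holding $i=1$ fixed, I induct on $j$ via $[a,b^j]=c\cdot[a,b^{j-1}]^b$, using $c^b=c[c,b]$ and $a^b=ac$, and reducing after each step to the normal form $a^{*}b^{*}c^{*}a^{*}$ by sliding $b$-factors past $c$ via $c^jb^n=b^{n\alpha^{j\gamma}}c^j$ and then applying (\ref{eq.comj.2})--(\ref{eq.comj.3}) to handle conjugation by $a$ and $b$ on the surviving $c$-factors. Second, I iterate $[a^i,b^j]=[a,b^j]^{a^{i-1}}[a^{i-1},b^j]$ to induct on $i$. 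The leading exponents $-p^m\ell\phi(i)j$, $p^m\ell\,i\,\phi(j)$, and $ij-p^m\ell\phi(i)\phi(j)$ arise from single telescoping sums of the first-order corrections introduced at each stage, while the cubic corrections $\varphi(i)$ and $\varphi(j)$ arise from second-order telescoping of the $\phi$-terms.

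The main obstacle is the bookkeeping inside $\xi(i,j)$: the combination of $2\varphi(i+1)j$, $(2i-7)\phi(i)\phi(j)$, $-2i\phi(j)$, and $-(3i+1)i\,\varphi(j)$ makes the double induction error-prone. The term $-2\delta_{p,3}\,3^{m-1}\ell\,\phi(i)\phi(j)$ is especially delicate, as it reflects an additional commutator contribution visible only when $p=3$, arising from the non-triviality of a single bracket in $Z_3(J)$ in the corresponding subcase of Case~1. I would isolate the bulk of $\xi$ by first working modulo a deeper term of the lower central series of $J$ to pin down the other contributions, then introduce the $p=3$ correction by a separate bounded calculation, using that $Z_3(J)$ is abelian in Case~1 by \cite[Propositions 9.1 and 9.2]{MS} so that the deepest layer can be freely collected. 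As an independent cross-check on the explicit shape of $\xi(i,j)$, these formulas are recorded in \cite{MS2} in Cases~1 and~2, and the Cant--Eick algorithm \cite{CE} applied to the presentation (\ref{relgp}) together with the lower-central-series data for $J$ from \cite[Theorem 8.1]{MS} yields an equivalent mechanical derivation.
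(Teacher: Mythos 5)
Your derivations of \eqref{eq.comj.2} and \eqref{eq.comj.3} are complete and correct: $[c,a]=a^{-p^m\ell}$ from \eqref{relgp}, the induction $[c^i,a]=[c,a]^{c^{i-1}}[c^{i-1},a]$ with $\alpha^{\gamma(i-1)}\equiv 1+(i-1)p^m\ell \bmod p^{2m}$ and $\phi(i)-\phi(i-1)=i-1$, the observation that $[c^i,a]$ is a power of $a$ so that $[c^i,a^j]=[c^i,a]^j$, and the passage to \eqref{eq.comj.3} via the symmetry $a\leftrightarrow b$, $c\mapsto c^{-1}$ together with $\phi(-i)=\phi(i+1)$ all check out. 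Note, however, that your route is not the one the paper takes: the paper simply cites \cite[Theorem 8.1]{MS2} for the whole theorem and then gives an independent derivation in Section~\ref{andrea} by a quite different mechanism, namely building a torsion-free nilpotent cover of $J$ of Hirsch length $7$, computing its structure constants, and extracting $[a^i,b^j]$ from the Hall polynomials produced by the Cant--Eick algorithm \cite{CE}. Your direct commutator-calculus double induction is essentially the \cite{MS2} route. What your approach buys is self-containedness and transparency for \eqref{eq.comj.2}--\eqref{eq.comj.3}; what the paper's Section~\ref{andrea} approach buys is that the error-prone collection of quartic and quintic terms is delegated to a mechanical, uniformly organized computation.

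The gap is in \eqref{eq.comj.4}. The entire nontrivial content of that formula is the exact polynomial $\xi(i,j)$, and your proposal does not compute it: you correctly set up the recursions $[a,b^j]=c\,[a,b^{j-1}]^{b}$ and $[a^i,b^j]=[a,b^j]^{a^{i-1}}[a^{i-1},b^j]$ and correctly predict which telescoping sums produce the leading exponents, but the passage from ``second-order telescoping of the $\phi$-terms'' to the specific coefficients $2\varphi(i+1)j$, $(2i-7)\phi(i)\phi(j)$, $-2i\phi(j)$, $-(3i+1)i\varphi(j)$ is exactly where the work lies, and it is left as a plan plus an appeal to \cite{MS2} and \cite{CE} as ``cross-checks.'' A cross-check against the source you are trying to reprove is not a proof. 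Two further points need care if you execute the plan: (i) the identity $[c^i,a^j]=[c^i,a]^j$ and its analogue for $b$ are only valid because those commutators are pure powers of $a$ (resp.\ $b$); inside the double induction the conjugates $[a,b^{j-1}]^{b}$ and $[a,b^j]^{a^{i-1}}$ involve mixed words, and moving $b$-powers past $c$-powers via $c^jb^n=b^{n\alpha^{j\gamma}}c^j$ injects corrections at depth $p^{2m}$ that must be tracked exactly, using $a^{p^{2m}}b^{p^{2m}}=1$ to normalize; (ii) your explanation of the $\delta_{p,3}$ term is not quite right --- it does not come from an extra nonvanishing bracket in $Z_3(J)$ but from the cubic binomial contribution $\binom{\,\cdot\,}{3}$ in the expansion of $\alpha_0^{\,\cdot}$, whose factor of $3$ in the denominator fails to annihilate a $p^{3m}$-term precisely when $p=3$ (this is visible in the computation of $g_{r\ell}$ in Section~\ref{andrea}). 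Until $\xi(i,j)$ is actually derived, the proposal establishes only \eqref{eq.comj.2} and \eqref{eq.comj.3}.
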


This is shown in \cite[Theorem 8.1]{MS2}. A proof independent of \cite{MS2} is given in Section \ref{andrea}.
%{\color{red} The proof uses the fact that $a^{p^{2m}}$ is central in $J$. As this fails in Case 3, the above
%commutator formulas need to be reviewed in Case 3. They may fail as stated.
%}

%We know by \cite[Theorem 8.1]{MS} that $\langle a^{p^m}, b^{p^m}, c^{p^m}\rangle$ is equal to $Z_3(J)$.
%Thus $\langle a^{p^m}, b^{p^m}, c^{p^m}\rangle$ is a normal subgroup of $J$. By Theorem \ref{thm.commutators.J},
%all $V_i$ are in $\langle a^{p^m}, b^{p^m}, c^{p^m}\rangle$. Therefore,
%$$
%N_p\subseteq Z_3(J)=\langle a^{p^m}, b^{p^m}, c^{p^m}\rangle.
%$$
%We already knew this from Proposition \ref{adentro}. 

%Recall from by \cite[Proposition 7.2]{MS} that $a$ and $b$ have order $p^{3m}$ and $c$ has order $p^{2m}$.
By \cite[Proposition 6.4]{MS} the following fundamental relation holds in $J$:
\begin{equation}
\label{funrel}
a^{p^{2m}}b^{p^{2m}}=1.
\end{equation}
Moreover, by \cite[Theorem 8.1]{MS}, 
\begin{equation}
\label{zj}
Z(J)=\langle a^{p^{2m}}\rangle=\langle b^{p^{2m}}\rangle, 
Z_2(J)=\langle a^{p^{2m}}, c^{p^m}\rangle, Z_3(J)=\langle a^{p^m}, b^{p^m}, c^{p^m}\rangle,
\end{equation}
where $Z_3(J)$ is abelian by \cite[Proposition 9.1]{MS}. Furthermore, by \cite[Proposition 7.2]{MS},
\begin{equation}
\label{inter1}
\langle a,c\rangle\cap \langle b\rangle=Z(J)=\langle b,c\rangle\cap \langle a\rangle,
\end{equation}
\begin{equation}
\label{inter2}
\langle a\rangle\cap \langle c\rangle=1=\langle b\rangle\cap \langle c\rangle.
\end{equation}
%It follows that from (\ref{zj}) and (\ref{inter2}) that
%\begin{equation}
%\label{zj2}
%Z_2(J)=\langle a^{p^{2m}}, c^{p^m}\rangle=Z(J)\times\langle c^{p^m}\rangle.
%\end{equation}

Since $\alpha\equiv 1\mod p^h$, we infer $\alpha^i\equiv 1\mod p^h$, and as $\alpha^i\mu^i\equiv 1\mod p^{3m}$,
we can multiply both sides of $\alpha^i\equiv 1\mod p^h$ by $\mu^i$ to derive $1\equiv \mu^i\mod p^h$, $i\geq 1$.
Suppose, if possible, that $\mu\equiv 1\mod p^{h+1}$. Then multiplication by $\alpha$
yields $1\equiv \alpha\mod p^{h+1}$, a contradiction. Thus
\begin{equation}
\label{vpm}
v_p(\alpha-1)=h=v_p(\mu-1).
\end{equation}

Making use of (\ref{defvi}), (\ref{zj}), Theorem \ref{thm.commutators.J}, and $\alpha^i\mu^i\equiv 1\mod p^{3m}$, we see that
\begin{equation}
\label{vi}
V_i=a^{-(p^m\ell/2)(\alpha^i-1)} b^{(p^m\ell/2)(\mu^i-1)} c^{-(p^m\ell/4)(\alpha^i-1)(\mu^i-1)} z_i, \quad i\geq 1,
\end{equation}
where $z_i\in Z(J)$. 

\medskip

\noindent{\bf The case $h>0$.} We assume that $h>0$ throughout this subsection. Then $m=h+n$.

Observe that Theorem \ref{thm.commutators.J} yields $z_i^{p^n}=1$ in (\ref{vi}), because
$p^h$ is a factor of $\phi(\alpha^i)$, $\phi(\mu^i)$, $\varphi(\mu^i)$, and $\varphi(\alpha^i+1)$,
and we have $m=h+n$. Since $Z_3(J)$ is abelian, Proposition \ref{adentro} together with
$\alpha^i\equiv 1\equiv\mu^i\mod p^h$, $c^{p^{2m}}=1$, $m=h+n$, 
$z_i^{p^n}=1$, (\ref{zj}), and (\ref{vi}) yield
\begin{equation}
\label{vip}
V_i^{p^n}=a^{-(p^{m+n}\ell/2)(\alpha^i-1)} b^{(p^{m+n}\ell/2)(\mu^i-1)}\in Z(J),\quad i\geq 1.
\end{equation}
Let $M_{n}$ be the subgroup of $Z_3(J)$ of all
$V$ such that $V^{p^{n}}\in Z(J)$. This is a characteristic subgroup of $Z_3(J)$. As 
$Z_3(J)$ is a normal subgroup of $J$, it follows that $M_n$ is a normal subgroup of $J$.
As all $V_i\in M_n$, we infer that $N_p$ is contained in $M_{n}$. Note that (\ref{funrel}) and (\ref{vip}) give
\begin{equation}
\label{vip2}
V_i^{p^n}=a^{-(p^{m+n}\ell/2)[(\alpha^i-1)+(\mu^i-1)]},\quad i\geq 1.
\end{equation}
Set
$$
\delta_i=-(i p^h t+ \binom{i}{2} p^{2h} t^2+\cdots+ \binom{i}{i} p^{ih} t^i),\quad i\geq 1,
$$
so that
$
\alpha^i=1-\delta_i.
$
As $\alpha^i\mu^i\equiv 1\mod p^{3m}$, we infer
$
\mu^i\equiv 1+\delta_i+\delta_i^2+\cdots+\delta_i^{p^{3m}-1}\mod p^{3m}.
$
Thus
\begin{equation}
\label{cru}
(\alpha^i-1)+(\mu^i-1)\equiv \delta_i^2+\cdots+\delta_i^{p^{3m}-1}\mod p^{3m},
\end{equation}
and therefore
\begin{equation}
\label{vip3}
v_p((\alpha^i-1)+(\mu^i-1))\geq 2h+2v_p(i),\quad i\geq 1.
\end{equation}
It follows from (\ref{vip2}) and (\ref{vip3}) that
$V_i^{p^{2n}}=1$ for all $i\geq 1$.

Let $L_{2n}=\Omega_{2n}(Z_3(J))$ be the subgroup of $Z_3(J)$ of all
$V$ such that $V^{p^{2n}}=1$. This is a characteristic subgroup of $Z_3(J)$. As 
$Z_3(J)$ is a normal subgroup of $J$, it follows that $L_{2n}$ is a normal subgroup of $J$.
As all $V_i\in L_{2n}$, we infer that $N_p$ is contained in $L_{2n}$.

The right hand side of (\ref{cru}) contains the term $i^2 p^{2h} t^2$ while
all other terms are multiples of $p^{3h}$. Thus, if $p\nmid i$, then $p\nmid i^2 t^2$, whence
\begin{equation}
\label{val}
v_p((\alpha^i-1)+(\mu^i-1))=2h,\quad p\nmid i. 
\end{equation}

\begin{prop}\label{v2n} If $p\nmid i$, then $V_i$ has order $p^{2n}$.
\end{prop}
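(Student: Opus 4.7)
The plan is to extract the exact order of $V_i$ directly from the two key identities already assembled in the paragraph preceding the proposition. By \eqref{vip2}, we have the clean formula
\[
V_i^{p^n} \;=\; a^{-(p^{m+n}\ell/2)[(\alpha^i-1)+(\mu^i-1)]} \;\in\; Z(J),
\]
and by \eqref{val}, when $p\nmid i$ the $p$-adic valuation of $(\alpha^i-1)+(\mu^i-1)$ is exactly $2h$. These together should pin down the order.

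Concretely, I would argue as follows. Since $p$ is odd and $p\nmid \ell$, the scalar $-\ell/2$ is a $p$-adic unit, so for $p\nmid i$ the exponent of $a$ in the displayed expression for $V_i^{p^n}$ has $p$-adic valuation exactly
\[
m + n + 2h \;=\; 3h+2n,
\]
using $m=h+n$. Recalling that $o(a)=p^{3m}=p^{3h+3n}$ and that $Z(J)=\langle a^{p^{2m}}\rangle$ is cyclic, the order of the element $V_i^{p^n}$ in $\langle a\rangle$ is exactly
\[
p^{3m}/p^{3h+2n} \;=\; p^n.
\]
Raising once more then gives $V_i^{p^{2n}}=(V_i^{p^n})^{p^n}=1$ while $V_i^{p^{2n-1}}=(V_i^{p^n})^{p^{n-1}}\neq 1$, so $V_i$ has order exactly $p^{2n}$, as claimed.

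The degenerate case $n=0$ needs only a one-line check: then \eqref{vip2} reads $V_i=a^{-p^m(\ell/2)[(\alpha^i-1)+(\mu^i-1)]}$ with exponent of valuation $m+2h=3h=3m$, so $V_i=1=p^{2n}$ as required. I do not anticipate a genuine obstacle: the statement is essentially the observation that \eqref{val} upgrades the lower bound used to prove $V_i^{p^{2n}}=1$ to an equality, which immediately forces the order to be exactly $p^{2n}$; all the heavy lifting (the commutator formulas of Theorem \ref{thm.commutators.J}, the abelianness of $Z_3(J)$, the relation \eqref{funrel}, and the vanishing $z_i^{p^n}=1$) has already been carried out in the derivation of \eqref{vip2}.
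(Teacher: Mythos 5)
Your proof is correct and follows exactly the route the paper intends: the paper's own proof of Proposition \ref{v2n} is literally ``Immediate consequence of (\ref{vip2}) and (\ref{val})'', and you have simply supplied the valuation bookkeeping ($m+n+2h=3h+2n$ against $o(a)=p^{3h+3n}$ and the cyclicity of $Z(J)$) that makes this immediate. The only cosmetic slip is the phrase ``$V_i=1=p^{2n}$'' in the $n=0$ case, which should read that $V_i=1$ and hence has order $1=p^{2n}$.
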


\begin{proof} Immediate consequence of (\ref{vip2}) and (\ref{val}).
\end{proof}

\begin{prop}\label{nuevoandrea} We have
$$
Z_3(J)=\langle a^{p^{m}}, a^{p^{m}}b^{p^{m}},c^{p^{m}}\rangle=\langle b^{p^{m}},a^{p^{m}}b^{p^{m}},c^{p^{m}}\rangle
\cong C_{p^{2m}}\times  C_{p^{m}}\times C_{p^{m}},
$$
where $Z_3(J)$ is the internal direct product of $\langle a^{p^{m}}\rangle$, $\langle a^{p^{m}}b^{p^{m}}\rangle$,
and $\langle c^{p^{m}}\rangle$.
\end{prop}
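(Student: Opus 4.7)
The plan is to use the facts from the excerpt that $Z_3(J)$ is abelian, $o(a)=o(b)=p^{3m}$, $o(c)=p^{2m}$, the fundamental relation $a^{p^{2m}}b^{p^{2m}}=1$ from (\ref{funrel}), and the intersection identities (\ref{inter1}) and (\ref{inter2}). Since $Z_3(J)=\langle a^{p^m},b^{p^m},c^{p^m}\rangle$ is already known to be abelian, everything reduces to an order/independence calculation, after which both stated generating sets will follow since $a^{p^m}=(a^{p^m}b^{p^m})b^{-p^m}$.

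First I would compute the orders of the three proposed direct factors. Clearly $o(a^{p^m})=p^{2m}$ and $o(c^{p^m})=p^m$. For $a^{p^m}b^{p^m}$, the abelianness of $Z_3(J)$ combined with (\ref{funrel}) gives $(a^{p^m}b^{p^m})^{p^m}=a^{p^{2m}}b^{p^{2m}}=1$, so its order divides $p^m$. To rule out a smaller order, I would note that $(a^{p^m}b^{p^m})^{p^{m-1}}=1$ would force $a^{p^{2m-1}}=b^{-p^{2m-1}}\in\langle a\rangle\cap\langle b\rangle\subseteq Z(J)=\langle a^{p^{2m}}\rangle$ by (\ref{inter1}), which is impossible since $v_p(p^{2m-1})=2m-1<2m$ inside the cyclic group $\langle a\rangle$ of order $p^{3m}$. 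Hence $o(a^{p^m}b^{p^m})=p^m$.

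Next I would check the three pairwise-trivial intersections required for internal directness in $Z_3(J)$. For $\langle a^{p^m}\rangle\cap\langle a^{p^m}b^{p^m}\rangle$, a relation $a^{p^m i}=a^{p^m j}b^{p^m j}$ forces $b^{p^m j}\in\langle a\rangle\cap\langle b\rangle\subseteq\langle b^{p^{2m}}\rangle$, whence $p^m\mid j$ and so the right-hand side is trivial. For $G_1\cap\langle c^{p^m}\rangle$, where $G_1=\langle a^{p^m}\rangle\langle a^{p^m}b^{p^m}\rangle=\langle a^{p^m},b^{p^m}\rangle$, any element $a^{p^m i}b^{p^m j}=c^k$ rearranges to $a^{p^m i}\in\langle b,c\rangle\cap\langle a\rangle=Z(J)=\langle a^{p^{2m}}\rangle$ by (\ref{inter1}), forcing $p^m\mid i$; symmetrically $p^m\mid j$, and then the element lies in $Z(J)\cap\langle c\rangle\subseteq\langle a\rangle\cap\langle c\rangle=1$ by (\ref{inter2}).

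Putting the pieces together, $\langle a^{p^m}\rangle\cdot\langle a^{p^m}b^{p^m}\rangle\cdot\langle c^{p^m}\rangle$ is an internal direct product inside $Z_3(J)$ of order $p^{2m}\cdot p^m\cdot p^m=p^{4m}$. Since this subgroup contains $a^{p^m}$, $b^{p^m}=(a^{p^m}b^{p^m})(a^{p^m})^{-1}$, and $c^{p^m}$, it equals $Z_3(J)$, yielding the isomorphism type $C_{p^{2m}}\times C_{p^m}\times C_{p^m}$ and the first generating set. The second generating set follows at once from $a^{p^m}=(a^{p^m}b^{p^m})b^{-p^m}$. The main obstacle is really just the lower bound $o(a^{p^m}b^{p^m})=p^m$, and that is handled by the short argument above; once the orders are in hand, the direct-product structure is immediate from (\ref{inter1}) and (\ref{inter2}).
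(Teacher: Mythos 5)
Your proof is correct and uses exactly the ingredients the paper's one-line proof cites — the relation $a^{p^{2m}}b^{p^{2m}}=1$, the intersection identities (\ref{inter1})--(\ref{inter2}), the abelianness of $Z_3(J)$, and the orders of $a$, $b$, $c$ — so it is essentially the same argument, just written out in full. The order computation for $a^{p^m}b^{p^m}$ and the two intersection checks are all sound.
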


\begin{proof} Use (\ref{funrel})-(\ref{inter2}), that $Z_3(J)$ is abelian, and the stated orders of $a$, $b$, and $c$.
\end{proof}

\begin{prop}\label{nuevoandrea2} We have
$$
M_n=\langle a^{p^{m+h}}, a^{p^{m+h}}b^{p^{m+h}},c^{p^{m+h}}\rangle=\langle b^{p^{m+h}}, a^{p^{m+h}}b^{p^{m+h}},c^{p^{m+h}}\rangle.
$$
\end{prop}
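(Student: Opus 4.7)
The strategy is to work entirely inside the abelian group $Z_3(J)$, using the direct product decomposition supplied by Proposition \ref{nuevoandrea} to convert the defining condition of $M_n$ (namely $V^{p^n}\in Z(J)$) into three independent congruences on the exponents. Fix the decomposition
\[
Z_3(J)=\langle a^{p^{m}}\rangle\times \langle a^{p^{m}}b^{p^{m}}\rangle\times \langle c^{p^{m}}\rangle
\]
with cyclic factors of orders $p^{2m},p^{m},p^{m}$, and note that $Z(J)=\langle a^{p^{2m}}\rangle$ is precisely the unique subgroup of order $p^{m}$ of the first factor.

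Write a typical $V\in Z_3(J)$ uniquely as
\[
V=a^{p^{m}\alpha_1}\,(a^{p^{m}}b^{p^{m}})^{\beta_1}\,c^{p^{m}\gamma_1}.
\]
Since $Z_3(J)$ is abelian, $V^{p^n}=a^{p^{m+n}\alpha_1}(a^{p^{m}}b^{p^{m}})^{p^n\beta_1}c^{p^{m+n}\gamma_1}$.
This lies in the first factor (and hence has a chance of being in $Z(J)$) if and only if the second and third coordinates vanish, i.e.\ $p^{n}\beta_1\equiv 0\pmod{p^{m}}$ and $p^{n}\gamma_1\equiv 0\pmod{p^{m}}$; recalling $m=h+n$, these become $\beta_1,\gamma_1\equiv 0\pmod{p^{h}}$. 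For the first-factor component $a^{p^{m+n}\alpha_1}$ to lie in $Z(J)=\langle a^{p^{2m}}\rangle$ inside the cyclic group $\langle a^{p^{m}}\rangle$ of order $p^{2m}$, one needs $p^{n}\alpha_1\equiv 0\pmod{p^{m}}$, i.e.\ $\alpha_1\equiv 0\pmod{p^{h}}$.

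Therefore $M_n$ consists exactly of those $V$ with $\alpha_1,\beta_1,\gamma_1\in p^{h}\Z$, which gives
\[
M_n=\langle a^{p^{m+h}},\,(a^{p^{m}}b^{p^{m}})^{p^{h}},\,c^{p^{m+h}}\rangle=\langle a^{p^{m+h}},\,a^{p^{m+h}}b^{p^{m+h}},\,c^{p^{m+h}}\rangle,
\]
using abelianness of $Z_3(J)$ to expand $(a^{p^{m}}b^{p^{m}})^{p^{h}}=a^{p^{m+h}}b^{p^{m+h}}$. The second stated equality is then immediate elementary algebra inside the abelian group $M_n$: the identities
\[
b^{p^{m+h}}=(a^{p^{m+h}}b^{p^{m+h}})(a^{p^{m+h}})^{-1},\qquad a^{p^{m+h}}=(a^{p^{m+h}}b^{p^{m+h}})(b^{p^{m+h}})^{-1},
\]
show that $\langle a^{p^{m+h}},a^{p^{m+h}}b^{p^{m+h}}\rangle=\langle b^{p^{m+h}},a^{p^{m+h}}b^{p^{m+h}}\rangle$.

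The only delicate point is the order bookkeeping: one must correctly recognize $Z(J)$ as the subgroup of order $p^{m}$ sitting inside the cyclic factor $\langle a^{p^{m}}\rangle$ of order $p^{2m}$, and then translate ``$p^{n}\alpha_1$ is a multiple of $p^{m}$ modulo $p^{2m}$'' into the single congruence $\alpha_1\equiv 0\pmod{p^{h}}$ via $m=h+n$. Everything else is formal manipulation in the abelian group $Z_3(J)$.
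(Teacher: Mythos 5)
Your proof is correct and follows exactly the route the paper intends: the paper's own proof is just the one-line assertion that the claim is ``immediate from Proposition \ref{nuevoandrea} and the definition of $M_n$,'' and your argument supplies precisely the coordinate computation in the direct product decomposition $Z_3(J)=\langle a^{p^m}\rangle\times\langle a^{p^m}b^{p^m}\rangle\times\langle c^{p^m}\rangle$ that makes this immediate. The order bookkeeping (using $Z(J)=\langle a^{p^{2m}}\rangle$ as the order-$p^m$ subgroup of the first factor and $m=h+n$) is handled correctly.
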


\begin{proof} Immediate consequence of Proposition \ref{nuevoandrea} and the definition of $M_n$.
\end{proof}

\begin{prop}\label{nuevoandrea3} We have
$$
L_{2n}=\begin{cases} \langle a^{p^{m+2h}}, a^{p^{m}}b^{p^{m}},c^{p^{m}}\rangle=\langle b^{p^{m+2h}}, a^{p^{m}}b^{p^{m}},c^{p^{m}}
\rangle\text{ if }n\geq h,\\
\langle a^{p^{m+2h}}, a^{p^{2h}}b^{p^{2h}},c^{p^{2h}}\rangle=\langle b^{p^{m+2h}}, a^{p^{2h}}b^{p^{2h}},c^{p^{2h}}
\rangle\text{ if }n\leq h.
\end{cases}
$$
\end{prop}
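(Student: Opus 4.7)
The plan is to leverage the explicit internal direct product decomposition of $Z_3(J)$ furnished by Proposition~\ref{nuevoandrea}, namely
\[
Z_3(J) = \langle a^{p^{m}}\rangle \,\times\, \langle a^{p^{m}}b^{p^{m}}\rangle \,\times\, \langle c^{p^{m}}\rangle,
\]
where the three cyclic factors have orders $p^{2m}$, $p^{m}$, and $p^{m}$ respectively. Writing a general element uniquely as $V = a^{p^{m}i}(a^{p^{m}}b^{p^{m}})^{j}c^{p^{m}k}$ with $i\in\Z/p^{2m}\Z$ and $j,k\in\Z/p^{m}\Z$, and using the abelianness of $Z_3(J)$, I get
\[
V^{p^{2n}} = a^{p^{m+2n}i}\,(a^{p^{m}}b^{p^{m}})^{p^{2n}j}\,c^{p^{m+2n}k}.
\]
The condition $V^{p^{2n}}=1$ thus forces $p^{2m-2n}\mid i$ in the first cyclic factor, together with $p^{m-2n}\mid j$ and $p^{m-2n}\mid k$ in the other two (the last two being vacuous when $2n\geq m$).

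Since the standing assumption of the subsection is $h>0$ with $m=h+n$, we always have $2m-2n=2h$, whereas $m-2n=h-n$ switches sign at $n=h$. This is exactly what produces the case split in the statement. When $n\geq h$, the constraints on $j,k$ are vacuous, and $L_{2n}$ is generated by $a^{p^{m}\cdot p^{2h}}=a^{p^{m+2h}}$ together with $a^{p^{m}}b^{p^{m}}$ and $c^{p^{m}}$. When $n\leq h$, the surviving $j,k$-conditions contribute the generators $(a^{p^{m}}b^{p^{m}})^{p^{h-n}} = a^{p^{2h}}b^{p^{2h}}$ and $c^{p^{m+h-n}} = c^{p^{2h}}$, using the identity $m+h-n=2h$.

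The second equality in each branch follows from the fundamental relation~(\ref{funrel}). When $n\geq h$, the identity $(a^{p^{m}}b^{p^{m}})^{p^{2h}} = a^{p^{m+2h}}b^{p^{m+2h}}$ shows that $a^{p^{m+2h}}$ and $b^{p^{m+2h}}$ differ by an element of $\langle a^{p^{m}}b^{p^{m}}\rangle$, so the two generating sets agree. When $n\leq h$, the inequality $m+2h\geq 2m$ combined with $a^{p^{2m}} = b^{-p^{2m}}$ forces $a^{p^{m+2h}} = b^{-p^{m+2h}}$ outright, so the two generating sets even coincide as sets.

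No serious obstacle is anticipated: the argument is essentially bookkeeping inside the direct product $C_{p^{2m}}\times C_{p^{m}}\times C_{p^{m}}$. The only mildly delicate point is keeping track of the sign of $m-2n = h-n$ and using the identity $m=h+n$ to express the resulting exponents of $p$ in the form required by the statement.
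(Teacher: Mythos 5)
Your proof is correct and follows exactly the route the paper intends: the paper's own proof is the one-line remark that the claim is an ``immediate consequence of Proposition~\ref{nuevoandrea} and the definition of $L_{2n}$,'' and your computation of the $p^{2n}$-torsion in each factor of the direct product $C_{p^{2m}}\times C_{p^m}\times C_{p^m}$, together with the use of $m=h+n$ and the relation $a^{p^{2m}}b^{p^{2m}}=1$ to justify the alternative generating sets, is precisely the bookkeeping being left to the reader. Nothing further is needed.
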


\begin{proof} Immediate consequence of Proposition \ref{nuevoandrea} and the definition of $L_{2n}$.
\end{proof}

\begin{prop}\label{mln} We have
$$
M_n\cap L_{2n}=\langle a^{p^{m+2h}},a^{p^{m+h}}b^{p^{m+h}},c^{p^{m+h}}\rangle=
\langle b^{p^{m+2h}},a^{p^{m+h}}b^{p^{m+h}},c^{p^{m+h}}\rangle
\cong C_{p^{2n}}\times  C_{p^{n}}\times C_{p^{n}},
$$
where $a^{p^{m+2h}}$ and $b^{p^{m+2h}}$ have order $p^{2n}$, $a^{p^{m+h}}b^{p^{m+h}}$ has order $p^n$, 
and $c^{p^{m+h}}$ has order $p^n$.
\end{prop}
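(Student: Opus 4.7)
The plan is to exploit the internal direct product decomposition
\[
Z_3(J) = \langle a^{p^m}\rangle \times \langle a^{p^m}b^{p^m}\rangle \times \langle c^{p^m}\rangle
\]
given in Proposition \ref{nuevoandrea}, together with the observation that both $M_n$ and $L_{2n}$ respect this decomposition. Indeed, the three generators of $M_n$ listed in Proposition \ref{nuevoandrea2} are $(a^{p^m})^{p^h}$, $(a^{p^m}b^{p^m})^{p^h}$ and $(c^{p^m})^{p^h}$, one in each direct factor; and the same observation applies to the generators of $L_{2n}$ in either subcase of Proposition \ref{nuevoandrea3}. Consequently $M_n \cap L_{2n}$ can be computed factor by factor in this decomposition.

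In the first factor $\langle a^{p^m}\rangle \cong C_{p^{2m}}$ we have $\langle a^{p^{m+h}}\rangle \cap \langle a^{p^{m+2h}}\rangle = \langle a^{p^{m+2h}}\rangle$, which has order $p^{2m-2h}=p^{2n}$ since $m=h+n$. In the second factor $\langle a^{p^m}b^{p^m}\rangle \cong C_{p^m}$, using the inequality $m+h = 2h+n \geq \max(m,2h)$, I would observe that $\langle a^{p^{m+h}}b^{p^{m+h}}\rangle$ is contained in $\langle a^{p^m}b^{p^m}\rangle$ (the relevant piece of $L_{2n}$ when $n\geq h$) and equally in $\langle a^{p^{2h}}b^{p^{2h}}\rangle$ (when $n\leq h$), so the factorwise intersection is $\langle a^{p^{m+h}}b^{p^{m+h}}\rangle$, of order $p^m/p^h=p^n$. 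An identical argument yields $\langle c^{p^{m+h}}\rangle$, of order $p^n$, in the third factor. Assembling gives the first displayed generating set and the isomorphism type $C_{p^{2n}}\times C_{p^n}\times C_{p^n}$.

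For the alternative presentation with $b^{p^{m+2h}}$ in place of $a^{p^{m+2h}}$, I would use that $Z_3(J)$ is abelian to raise the generator $a^{p^{m+h}}b^{p^{m+h}}$ to the $p^h$-th power, obtaining $a^{p^{m+2h}}b^{p^{m+2h}}$, which lies in the subgroup; combined with $a^{p^{m+2h}}$ this produces $b^{p^{m+2h}}$, and the symmetric manipulation recovers $a^{p^{m+2h}}$ from the other generating set. The only step requiring care is the exponent bookkeeping of Proposition \ref{nuevoandrea3}, namely the verification that $\langle a^{p^{m+h}}b^{p^{m+h}}\rangle$ and $\langle c^{p^{m+h}}\rangle$ are contained in the relevant factors of $L_{2n}$ in both subcases $n\geq h$ and $n\leq h$; once those inequalities are in place, the direct product structure makes the remainder of the argument immediate.
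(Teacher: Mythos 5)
Your proposal is correct and is essentially the argument the paper intends: Propositions \ref{nuevoandrea2} and \ref{nuevoandrea3} present $M_n$ and $L_{2n}$ as subgroups generated by one element in each direct factor of the decomposition $Z_3(J)=\langle a^{p^m}\rangle\times\langle a^{p^m}b^{p^m}\rangle\times\langle c^{p^m}\rangle$, so the intersection is computed factorwise exactly as you do, with $m=h+n$ giving the orders $p^{2n},p^n,p^n$. The exchange of $a^{p^{m+2h}}$ for $b^{p^{m+2h}}$ via the $p^h$-th power of $a^{p^{m+h}}b^{p^{m+h}}$ in the abelian group $Z_3(J)$ is also the right justification for the second generating set.
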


\begin{proof} Immediate consequence of Propositions \ref{nuevoandrea2} and \ref{nuevoandrea3}.
\end{proof}

\begin{prop}\label{ordnz} We have $\Omega_n(Z(J))=\langle a^{p^{2m+h}}\rangle\subseteq N_p$.
\end{prop}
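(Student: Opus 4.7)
The proposition makes two assertions: the identification $\Omega_n(Z(J)) = \langle a^{p^{2m+h}}\rangle$, and the inclusion of this subgroup in $N_p$. I would treat them separately.

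For the identification, the plan is to exploit (\ref{zj}), which gives $Z(J) = \langle a^{p^{2m}}\rangle$. Since $a$ has order $p^{3m}$ by the discussion around (\ref{relgp}), $Z(J)$ is cyclic of order $p^m$. Because we are in the subsection where $m = h + n$, we have $n \le m$, so $\Omega_n(Z(J))$ is the unique subgroup of order $p^n$, generated by $(a^{p^{2m}})^{p^{m-n}} = a^{p^{3m-n}}$. The identity $3m - n = 2m + (m - n) = 2m + h$ then yields $\Omega_n(Z(J)) = \langle a^{p^{2m+h}}\rangle$.

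For the inclusion, I would work with the single element $V_1 \in N_p$ from the list (\ref{defvi}). Applying (\ref{vip2}) with $i = 1$ gives
$$
V_1^{p^n} = a^{-(p^{m+n}\ell/2)[(\alpha - 1) + (\mu - 1)]}.
$$
Since $p \nmid 1$, (\ref{val}) yields $v_p((\alpha - 1) + (\mu - 1)) = 2h$. Combined with the fact that $p$ is odd (so $2$ is a unit $p$-adically) and $p \nmid \ell$, this forces the exponent of $a$ in $V_1^{p^n}$ to have $p$-adic valuation exactly $m + n + 2h$, which coincides with $2m + h$ via $m = h + n$. Hence $V_1^{p^n}$ generates the same cyclic subgroup of $Z(J)$ as $a^{p^{2m+h}}$. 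Since $V_1 \in N_p$ and $N_p$ is a subgroup, the inclusion $\langle a^{p^{2m+h}}\rangle \subseteq N_p$ follows.

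There is no real obstacle here: the entire argument is bookkeeping of $p$-adic valuations that draws together the cyclic description of $Z(J)$ from (\ref{zj}) with the previously derived formulas (\ref{vip2}) and (\ref{val}). The only subtle point worth verifying is that the numerical identity $m + n + 2h = 2m + h$ indeed holds, which uses $m = h + n$ crucially and is precisely why the result takes this clean form.
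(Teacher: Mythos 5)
Your proof is correct and follows essentially the same route as the paper: the paper also obtains the inclusion from $V_1^{p^n}$ via (\ref{vip2}) and (\ref{val}) (packaged as Proposition \ref{v2n}, which gives $V_1^{p^n}$ order $p^n$) together with the fact that the cyclic group $Z(J)$ has a unique subgroup of order $p^n$. Your explicit valuation computation $m+n+2h=2m+h$ is just an unpacking of that same argument.
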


\begin{proof} We know from \eqref{vip} that $V_1^{p^n}\in Z(J)$. By Proposition \ref{v2n},  the order of $V_1^{p^n}$ is~$p^n$.
Thus $\langle V_1^{p^n}\rangle$ is a subgroup of $Z(J)$ of order $p^n$.
Being cyclic, $Z(J)$ has only one subgroup of order $p^n$, namely $\langle a^{p^{2m+h}}\rangle$.
Thus $N_p\supseteq \langle V_1^{p^n}\rangle=\langle a^{p^{2m+h}}\rangle=\Omega_n(Z(J))$.
\end{proof}

\begin{prop}
\label{nor1} We have $c^{p^{m+h}}\in N_p$.
\end{prop}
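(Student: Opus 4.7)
The plan is to extract $c^{p^{m+h}}$ (up to factors already known to live in $N_p$) from the single commutator $[V_1,a]$. Since $V_1\in N_p$ and $N_p\trianglelefteq J$, we have $[V_1,a]=V_1^{-1}V_1^{a}\in N_p$, so it suffices to decode this commutator via Theorem \ref{thm.commutators.J}.

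From \eqref{vi} write $V_1=a^{A}b^{B}c^{C}z_1$, where
$A=-(p^m\ell/2)(\alpha-1)$, $B=(p^m\ell/2)(\mu-1)$, $C=-(p^m\ell/4)(\alpha-1)(\mu-1)$, and $z_1\in Z(J)$. All four factors lie in $Z_3(J)$, which is abelian (\cite[Prop.~9.1]{MS}, or Proposition \ref{nuevoandrea}), and $[Z_3(J),J]\subseteq Z_2(J)\subseteq Z_3(J)$, so the partial commutators $[b^{B},a]$ and $[c^{C},a]$ also lie in the abelian $Z_3(J)$. Exploiting commutativity and the fact that $a^{A}$ and $z_1$ commute with $a$, the commutator collapses to
\[
[V_1,a]=[b^{B},a]\,[c^{C},a].
\]

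Next, use Theorem \ref{thm.commutators.J} to evaluate each factor. Setting $i=1$ in \eqref{eq.comj.4} and inverting yields $[b^{B},a]=a^{p^{2m}\ell^{2}(\phi(B)+2\varphi(B))}c^{-B}$; using $v_p(B)=m+h>0$, the hypothesis $p$ odd (and the extra check $h\geq 1$ when $p=3$ to absorb the $1/6$ in $\varphi$), a valuation count shows the $a$-exponent has $v_p\geq 3m$, so by $a^{p^{3m}}=1$ this $a$-factor is trivial. Formula \eqref{eq.comj.2} with $j=1$ gives $[c^{C},a]=a^{-p^{m}\ell C-p^{2m}\ell^{2}\phi(C)}$; with $v_p(C)=m+2h$, the $a$-exponent has $v_p\geq 2m+2h\geq 2m+h$, so this entire factor belongs to $\langle a^{p^{2m+h}}\rangle=\Omega_n(Z(J))$, which by Proposition \ref{ordnz} is contained in $N_p$.

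Combining, $[V_1,a]\equiv c^{-B}\pmod{N_p}$, hence $c^{-B}\in N_p$. By \eqref{vpm}, $v_p(\mu-1)=h$, and since $p$ is odd and $p\nmid\ell$, we can write $B=p^{m+h}u$ with $\gcd(u,p)=1$. Thus $c^{-B}$ is a generator of the cyclic subgroup $\langle c^{p^{m+h}}\rangle$ of order $p^{2m-(m+h)}=p^{n}$, and therefore $c^{p^{m+h}}\in\langle c^{-B}\rangle\subseteq N_p$.

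The only delicate part of the argument is the valuation bookkeeping in the middle step: one must verify that every stray $a$-power produced while expanding the commutator either dies because of $a^{p^{3m}}=1$ or lands inside $\Omega_n(Z(J))\subseteq N_p$, and that the surviving $c$-term has $p$-adic valuation \emph{exactly} $m+h$ (not higher), so that it truly generates $\langle c^{p^{m+h}}\rangle$ and not a proper subgroup of it.
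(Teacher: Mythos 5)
Your argument is correct and is essentially the paper's own proof: both extract $c^{p^{m+h}}$ from the conjugate $V_1^a$ (equivalently $[V_1,a]$), identify the leading factor $c^{-B}$ with $v_p(B)=m+h$ exactly, and absorb all remaining stray factors into $\Omega_n(Z(J))=\langle a^{p^{2m+h}}\rangle\subseteq N_p$ via Proposition \ref{ordnz}. One small slip: your formula for $[b^{B},a]$ omits the factor $b^{-p^{m}\ell\phi(B)}$ coming from \eqref{eq.comj.4}, but since $v_p(p^{m}\ell\phi(B))=2m+h$ this factor equals $a^{p^{m}\ell\phi(B)}\in\Omega_n(Z(J))\subseteq N_p$ by \eqref{funrel}, so the conclusion is unaffected.
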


\begin{proof} Applying Theorem \ref{thm.commutators.J} to (\ref{vi}) yields
$
V_1^a=V_1 c^{-p^m(\ell/2)(\mu-1)} x,
$
where $x\in Z(J)$ and $x^{p^n}=1$. It now follows from Proposition \ref{ordnz} that $c^{p^{m+h}}\in N_p$.
\end{proof}

We know that all $z_i$ appearing in (\ref{vi}) satisfy $z_i\in Z(J)$ and $z_i^{p^n}=1$. Thus, all $z_i\in N_p$ by Proposition  \ref{ordnz}.
All $c^{-p^m(\ell/4)(\alpha^i-1)(\mu^i-1)}$ appearing
in (\ref{vi}) belong to $\langle c^{p^{m+h}}\rangle$, which is contained in $N_p$ by Proposition~\ref{nor1}. It follows 
that all $F_i$ appearing in (\ref{deffi}) are in $N_p$.

Recall that $\alpha=1+p^h t$, which implies $\mu=1-p^h t+p^{2h} f$, where $p\nmid f$. Thus $\mu=1+p^h t_0$,
where $t_0=-t+p^h f$ is not a multiple of $p$, and  $\alpha=1-p^h t_0+p^{2h}f_0$, where $p\nmid f_0$. 

Using (\ref{vpm}) and $v_p(\mu-\alpha)=h$,
we deduce from (\ref{vui}) with $i=2$ that
\begin{equation}
\label{enn2}
a^{p^{m+2h}}\in N_p,\; b^{p^{m+2h}}\in N_p,
\end{equation}
and from (\ref{vui2}) that
\begin{equation}
\label{enn22}
F_i\in \langle F_1, a^{p^{m+2h}}\rangle,\; F_i\in \langle F_1, b^{p^{m+2h}}\rangle,\quad i\geq 1.
\end{equation}
On the other hand, taking $i=1$ in (\ref{deffi}) yields
$$
F_1=a^{-p^{m+h}t} b^{-p^{m+h}t}b^{p^{m+2h}f}=a^{p^{m+h}t_0} b^{p^{m+h}t_0}a^{p^{m+2h}f_0},
$$
so (\ref{enn2}) and $F_1\in N_p$ imply
\begin{equation}
\label{enn23}
a^{p^{m+h}} b^{p^{m+h}}\in N_p.
\end{equation}
It now follows from Propositions \ref{mln} and \ref{nor1}, as well as (\ref{enn2}) and (\ref{enn23}), that 
$M_n\cap L_{2n}\subseteq N_p$. But we already know that $N_p \subseteq M_n\cap L_{2n}$, so $N_p=M_n\cap L_{2n}$.

\begin{theorem}\label{main1} We have
$$
N_p=M_n\cap L_{2n}=\langle a^{p^{m+2h}}, a^{p^{m+h}}b^{p^{m+h}},c^{p^{m+h}}\rangle=
\langle b^{p^{m+2h}}, a^{p^{m+h}}b^{p^{m+h}},c^{p^{m+h}}\rangle
\cong C_{p^{2n}}\times  C_{p^{n}}\times C_{p^{n}};
$$
%where $a^{p^{m+2h}}$ and $b^{p^{m+2h}}$ have order $p^{2n}$, $a^{p^{m+h}}b^{p^{m+h}}$ has order $p^n$, and $c^{p^{m+h}}$ has order $p^{n}$;
$$
|N_p|=p^{4n},\; |G_p/N_p|=p^{7h+3n},\; |W_p|=p^{7h+4n}=p^{7m-3n};
$$
$G_p/N_p$ is generated by elements $a_0,b_0,c_0$ subject to defining relations
\begin{equation}
\label{rel0}
c_0=[a_0,b_0], a_0^{c_0}=a_0^{\alpha^\gamma},\, {}^{c_0} b_0=b_0^{\alpha^\gamma},
\end{equation}
\begin{equation}
\label{rel1}
a_0^{p^{m+2h}}=1, a_0^{p^{m+h}}b_0^{p^{m+h}}=1, b_0^{p^{m+2h}}=1, c_0^{p^{m+h}}=1,
\end{equation}
and every element of $G_p/N_p$ can be written in one and only way in the form
\begin{equation}
\label{write}
a_0^{e_1} b_0^{e_2}c_0^{e_3},\quad 0\leq e_1<p^{m+2h}, 0\leq e_2<p^{m+h}, 0\leq e_3<p^{m+h};
\end{equation}
$W_p$ is generated by elements $a_0,b_0,c_0,d_0$ subject to the defining relations (\ref{rel0}), (\ref{rel1}),
as well as
\begin{equation}
\label{rel2}
a_0^{d_0}=a_0^{\alpha^{qk}},\; {}^{d_0} b_0=b_0^{\alpha^{qk}},\; d_0^{p^n}=c_0^q,
\end{equation}
and every element of $W_p$ can be written in one and only way in the form
$$
a_0^{e_1} b_0^{e_2}c_0^{e_3}d_0^{e_4},\quad 0\leq e_1<p^{m+2h}, 0\leq e_2<p^{m+h}, 0\leq e_3<p^{m+h}, 0\leq e_4<p^{n}.
$$
Moreover, the automorphism $a\leftrightarrow b$, $c\leftrightarrow c^{-1}$ of $G_p$ induces an automorphism $a_0\leftrightarrow b_0$, 
$c_0\leftrightarrow c_0^{-1}$ on $G_p/N_p$, which extends to an automorphism of $W_p$ via $d_0\leftrightarrow d_0^{-1}$.
\end{theorem}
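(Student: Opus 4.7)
The plan is to assemble the preceding propositions into the combined statement, to establish the presentations and normal forms by a counting argument, and finally to check that the natural involution of $G_p$ descends to $G_p/N_p$ and extends to $W_p$. Since most of the analytic work has already been carried out in the paragraphs leading up to the theorem, the proof will essentially be a synthesis.

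First I would observe that the equality $N_p = M_n \cap L_{2n}$ is already in hand: the containment $N_p \subseteq M_n \cap L_{2n}$ follows from \eqref{vip}, \eqref{vip2} and \eqref{vip3}, while the reverse inclusion follows from Propositions \ref{ordnz} and \ref{nor1} together with $F_1 \in N_p$ and \eqref{enn2}--\eqref{enn23}. Proposition \ref{mln} then supplies the explicit generating sets and the isomorphism $N_p \cong C_{p^{2n}} \times C_{p^n} \times C_{p^n}$, whence $|N_p|=p^{4n}$. Combined with $|G_p|=p^{7m}$ and $m=h+n$, this gives $|G_p/N_p|=p^{7h+3n}$, and Theorem \ref{dos} yields $|W_p|=p^n|G_p|/|N_p|=p^{7h+4n}=p^{7m-3n}$.

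For the presentations, I would take \eqref{relgp} as the starting presentation of $G_p$ and adjoin the three generators of $N_p$ as relators, which produces exactly \eqref{rel0}--\eqref{rel1}. A standard collecting argument using these added relations shows that every element of the presented group admits an expression of the form \eqref{write}, giving at most $p^{(m+2h)+(m+h)+(m+h)}=p^{7h+3n}$ elements. Since this matches the known order of $G_p/N_p$, the representatives must be pairwise distinct and the normal form is unique. The presentation of $W_p$ is then obtained by adjoining $d_0$ subject to \eqref{rel2}, which are precisely the relations \eqref{rele} specialized to the current setting; the corresponding normal form in $W_p$ follows because $W_p/\langle a_0,b_0\rangle$ is cyclic of order $p^n$ generated by the image of $d_0$.

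Finally, I would verify that the involution $a \leftrightarrow b$, $c \leftrightarrow c^{-1}$ of $G_p$ stabilizes $N_p$. The generator $a^{p^{m+2h}}$ is sent to $b^{p^{m+2h}} \in N_p$ by \eqref{enn2}; the element $a^{p^{m+h}}b^{p^{m+h}}$ is sent to $b^{p^{m+h}}a^{p^{m+h}}$, which coincides with the original element because $a^{p^{m+h}}$ and $b^{p^{m+h}}$ both lie in the abelian group $Z_3(J)$; and $c^{p^{m+h}}$ is sent to its inverse, which still lies in $\langle c^{p^{m+h}}\rangle$. Hence the involution descends to $G_p/N_p$, and it extends to $W_p$ by $d_0 \leftrightarrow d_0^{-1}$ once one checks that the first two relations of \eqref{rel2} are interchanged while the third is preserved under simultaneous inversion of both sides. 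The only genuine obstacle is the stability of $N_p$ under the involution, and that rests on the abelianness of $Z_3(J)$ from \cite[Proposition 9.1]{MS}; the rest is counting and bookkeeping.
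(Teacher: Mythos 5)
Your proposal is correct and follows essentially the same route as the paper: the equality $N_p=M_n\cap L_{2n}$ and $|N_p|=p^{4n}$ are exactly the content of the discussion preceding the theorem, the orders follow from $|G_p|=p^{7m}$ with $m=h+n$ and Theorem \ref{dos}, and the normal form is obtained by descending the normal form of $G_p$ (the paper cites \cite[Lemma 6.1 or Theorem 7.1]{MS} where you invoke a collecting argument) and then counting. Your explicit check that the involution $a\leftrightarrow b$, $c\leftrightarrow c^{-1}$ stabilizes the listed generators of $N_p$ is a small but welcome addition that the paper's proof leaves implicit.
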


\begin{proof} All statements up to and including $|N_p|=p^{4n}$ have already been proven. It now follows from \cite[Theorem 7.1]{MS} that
$
|G_p/N_p|=p^{7h+7n}/p^{4n}= p^{7h+3n},
$
whence by Theorem \ref{dos}
$
|W_p|=p^{7h+4n}.
$
We see from $N_p=\langle a^{p^{m+h}}b^{p^{m+h}},a^{p^{m+2h}},c^{p^{m+h}}\rangle$ and (\ref{relgp}) that
$G_p/N_p$ has the stated presentation. It follows from \cite[Lemma 6.1 or Theorem 7.1]{MS} that every element
of $G_p/N_p$ can the written as stated in (\ref{write}). Uniqueness follows from the fact that 
$|G_p/N_p|=p^{7h+3n}$. The remaining statements about $W_p$ now follow from Theorem \ref{dos} and $|W_p|=p^{7h+4n}$.
\end{proof}

\medskip

\noindent{\bf The case $h=0$.} We assume that $h=0$ throughout this subsection, that is, $\alpha\not\equiv 1\mod p$.
%Note that since $\alpha^\gamma\equiv 1\mod p^m$, then necessarily $\gamma>1$. In fact, as seen below $k$, and hence $\gamma=p^n k$, 
%is a multiple of the order of $\alpha$ modulo $p$, which must be larger than one.

\begin{prop}\label{h0z2} Set $U=\langle a^{p^{2m}}, c^{p^m}, a^{-p^m(\alpha-1)} b^{p^m(\mu-1)}\rangle$.
Then $U\subseteq N_p$ and $U$ has order $p^{3m}$.
\end{prop}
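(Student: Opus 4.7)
The plan is to work inside $Z_3(J)$, which is abelian by \cite[Proposition 9.1]{MS} and whose structure is given by Proposition \ref{nuevoandrea}, and to extract each of the three generators of $U$ from the single element $V_1\in N_p$ by exploiting that, when $h=0$, each of $\alpha-1$, $\mu-1$, and $\ell$ is a unit modulo $p$.

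To compute $|U|$, I would write each generator in the direct-sum basis $(a^{p^m},\,a^{p^m}b^{p^m},\,c^{p^m})$ of $Z_3(J)\cong C_{p^{2m}}\times C_{p^m}\times C_{p^m}$. A short calculation gives coordinates $(p^m,0,0)$ for $a^{p^{2m}}$, $(0,0,1)$ for $c^{p^m}$, and $(-(\alpha-1)-(\mu-1),\,\mu-1,\,0)$ for $a^{-p^m(\alpha-1)}b^{p^m(\mu-1)}$. The key arithmetic input is
\[
(\alpha-1)+(\mu-1)\equiv -(\alpha-1)(\mu-1)\pmod{p^{3m}},
\]
obtained by expanding $(1-\alpha)(1-\mu)$ and using $\alpha\mu\equiv 1\pmod{p^{3m}}$. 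Under $h=0$ both $\alpha-1$ and $\mu-1$ are units mod $p$, so the first two coordinates of the third generator are units mod $p$. Consequently the third generator already has order $p^{2m}$ and its $p^m$-th power is a unit multiple of $a^{p^{2m}}$; since $\langle c^{p^m}\rangle$ contributes an independent $C_{p^m}$ in the third factor, $|U|=p^{2m}\cdot p^m=p^{3m}$.

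For $U\subseteq N_p$, I would extract the three generators from $V_1$ in three steps, always working in the abelian group $Z_3(J)$. \emph{Step (a):} by \eqref{vi}, the first coordinate of $V_1$ in the basis above is congruent to $-(\ell/2)((\alpha-1)+(\mu-1))$ modulo $p^m$, hence a unit mod $p$. Raising $V_1$ to the $p^m$-th power kills the last two coordinates and produces a generator of $Z(J)=\langle a^{p^{2m}}\rangle$, so $Z(J)\subseteq N_p$. \emph{Step (b):} a commutator calculation with Theorem \ref{thm.commutators.J} (exactly as in the proof of Proposition \ref{nor1}) gives $V_1^a V_1^{-1}=c^{-p^m(\ell/2)(\mu-1)}\,x$ with $x\in Z(J)$; since $x\in N_p$ by (a) and $(\ell/2)(\mu-1)$ is a unit mod $p$, we conclude $\langle c^{p^m}\rangle\subseteq N_p$. \emph{Step (c):} choosing $r\equiv 2/\ell\pmod{p^{2m}}$ (possible as $\ell$ and $2$ are units mod $p$), the abelian expansion of $V_1^r$ via \eqref{vi} yields
\[
V_1^{r}=a^{-p^m(\alpha-1)}b^{p^m(\mu-1)}\cdot c^{-(p^m/2)(\alpha-1)(\mu-1)}\cdot z_1^{r};
\]
the two correction factors lie in $\langle c^{p^m}\rangle\subseteq N_p$ and $Z(J)\subseteq N_p$ by (b) and (a), so $a^{-p^m(\alpha-1)}b^{p^m(\mu-1)}\in N_p$ as well.

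The chief obstacle is the bookkeeping of first-coordinate contributions in $Z_3(J)$, given the central correction $z_1$ in \eqref{vi} and the higher-order $\phi,\varphi$ terms appearing in Theorem \ref{thm.commutators.J}; the identity $(\alpha-1)+(\mu-1)\equiv -(\alpha-1)(\mu-1)\pmod{p^{3m}}$ reduces the critical check to verifying that a single coordinate is a unit modulo $p$, after which the rest of the argument is linear algebra in the abelian group $Z_3(J)$.
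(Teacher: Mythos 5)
Your proof is correct, and it reaches the same three conclusions ($Z(J)\subseteq N_p$, $c^{p^m}\in N_p$, and $a^{-p^m(\alpha-1)}b^{p^m(\mu-1)}\in N_p$) that the paper's proof does, but by a partly different route. The paper obtains $a^{p^{2m}}$ by a second commutator, computing $[V_1,a,a]=a^{p^{2m}s}$ with $p\nmid s$ via Theorem \ref{thm.commutators.J}, and then reads off $|U|=p^{3m}$ from the filtration $Z(J)\subset Z_2(J)\subset U$ with $U/Z_2(J)$ cyclic of order $p^m$; you instead get $Z(J)\subseteq N_p$ directly from $V_1^{p^m}$, using that the $\langle a^{p^m}\rangle$-coordinate of $V_1$ is a unit because $(\alpha-1)+(\mu-1)\equiv-(\alpha-1)(\mu-1)\pmod{p^{3m}}$ is prime to $p$ when $h=0$, and you compute $|U|$ by explicit coordinates in the decomposition $Z_3(J)=\langle a^{p^m}\rangle\times\langle a^{p^m}b^{p^m}\rangle\times\langle c^{p^m}\rangle$. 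Your version buys a second application-free use of the commutator formulas (only $[V_1,a]$ is needed, exactly as in Proposition \ref{nor1}) and a transparent order count; its one cost is that it leans on Proposition \ref{nuevoandrea}, which the paper states only in the subsection where $h>0$ is in force --- you should note explicitly that its proof uses only (\ref{funrel})--(\ref{inter2}), the commutativity of $Z_3(J)$, and the orders of $a,b,c$, none of which depend on $h$, so the decomposition remains valid here. The remaining steps (invertibility of $2$, $\ell$, $\alpha-1$, $\mu-1$ modulo $p$, the passage from $V_1$ to $V_1^r$ with $\ell r/2\equiv 1$, and the triviality of $z_1^{p^m}$) are all sound.
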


\begin{proof} By (\ref{vi}), we have
\begin{equation}
\label{generic}
V_1=a^{-(p^m\ell/2) (\alpha-1)} b^{(p^m\ell/2) (\mu-1)}c^{-(p^m \ell/4) (\alpha-1)(\mu-1)}d,
\end{equation}
where $\alpha\not\equiv 1\mod p$, $\mu\not\equiv 1\mod p$, and $d\in Z(J)$. It follows from Theorem \ref{thm.commutators.J} that
$$
[V_1,a]=c^{p^m f}e\in N_p,
$$
where $f\in\Z$ is not a multiple of $p$ and $e\in Z(J)$. A second use of Theorem \ref{thm.commutators.J} reveals that
$$
[V_1,a,a]=a^{p^{2m}s}\in N_p,
$$
where $s\in\Z$ is not a multiple of $p$. It follows that $Z_2(J)=\langle a^{p^{2m}}, c^{p^m}\rangle\subseteq N_p$.
Going back to (\ref{generic}), we infer that $a^{-p^m(\alpha-1)} b^{p^m(\mu-1)}$ is in $N_p$ and has order $p^m$
modulo $Z_2(J)$.
\end{proof}

\begin{cor}\label{clh0} $N_p$ is the normal closure of $\{a^{p^{2m}}, c^{p^m}\}\cup\{a^{-p^m(\alpha^i-1)} b^{p^m (\mu^i-1)}\,|\, i\geq 1\}$
in $J$.
\end{cor}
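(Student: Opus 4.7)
The plan is to establish $K=N_p$ by a two-way inclusion, where $K$ denotes the normal closure in $J$ of the set
$S=\{a^{p^{2m}},c^{p^m}\}\cup\{a^{-p^m(\alpha^i-1)}b^{p^m(\mu^i-1)}\,|\,i\geq 1\}$.

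For $K\subseteq N_p$: Proposition \ref{h0z2} (more precisely, its proof) already shows $Z_2(J)=\langle a^{p^{2m}},c^{p^m}\rangle\subseteq N_p$, which disposes of the first two generators. For $i\geq 1$, I would use formula (\ref{vi}) to write
$$
V_i\equiv a^{-(p^m\ell/2)(\alpha^i-1)}b^{(p^m\ell/2)(\mu^i-1)}\pmod{Z_2(J)},
$$
since the $c$-factor lies in $\langle c^{p^m}\rangle$ and $z_i\in Z(J)$, both of which sit inside $Z_2(J)\subseteq N_p$. Because $Z_3(J)$ is abelian by \cite[Proposition 9.1]{MS}, the displayed right-hand side equals $\bigl(a^{-p^m(\alpha^i-1)}b^{p^m(\mu^i-1)}\bigr)^{\ell/2}$. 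As $V_i\in N_p$, this power lies in $N_p$; since the base has $p$-power order and $\ell/2$ is a unit modulo any $p$-power ($p$ odd, $p\nmid\ell$), the base itself lies in $N_p$.

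For $N_p\subseteq K$: Recall $N_p$ is the normal closure of $\{V_i\,|\,i\geq 1\}$, so it suffices to verify $V_i\in K$ for all $i$. Working inside the abelian group $Z_3(J)$ and reading off (\ref{vi}), the factor $a^{-(p^m\ell/2)(\alpha^i-1)}b^{(p^m\ell/2)(\mu^i-1)}$ is literally the $(\ell/2)$-th power of $a^{-p^m(\alpha^i-1)}b^{p^m(\mu^i-1)}\in S\subseteq K$; the factor $c^{-(p^m\ell/4)(\alpha^i-1)(\mu^i-1)}$ lies in $\langle c^{p^m}\rangle\subseteq K$; and $z_i\in Z(J)=\langle a^{p^{2m}}\rangle\subseteq K$. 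Multiplying shows $V_i\in K$.

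The only non-routine point, and hence the main obstacle, is the exponent-inversion step that extracts $a^{-p^m(\alpha^i-1)}b^{p^m(\mu^i-1)}$ from its $(\ell/2)$-th power modulo $N_p$. This is precisely where the hypotheses of Case 1 pull their weight: the odd prime hypothesis makes $2$ invertible modulo every $p$-power, $p\nmid\ell$ ensures $\ell/2$ is a unit, and the abelianity of $Z_3(J)$ allows the exponent to distribute cleanly across the product of $a$- and $b$-parts. Once these observations are in place, the corollary simply promotes the $i=1$ mechanism of Proposition \ref{h0z2} to a uniform statement covering all $i\geq 1$.
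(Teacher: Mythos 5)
Your proof is correct and follows exactly the route the paper intends: the paper leaves the corollary unproved as an immediate consequence of Proposition \ref{h0z2} (which gives $Z_2(J)=\langle a^{p^{2m}},c^{p^m}\rangle\subseteq N_p$) together with formula (\ref{vi}), the abelianity of $Z_3(J)$, and the invertibility of $\ell/2$ modulo $p$-powers. Your two-way inclusion merely makes that implicit argument explicit, so there is nothing to add.
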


\begin{theorem}\label{hzero} Suppose that $\alpha\not\equiv \pm 1\mod p$. Then $N_p=Z_3(J)$ and
$G_p/N_p\cong\mathrm{Heis}(\Z/p^m\Z)$, the Heisenberg group over $\Z/p^m\Z$, so  $G_p/N_p$
has order $p^{3m}$ and is generated by elements
$a_0$, $b_0$, and $c_0$ subject to the defining
relations
\begin{equation}\label{reze}
c_0=[a_0,b_0], a_0^{c_0}=a_0, b_0^{c_0}=b_0, a_0^{p^{m}}=b_0^{p^{m}}=c_0^{p^{m}}=1,
\end{equation}
and $W_p$ is a cyclic extension of $G_p/N_p$ of order $p^{3m+n}$ generated by elements $a_0,b_0,c_0,d_0$
with defining relations (\ref{rel2}) and (\ref{reze}).  Moreover, every element of $W_p$ can be written in one and only way in the form
$$
a_0^{e_1} b_0^{e_2}c_0^{e_3}d_0^{e_4},\quad 0\leq e_1,e_2,e_3<p^{m}, 0\leq e_4<p^{n}.
$$
Furthermore, the automorphism $a\leftrightarrow b$, $c\leftrightarrow c^{-1}$ of $G_p$ induces an automorphism $a_0\leftrightarrow b_0$, 
$c_0\leftrightarrow c_0^{-1}$ on $G_p/N_p$, which extends to an automorphism of $W_p$ via $d_0\leftrightarrow d_0^{-1}$.

\end{theorem}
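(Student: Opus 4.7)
The plan is to upgrade Proposition~\ref{h0z2} to the equality $N_p=Z_3(J)$ and then read off the remaining structural data. By Proposition~\ref{nuevoandrea}, $|Z_3(J)|=p^{4m}$; by Proposition~\ref{h0z2}, $U$ has order $p^{3m}$ and sits inside $N_p$; and by Proposition~\ref{adentro}, $N_p\subseteq Z_3(J)$. It therefore suffices to exhibit one element of $N_p$ whose image generates the quotient $Z_3(J)/U$, which has order $p^m$.

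Writing $\bar a,\bar b$ for the images of $a^{p^m}, b^{p^m}$ in $Z_3(J)/U$, the hypothesis $\alpha\not\equiv 1\pmod p$ together with $\alpha\mu\equiv 1\pmod p$ makes $\alpha-1$ and $\mu-1$ units modulo $p^m$. The containment $a^{-p^m(\alpha-1)}b^{p^m(\mu-1)}\in U$ then yields $\bar a = \bar b^{(\mu-1)/(\alpha-1)}$, so $Z_3(J)/U=\langle\bar b\rangle$ is cyclic of order exactly $p^m$.

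The central calculation is to reduce $V_2$ modulo $U$. Using~(\ref{vi}) together with $z_i\in Z(J)\subseteq U$ and $c^{p^m}\in U$, one has $V_2\equiv a^{-p^m\ell(\alpha^2-1)/2}b^{p^m\ell(\mu^2-1)/2}\pmod{U}$. Factoring $\alpha^2-1=(\alpha-1)(\alpha+1)$, $\mu^2-1=(\mu-1)(\mu+1)$ and substituting $\bar a^{\alpha-1}=\bar b^{\mu-1}$ yields
\[
V_2\equiv \bar b^{\,\ell(\mu-1)(\mu-\alpha)/2}\pmod{U}.
\]
Since $\mu-\alpha=\alpha^{-1}(1-\alpha^2)$, the hypothesis $\alpha\not\equiv\pm 1\pmod p$ makes $\mu-\alpha$ a unit mod $p$; $\mu-1$ is also a unit mod $p$; $\ell$ is a unit; and $p$ is odd. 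Hence the exponent is a unit modulo $p^m$, so the image of $V_2$ generates $Z_3(J)/U$, forcing $N_p=Z_3(J)$. This reduction, and in particular the appearance of the factor $\mu-\alpha$ whose invertibility requires precisely $\alpha\not\equiv -1\pmod p$, is the main obstacle: without it $V_2$ would collapse back into $U$.

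The remaining claims are bookkeeping. In $G_p/N_p=G_p/Z_3(J)$ one has $a_0^{p^m}=b_0^{p^m}=c_0^{p^m}=1$, and since $\alpha^\gamma=1+p^m\ell$, the conjugation relations collapse to $a_0^{c_0}=a_0$ and ${}^{c_0}b_0=b_0$, giving the Heisenberg presentation~(\ref{reze}) of order $p^{3m}$. Theorem~\ref{dos} then produces $W_p$ of order $p^{3m+n}$ with generators $a_0,b_0,c_0,d_0$ subject to~(\ref{reze}) and~(\ref{rel2}); the stated normal form follows from the order count and the fact that $d_0$ has order $p^n$ modulo $\langle a_0,b_0\rangle$. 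For the automorphism, the involution of $G_p$ stabilizes the characteristic subgroup $N_p=Z_3(J)$, hence descends to $G_p/N_p$; extending by $d_0\leftrightarrow d_0^{-1}$ then swaps the first two relations of~(\ref{rel2}) and preserves the third.
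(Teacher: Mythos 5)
Your proof is correct and follows essentially the same route as the paper: both start from Proposition~\ref{h0z2} and the expression~(\ref{vi}) for $V_1$ and $V_2$, and both come down to the invertibility mod $p$ of $(\mu-1)(\alpha-\mu)$ (equivalently, the paper's determinant $(\alpha-1)(\mu-1)(\alpha-\mu)$), which is exactly where $\alpha\not\equiv\pm 1\bmod p$ enters. Your computation of $V_2$ in the cyclic quotient $Z_3(J)/U$ is just a row-reduced version of the paper's rank-two argument in $Z_3(J)/Z_2(J)$, and the remaining bookkeeping matches the paper's.
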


\begin{proof} We know from Propositions \ref{adentro} and \ref{h0z2} that $Z_2(J)\subseteq N_p\subseteq Z_3(J)$.
Here $Z_3(J)/Z_2(J)$ is a free module of rank 2 over $\Z/\Z_{p^m}$ with basis $\{a^{p^m}Z_2(J), b^{p^m}Z_2(J)\}$.
Applying (\ref{vi}) with $i=1$ and $i=2$ and appealing to Proposition \ref{h0z2}, we see that
$$
a^{-p^m(\alpha-1)} b^{p^m(\mu-1)}\in N_p,\; a^{-p^m(\alpha^2-1)} b^{p^m(\mu^2-1)}\in N_p.
$$
The determinant of the matrix
$$
\left(\begin{matrix} -(\alpha-1) & \mu-1 \\ -(\alpha^2-1) & \mu^2-1
\end{matrix}\right)
$$
is $(\alpha-1)(\mu-1)(\alpha-\mu)$, which is relatively prime to $p$, as $\alpha\equiv\mu\mod p$ leads to
the contradiction $\alpha^2\equiv 1\mod p$. It follows that 
$$
\{a^{-p^m(\alpha-1)}b^{p^m(\mu-1)}Z_2(J), a^{-p^m(\alpha^2-1)} b^{p^m(\mu^2-1)} Z_2(J)\}
$$
is a $\Z/\Z_{p^m}$-basis of $Z_3(J)/Z_2(J)$, which forces $N_p=Z_3(J)$. That $G_p/Z_3(J)\cong\mathrm{Heis}(\Z/p^m\Z)$ 
was proven in \cite[Proposition 9.1]{MS}.
\end{proof}

%{\color{blue} Since $\alpha^\gamma=1+p^m\ell$, with $m\geq 1$ and $p\nmid\ell$, and $\gamma=p^n k$,
%with $n\geq 0$ and $p\nmid k$, it follows
%that $\alpha^{p^n k}\equiv 1\mod p$. By Fermat's little theorem, $\alpha^{p^n}\equiv \alpha\mod p$,
%so $\alpha^k\equiv 1\mod p$. As $\alpha\not\equiv 1\mod p$,
%the order of $\alpha$ modulo $p$, say $k_0$, must satisfy $k_0>1$, $k_0\mid k$, and $k_0\mid (p-1)$.
%We thus have $\alpha^{k_0}=1+p^r v$ for some $r\geq 1$ and $v\in\Z$, and
%$k=k_0 s$ for some integer $s$. 

%Therefore 
%$$
%1+p^m \ell = \alpha^{\gamma}=\alpha^{p^n k_0 s}=(1+p^r v)^{p^n k_0 s}.
%$$
%Here $p\nmid k$, so $m=r+n$. As $r\geq 1$, we deduce $n\leq m-1$. 
%By Theorem \ref{hzero}, conjugation by $z_0$ on $\langle a_0\rangle$
%has order $p^n$, which implies that $p^n$ must imbed into the automorphism group of $\langle a_0\rangle$,
%namely $C_{p^{m-1}}(p-1)$. This is correct, as $n\leq m-1$. We should verify independently of the above work
%that the defining relations of $W_p$ do yield a group of order $p^{3m+n}$ by starting with the copy $\langle a_0,b_0\rangle$
%of $\mathrm{Heis}(\Z/p^m\Z)$ and see that the assignment
%$$
%a_0\mapsto a_0^{\alpha^{qk}},\; b_0\mapsto b_0^{\mu^{qk}}
%$$
%satisfies the hypotheses of Theorem \ref{z}, thus producing a group of order $p^{3m+n}$ 
%that satisfies all defining relations of $W_p$.
%}
 
Assume that $\alpha\equiv -1\mod p$, say $\alpha=-1+p^{h_0} t_0$,
where $h_0>0$ and $p\nmid t_0$. As $\alpha^\gamma\equiv 1\mod p^m$, we see that $\gamma=2\gamma_0$
for some $\gamma_0>0$, so
$$
m=v_p(\alpha^\gamma-1)=v_p(\alpha^{2\gamma_0}-1)=h_0+n,
$$
given that $v_p(\alpha^2-1)=h_0$ and $v_p(\gamma_0)=n$. 

\begin{theorem}\label{hzero2} Suppose that $\alpha\equiv -1\mod p$. Then
$$
N_p=\langle c^{p^m}, a^{-p^m(\alpha-1)} b^{p^m(\mu-1)}, a^{p^{m+h_0}}\rangle=
\langle c^{p^m}, a^{-p^m(\alpha-1)} b^{p^m(\mu-1)}, b^{p^{m+h_0}}\rangle
$$
has order $p^{4m-h_0}$, while $|G_p/N_p|=p^{3m+h_0}$ and $|W_p|=p^{4m}$. Moreover,
$G_p/N_p$ is generated by elements $a_0,b_0,c_0$ subject to defining relations (\ref{rel0}) and
\begin{equation}
\label{rel10}
a_0^{p^{m+h_0}}=1, a_0^{p^{m}(\alpha-1)}=b_0^{p^{m}(\mu-1)}, b_0^{p^{m+h_0}}=1, c_0^{p^{m}}=1,
\end{equation}
and every element of $G_p/N_p$ can be written in one and only way in the form
$$
a_0^{e_1} b_0^{e_2}c_0^{e_3},\quad 0\leq e_1<p^{m+h_0}, 0\leq e_2<p^{m}, 0\leq e_3<p^{m};
$$
$W_p$ is generated by elements $a_0,b_0,c_0,d_0$ subject to the defining relations (\ref{rel0}), (\ref{rel2}),
and (\ref{rel10}), and every element of $W_p$ can be written in one and only way in the form
$$
a_0^{e_1} b_0^{e_2}c_0^{e_3}d_0^{e_4},\quad 0\leq e_1<p^{m+h_0}, 0\leq e_2<p^{m}, 0\leq e_3<p^{m}, 0\leq e_4<p^{n}.
$$
Moreover, the automorphism $a\leftrightarrow b$, $c\leftrightarrow c^{-1}$ of $G_p$ induces an automorphism $a_0\leftrightarrow b_0$, 
$c_0\leftrightarrow c_0^{-1}$ on $G_p/N_p$, which extends to an automorphism of $W_p$ via $d_0\leftrightarrow d_0^{-1}$.
\end{theorem}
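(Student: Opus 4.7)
The plan is to determine $N_p$ as a submodule of the free $R$-module $Z_3(J)/Z_2(J)$, where $R = \Z/p^m\Z$. By Proposition \ref{adentro} and Proposition \ref{h0z2} we have $Z_2(J)\subseteq N_p\subseteq Z_3(J)$, and $Z_3(J)/Z_2(J)$ is free of rank $2$ over $R$ with basis $\{\bar a,\bar b\}=\{a^{p^m}Z_2(J),b^{p^m}Z_2(J)\}$. In these coordinates formula \eqref{vi} shows that the image of $V_i$ is the scalar multiple $(\ell/2)(-(\alpha^i-1),\mu^i-1)$, and Proposition \ref{h0z2} puts $v_1:=(-(\alpha-1),\mu-1)$ in $N_p/Z_2(J)$. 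The goal is to prove $N_p/Z_2(J)=\Lambda$, where $\Lambda:=\langle v_1,(p^{h_0},0)\rangle$.

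For the inclusion $N_p/Z_2(J)\subseteq\Lambda$, I would use the identity
\[
v_i-\frac{\mu^i-1}{\mu-1}\,v_1=(\alpha-1)\Bigl(\sum_{j=0}^{i-1}(\mu^j-\alpha^j)\Bigr)\bar a,
\]
together with $\mu-\alpha\equiv-\mu(\alpha-1)(\alpha+1)\pmod{p^m}$, which gives $v_p(\mu-\alpha)=h_0$ and hence $v_p(\mu^j-\alpha^j)\geq h_0$ for all $j$, so the right-hand side lies in $R\cdot(p^{h_0},0)$. Conversely, the same valuation fact feeds the reverse inclusion: $v_2-(\mu+1)v_1=(\alpha-1)(\mu-\alpha)\bar a$ has $\bar a$-coefficient equal to a unit times $p^{h_0}$, so $(p^{h_0},0)\in N_p/Z_2(J)$, i.e.\ $a^{p^{m+h_0}}\in N_p$. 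Combined with Proposition \ref{h0z2}, this gives $N_p=\langle c^{p^m},a^{-p^m(\alpha-1)}b^{p^m(\mu-1)},a^{p^{m+h_0}}\rangle$. The alternative description using $b^{p^{m+h_0}}$ follows because $\alpha-1\equiv\mu-1\equiv-2\pmod{p^{h_0}}$ forces $(0,p^{h_0})\in\Lambda$.

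The order of $\Lambda$ then follows by Smith normal form on the $2\times 2$ presentation matrix: the gcd of entries is $1$ and the $2\times 2$ minor has valuation $h_0$, giving invariant factors $1$ and $p^{h_0}$, so $|\Lambda|=p^{2m-h_0}$ and $|N_p|=|Z_2(J)|\cdot|\Lambda|=p^{4m-h_0}$. Hence $|G_p/N_p|=p^{7m}/p^{4m-h_0}=p^{3m+h_0}$ and, via Theorem \ref{dos}, $|W_p|=p^n|G_p/N_p|=p^{4m}$ using $n=m-h_0$. The presentation \eqref{rel0}, \eqref{rel10} of $G_p/N_p$ is inherited from \eqref{relgp} by quotienting by the three generators of $N_p$ (noting that $a_0^{p^{m+h_0}}=1$ absorbs $a_0^{p^{2m}}=1$), and the presentation of $W_p$ adds the cyclic-extension relations \eqref{rel2} from Theorem \ref{dos}. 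The stated normal forms are forced by the orders together with the fact from \cite[Theorem 7.1]{MS} that every element of $G_p$ has a unique expression $a^{e_1}b^{e_2}c^{e_3}$. Finally, the symmetric description of $N_p$ makes it manifest that $a\leftrightarrow b,\,c\leftrightarrow c^{-1}$ preserves $N_p$ and hence descends to $G_p/N_p$; its extension to $W_p$ via $d_0\leftrightarrow d_0^{-1}$ is verified exactly as in Theorem \ref{main1}.

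The hard part is step 2, where the exact valuation $v_p(\mu-\alpha)=h_0$ must do double duty: it caps $N_p/Z_2(J)$ inside $\Lambda$ via the identity for $v_i-\tfrac{\mu^i-1}{\mu-1}v_1$, and simultaneously realizes $(p^{h_0},0)$ inside $N_p/Z_2(J)$ via the single computation $v_2-(\mu+1)v_1$. All remaining steps are bookkeeping with presentations, orders, and the already-established symmetry.
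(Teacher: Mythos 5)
Your proposal is correct and follows essentially the same route as the paper: after Propositions \ref{adentro} and \ref{h0z2} pin $N_p$ between $Z_2(J)$ and $Z_3(J)$, your identity $v_i-\tfrac{\mu^i-1}{\mu-1}v_1=(\alpha-1)w_i\bar a$ and the instance $i=2$ with $v_p(\mu-\alpha)=h_0$ are exactly the paper's equations (\ref{vui})--(\ref{vui2}) applied via Corollary \ref{clh0}, just rewritten in coordinates on the free $\Z/p^m\Z$-module $Z_3(J)/Z_2(J)$. The Smith-normal-form order count and the remark that the swap $a\leftrightarrow b$ preserves $N_p$ (which still needs the one-line check that the image of the middle generator differs from its inverse by an element of $\langle a^{p^{m+h_0}},b^{p^{m+h_0}}\rangle$) are only cosmetic refinements of what the paper leaves as ``easily seen.''
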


\begin{proof} Note first of all that $\langle c^{p^m}, a^{-p^m(\alpha-1)} b^{p^m(\mu-1)}, a^{p^{m+h_0}}\rangle$
is a normal subgroup of $J$, being an intermediate subgroup between $Z_2(J)$ and $Z_3(J)$. %We know that
%$\langle a^{p^{2m}}, c^{p^m}, a^{-p^m(\alpha-1)} b^{p^m(\mu-1)}\rangle$ is included in $N_p$ by
%Proposition \ref{h0z2}. 

From $\alpha=-1+p^{h_0} t_0=- (1-p^{h_0} t_0)$, we deduce $\mu=-(1+p^{h_0}t_0+p^{2h_0} u)$ for some $u\in\Z$ relatively
prime to $p$. Using $v_p(\alpha-1)=0=v_p(\mu-1)$ and $v_p(\mu-\alpha)=h_0$, we see from (\ref{vui}) that
$a^{p^{m+h_0}},b^{p^{m+h_0}}\in N_p$. As $w_2=\mu-\alpha$ is a factor of every $w_i$, we deduce from (\ref{vui2})
and Corollary~\ref{clh0} that $\langle c^{p^m}, a^{-p^m(\alpha-1)} b^{p^m(\mu-1)}, a^{p^{m+h_0}}\rangle=N_p=
\langle c^{p^m}, a^{-p^m(\alpha-1)} b^{p^m(\mu-1)}, b^{p^{m+h_0}}\rangle$. 
The orders of $N_p$, $G_p/N_p$, and $W_p$ are easily seen to be correct.
\end{proof}

\section{Witt's Formula}\label{Witt}

According to \cite[Theorem 5.1]{MKS} the rank of the free $\mathbb Z$-module $\gamma_w(F)/\gamma_{w+1}(F)$ for the free
group $F=F\{x_1,\dots,x_r\}$ of rank $r$ is given by 
$$
\frac{1}{w}\sum_{d|w}\mu(\frac{w}{d})r^d,
$$
where $\mu$ denotes the M\"obius function. More explicitly the previous sum counts the number of basic commutators in $F$ of weight $w$ (see {\cite[Ch. 11]{H}).
A more refined version provides the number of such commutators where each free generator $x_i$ occurs $w_i$ times. The result is
$$
\frac{1}{w}\sum_{d|w_i}\mu(d)\frac{(\frac{w}{d})!}{\prod_{i=1}^{r}(\frac{w_i}{d})!},
$$
where $w=\sum_{i=1}^{r}w_i$, $w_i\ge 0$ and $d$ runs over all common divisors of $w_1,\ldots,w_r$.

In particular we are interested in the case $r=2$ and $1\le w\le 5$, since apart from Case $3$ the nilpotency class 
of the Sylow $p$-subgroup $J=G_p$ of the Macdonald group $G(\alpha^\gamma,\alpha^\gamma)$ is $5$.

Applying Witt's Formula we obtain that the ranks of each section are:
$$
2,1,2,3,6.
$$

For $F=F\{x,y\}$, we list here the basic commutators in $x,y$ keeping the ordering exploited in Magma (this ordering is quite arbitrary)
$$
\begin{array}{|r|l|}
  \hline
  \text{Weight}&\text{Basic Commutators}\\
  \hline
  1 & x,y \\
  2 & [y,x] \\
  3 & [y,x,x], [y,x,y] \\
  4 & [y,x,x,x],[y,x,y,x], [y,x,y,y] \\
  5 & [y,x,x,x,x], [y,x,x,x,y],[y,x,y,x,x],[y,x,x,y,y],[y,x,y,y,x], [y,x,y,y,y]\\
  \hline
\end{array}
$$

Assume we are in Case 1, set $r=p^m$, where $m$ is the $p$-valuation of $\alpha^\gamma -1$. Imposing the defining
relations of $J$ we get in an essential way the following basic commutators. Since we work modulo the next term of the lower central series,
given $u,v\in\gamma_i$ we denote with $u\equiv v$ that $uv^{-1}\in\gamma_{i+1}$.
Let $a,b$ denote the generators of $J$ defined in Section \ref{s2}. Then $[b,a]=c^{-1}$ which we may substitute with $c$ as generators modulo $\gamma_3$ of $\gamma_2$.
Next $[b,a,a]=[c^{-1},a]\equiv [c,a]^{-1}=[a,c]=a^{-1}a^c=a^{r\ell}$, $p\nmid \ell$, which we may substitute $a^r$. Similarly from $[b,a,b]$ we obtain $b^r$.
In weight $4$, $[b,a,a,a]$ leads to $[a^r,b]\equiv c^r$, $[a,b,a,b]$ leads to $[a^r,b]\equiv [a,b]^r=c^r$ and $[b,a,b,b]\equiv 1$.
Finally, in weight~$5$, $[b,a,a,a,a]$ leads to $[c^r,a]\equiv [c,a]^r$ and to $a^{r^2}$, $[b,a,a,a,b]$ leads to $[c^r,b]\equiv [c,b]^r$ and to $b^{r^2}$,
$[b,a,a,b,b]$ leads to $[a^r,b,b]\equiv [c^r,b]$ and again $b^{r^2}$, $[b,a,b,b,a]$ and $[b,a,b,b,b]$ are both equivalent to $1$.
Since $a^{r^2}=b^{-r^2}$ we obtain the following table

$$
\begin{array}{|r|l|}
  \hline
  \text{Lower Term}&\text{Basic Generators}\\
  \hline
  \gamma_1 & a,b \\
  \gamma_2 & c \\
  \gamma_3 & a^r,b^r \\
  \gamma_4 & c^r \\
  \gamma_5 & a^{r^2}\\
  \hline
\end{array}
$$
and each section is isomorphic to $C_r\times C_r$, $C_r$, $C_r\times C_r$, $C_r$, $C_r$, respectively. Moreover since $\gamma_6=1$, $\gamma_3$ is abelian and equals 
$
\langle a^r\rangle\times \langle (ab)^r\rangle \times \langle c^r\rangle\simeq C_{r^2}\times C_r\times C_r.
$

\section{An altentative proof of the commutator formula}\label{andrea}

A new approach to the study of the Sylow $p$-subgroup $J$ of the Macdonald group $G(\alpha^\gamma,\alpha^\gamma)$ exploits 
a recent algorithm due to Eick and Cant (see \cite{CE}). Any finitely generated nilpotent torsion-free group $T$
admits a composition series 
$$
T=T_1\unrhd T_2\unrhd \ldots\unrhd T_n \unrhd T_{n+1}=1
$$
with $T_i/T_{i+1}$ infinite cyclic. The integer $n$ is uniquely determined and known under the name of \emph{Hirsch length}.
Given $a_i$'s such that $a_iT_{i+1}$ generates $T_i/T_{i+1}$, the structure of $T$ is uniquely determined by a cubic array 
$t\in \Z^{\binom{n}{3}}$ so that for $1\le i<j<k\le n$
$$
a_ja_i=a_ia_ja_{j+1}^{t_{ijj+1}}\cdots a_k^{t_{ijk}}\cdots a_n^{t_{ijn}}.
$$
One non-trivial task is to determine if a given array $t$ really defines a group of Hirsch length $n$.
Suppose for the remainder of this section that we are in Case 1. Then $J$ has nilpotency class $5$ and admits a series of subgroups
$$
J=J_1\unrhd J_2\ldots J_7\unrhd J_8=1
$$
with $J_i/J_{i+1}\simeq C_r$ cyclic of order $r=p^m$, where $v_p(\alpha^\gamma-1)=m$.
The hope is to be able to find a universal cover $T$ of $J$ which has the same commutator relations valid in $J$ independently from 
$\alpha,\gamma$ and $r$. We assume that $T$ is generated by two elements $a_1,a_2$ and set 
\begin{equation}\label{DefElts}
a_3=[a_2,a_1], a_4=[a_3,a_1], a_5=[a_3,a_2], a_6=[a_5,a_1], a_7=[a_6,a_2].
\end{equation}

From the previous paragraph we know that the free nilpotent group $F$ of rank $2$ and 
nilpotency class $5$ has lower central series sections of ranks $2,1,2,3$ and $6$. Hence its Hirsch length is $14$.
We need to force relations in $T$ emulating the fact that the lower central series sections of $J$ have ranks $2,1,2,1$ and $1$. So we need to kill
two basic commutators in weight $4$ and $5$ of them in weight $5$. In order to justify the previous definitions for the elements in $T$ we start by 
working out the elements corresponding to the $a_i$'s when we substitute $a_1,a_2$ with $a,b$, namely:

\begin{equation}\label{DefElts2}
a_3=[b,a], a_4=[b,a,a], a_5=[b,a,b], a_6=[b,a,b,a], a_7=[b,a,b,a,b].
\end{equation}

We recall that $\alpha^\gamma=1+r\ell$, where $r=p^m$ and $p\nmid \ell$. Let $v=-\ell+r\ell^2$ and $\mu=1+rv=1-r\ell+r^2\ell^2$.
Then $\alpha\mu\equiv 1\mod r^3$. For simplicity, we also set $\alpha_0=\alpha^\gamma$ and $\mu_0=\mu^\gamma$.

\begin{lemma}
  Given $i\in\Z$, $a^{b^i}$ equals $ab^{h_i}c^i$ for a suitable function $h$ exponential in $i$.
\end{lemma}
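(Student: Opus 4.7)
The plan is to argue by induction on $|i|$. For $i=0$ the formula is trivial with $h_0=0$, and for $i=1$ the identity $c=[a,b]=a^{-1}b^{-1}ab$ gives $a^b=b^{-1}ab=a\cdot a^{-1}b^{-1}ab=ac$, so $h_1=0$. Two commutation rules from the Macdonald relations of $J$, both valid in $T$, do all the work in the inductive step: (i) $a^b=ac$, and (ii) $cb=b^{\alpha^\gamma}c$, the latter coming from ${}^cb=b^{\alpha^\gamma}$. Iterating (ii) yields $cb^k=b^{k\alpha^\gamma}c$ and, by a second induction on $n$, $c^nb=b^{(\alpha^\gamma)^n}c^n$, from which $b^{-1}c^nb=b^{(\alpha^\gamma)^n-1}c^n$.

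For the inductive step at $i\ge 1$, assume $a^{b^i}=ab^{h_i}c^i$ and conjugate by $b$:
\[
a^{b^{i+1}}=b^{-1}(ab^{h_i}c^i)b=(ac)\cdot b^{h_i}\cdot b^{(\alpha^\gamma)^i-1}c^i=a\cdot cb^{h_i+(\alpha^\gamma)^i-1}\cdot c^i=a\cdot b^{\alpha^\gamma(h_i+(\alpha^\gamma)^i-1)}\cdot c^{i+1},
\]
where the last equality moves the internal $c$ past the $b$-power using $cb^k=b^{k\alpha^\gamma}c$. This produces the first-order linear recurrence
\[
h_{i+1}=\alpha^\gamma\, h_i+(\alpha^\gamma)^{i+1}-\alpha^\gamma,
\]
whose closed-form solution with $h_0=0$ is
\[
h_i=i(\alpha^\gamma)^i-\frac{\alpha^\gamma\bigl((\alpha^\gamma)^i-1\bigr)}{\alpha^\gamma-1},
\]
whose leading term $i(\alpha^\gamma)^i$ is manifestly exponential in $i$, as asserted. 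For negative $i$, run the same induction using $b^{-1}$ in place of $b$; since $\alpha^\gamma$ is coprime to the order of $b$ (recorded at the start of Section \ref{s2}), the analogous commutation rules are available and the argument proceeds symmetrically.

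The main technical obstacle is that rule (ii) should iterate \emph{cleanly} in the universal cover $T$. A priori, $cb=b^{\alpha^\gamma}c$ might hold in $T$ only modulo correction by the higher basic commutators $a_5=[a_3,a_2]$, $a_6=[a_5,a_1]$, and $a_7=[a_6,a_2]$, and each iteration $c^nb^k=b^{k(\alpha^\gamma)^n}c^n$ could accumulate such corrections. One has to verify that the additional relations imposed on $T$ to force its lower-central-series ranks to match those of $J$ (namely $2,1,2,1,1$ in Case~1, as derived via Witt's formula in Section~\ref{Witt}) kill these corrections in $T$; equivalently, one can argue the formula modulo each successive term of the composition series $T=T_1\unrhd T_2\unrhd\cdots\unrhd T_8=1$, and invoke the Cant--Eick certification of $T$ as a consistent nilpotent group of Hirsch length $7$. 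Once this book-keeping is in place, the rest of the proof is the mechanical telescoping displayed above.
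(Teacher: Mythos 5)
Your proof is correct and follows essentially the same route as the paper: the same induction on $i$, the same first-order recurrence $h_{i+1}=\alpha^\gamma h_i+(\alpha^\gamma)^{i+1}-\alpha^\gamma$ extracted from $a^b=ac$ and $cb=b^{\alpha^\gamma}c$, and a closed form that is algebraically identical to the paper's $f_i=\bigl((i-1)\alpha_0^{i+1}-i\alpha_0^i+\alpha_0\bigr)/(\alpha_0-1)$ with $\alpha_0=\alpha^\gamma$ (the paper simply also works out the negative-$i$ recurrence $g_i$ explicitly and then reduces both modulo $r^3$). The only superfluous part is your final paragraph: the lemma is a computation inside $J$, where ${}^{c}b=b^{\alpha^\gamma}$ is an exact defining relation, so no corrections by higher basic commutators of the torsion-free cover $T$ arise and no Cant--Eick certification is needed at this stage.
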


\begin{proof}\label{conj-a-by-b-powers}
  We first pin down $a^{b^i}$  for non-negative $i$. Clearly $a^{b^0}=a$ and $a^b=ac$. Assume that
 $$
 a^{b^i}=ab^{f_i}c^i.
 $$
 Then 
 $$
 a^{b^{i+1}}=a^bb^{f_i}(c^i)^b=acb^{f_i-1}c^{-1}c^{i+1}bc^{-(i+1)}c^{i+1}=ab^{\alpha_0(f_i-1) +\alpha_0^{i+1}}c^{i+1}.
 $$
 Therefore $f_{i+1}=\alpha_0(f_i-1) +\alpha_0^{i+1}$ is the solution of a first degree difference equation whose formal solution is
 $$
 f_i=\frac{(i-1)\alpha_0^{i+1}-i\alpha_0^i+\alpha_0}{\alpha_0-1}.
 $$
 We determine $f_i$ as a residue class mod $r^3$. Collecting $\alpha_0^i$, the numerator modulo $r^4$ becomes 
 $$
 (1+ir\ell+\binom{i}{2}r^2\ell^2+\binom{i}{3}r^3\ell^3)(-1+(i-1)r\ell)+1+r\ell\equiv
 \binom{i}{2}r^2\ell^2+\left(2\binom{i}{3}+\binom{i}{2}\right)r^3\ell^3\mod r^4.
 $$
Thus 
 $$
 f_i\equiv\binom{i}{2}r\ell+\left(2\binom{i}{3}+\binom{i}{2}\right)r^2\ell^2+\left(4 \binom{i}{4}+2 \binom{i}{3}\right)r^{3}\ell^{3}\mod r^3,
 $$
 where the last summand  may be discarded if $p>3$. 
 
 Assume that for non-negative $i$ 
 $$
 a^{b^{-i}}=ab^{g_i}c^{-i}.
 $$
 Conjugation by $b$ yields
 $$
 a^{b^{-i+1}}=acb^{g_i-1}c^{-1}c^{-(i-1)}bc^{i-1}c^{-i+1}=ab^{\alpha_0(g_i-1)+\mu_0^{i-1}}c^{-i+1}.
 $$
 Thus $g_{i-1}=\alpha_0(g_i-1)-\mu_0^{i-1}$ or, equivalently,
 $$
 g_i=\mu_0g_{i-1}-\mu_0^i+1,
 $$
 whose solution is 
 $$
 g_i=\frac{i\mu_0^{i+1}-(i+1)\mu_0^i+1}{1-\mu_0}.
 $$
 Again, as before, we get 
 \begin{equation}\label{E:gg function}
   g_i\equiv -\binom{i+1}{2}vr-2\binom{i+1}{3}v^2r^2-\left(4 \binom{i}{4}+3 \binom{i}{3}\right) r^{3} v^{3}\mod r^3,
 \end{equation}
 where the last summand  may be discarded if $p>3$. 
 \end{proof}
 
 Let $\delta_{a,b}$ denote the Kronecker delta, namely $\delta_{a,b}$ is $1$ if $a=b$, $0$ otherwise.
 We now proceed and determine explicitly the elements $a_i$ defined in (\ref{DefElts2}).

\begin{theorem}\label{Basic Commutators}
Set $a_1=a$ and $a_2=b$. Then the following identities hold in $J$: $a_3=[b,a]=c^{-1}$, $a_4=[b,a,a]=a^{-rv}$, $a_5=[b,a,b]=b^{-r\ell}$,
$a_6=[b,a,b,a]=c^{r\ell}b^{r^2(1-2\delta_{p,3}3^{m-1}\ell)v\ell /2}$
and $a_7=[b,a,b,a,b]=b^{-r^2\ell v}$.
\end{theorem}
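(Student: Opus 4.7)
The plan is to compute $a_3, a_4, a_5, a_6, a_7$ in that order, using only the defining relations of $J$ and elementary commutator arithmetic. First I derive the four master conjugation rules
$$
c^{-1}ac = a^{\alpha_0},\quad cac^{-1} = a^{\mu_0},\quad cbc^{-1} = b^{\alpha_0},\quad c^{-1}bc = b^{\mu_0},
$$
with $\mu_0 = 1+rv$ inverse to $\alpha_0$ modulo $r^3$, the common order of $a$ and $b$; iteration gives $c^{-n}bc^n = b^{\mu_0^n}$ and $b^k c^n = c^n b^{k\mu_0^n}$. The easy identities are one line apiece: $a_3 = [b,a] = c^{-1}$ from $[a,b][b,a] = 1$; then $a_4 = [c^{-1}, a] = ca^{-1}c^{-1}\cdot a = a^{1-\mu_0} = a^{-rv}$, and symmetrically $a_5 = [c^{-1}, b] = b^{1-\alpha_0} = b^{-r\ell}$.

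For $a_6 = [b^{-r\ell}, a]$ I extract from $c = a^{-1}b^{-1}ab$ the rule $b^a = bc^{-1}$, so
$$
a_6 = b^{r\ell}(b^{-r\ell})^a = b^{r\ell}(cb^{-1})^{r\ell}.
$$
A routine induction based on $c^{-1}b^{-1}c = b^{-\mu_0}$ gives $(cb^{-1})^n = c^n b^{-S_n}$ with $S_n := 1 + \mu_0 + \cdots + \mu_0^{n-1}$. Moving $c^{r\ell}$ to the left via $b^k c^n = c^n b^{k\mu_0^n}$ yields $a_6 = c^{r\ell} b^{E}$ with $E := r\ell\,\mu_0^{r\ell} - S_{r\ell}$. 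Rewriting $E$ in the compact form $E = rv\sum_{k=1}^{r\ell} k\mu_0^{k-1}$ via the polynomial identity $nx^{n+1} - (n+1)x^n + 1 = (x-1)^2\sum_{k=1}^{n}kx^{k-1}$, and expanding each $(1+rv)^{k-1}$ binomially with the hockey-stick identity, one obtains
$$
\sum_{k=1}^{r\ell}k\mu_0^{k-1} = \sum_{j\ge 0}(j+1)(rv)^j\binom{r\ell+1}{j+2}.
$$
A $p$-adic valuation analysis shows that, after multiplying by $rv$, only the $j=0$ term contributes modulo $r^3$ when $p > 3$, giving $E \equiv r^2v\ell/2$. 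For $p = 3$ the $j = 1$ term also contributes because $v_3\bigl(\binom{r\ell+1}{3}\bigr) = m - 1$; simplifying this contribution using $v = -\ell + r\ell^2$ produces the correction packaged as the factor $(1 - 2\delta_{p,3}3^{m-1}\ell)$.

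Finally, writing $a_6 = c^{r\ell}b^E$ and using $[b^E,b] = 1$, we have $a_7 = [a_6,b] = [c^{r\ell},b]^{b^E}$. Since $c^{-r\ell}bc^{r\ell} = b^{\mu_0^{r\ell}}$, this gives $[c^{r\ell},b] = b^{1-\mu_0^{r\ell}}$, a pure $b$-power and hence fixed by conjugation by $b^E$. The binomial expansion $\mu_0^{r\ell} \equiv 1 + r^2\ell v \pmod{r^3}$ holds for every prime $p$ (all binomial terms with $k \ge 2$ have valuation $\ge 3m$, even in the $p=3$ case), so $a_7 = b^{-r^2\ell v}$ with no further correction. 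The main obstacle throughout is the reduction of $E$ modulo $r^3$ in the $p=3$ case, where one must notice that $\binom{r\ell+1}{3}$ loses a power of $3$ in its denominator and carefully track the sign and constant of the surviving term.
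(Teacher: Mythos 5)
Your strategy is the same as the paper's: $a_3,a_4,a_5$ in one line each from the conjugation relations, $a_6=[b^{-r\ell},a]$ reduced to an explicit $b$-exponent, and $a_7$ falling out because the $b$-part of $a_6$ is invisible to $[\,\cdot\,,b]$. Your exponent $E=r\ell\,\mu_0^{r\ell}-S_{r\ell}$ is exactly the paper's $-g_{r\ell}$ (the paper gets it by solving the difference equation for $a^{b^{-i}}$ rather than by collapsing $(cb^{-1})^{r\ell}$, but the two are algebraically identical, both equal to $rv\sum_{k=1}^{r\ell}k\mu_0^{k-1}$). Everything you actually write out — the conjugation rules, $(cb^{-1})^n=c^nb^{-S_n}$, the hockey-stick expansion, the valuation argument for $p>3$, and the $a_7$ computation — checks out. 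One cosmetic omission: you should record, as the paper does, that $v=-\ell+r\ell^2$ is even because $r$ is odd, so that $r^2v\ell/2$ is an integer.

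The problem is exactly the step you defer and merely assert: the sign of the $p=3$ correction. The $j=1$ term of $E$ is $2(rv)^2\binom{r\ell+1}{3}=\frac{1}{3}r^3v^2\ell(r^2\ell^2-1)$, and modulo $r^3=3^{3m}$ this equals $3^{3m-1}$ times $v^2\ell(r^2\ell^2-1)\equiv \ell^2\cdot\ell\cdot(-1)\equiv-\ell\equiv v\ell^2\pmod 3$; hence the $j=1$ term is $+3^{m-1}r^2v\ell^2=\frac{r^2v\ell}{2}\cdot\bigl(+2\cdot 3^{m-1}\ell\bigr)$, and a correct reduction gives the factor $\bigl(1+2\delta_{p,3}3^{m-1}\ell\bigr)$, not $\bigl(1-2\delta_{p,3}3^{m-1}\ell\bigr)$. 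Concretely, take $p=3$, $m=1$, $\alpha_0=4$, so $\ell=1$, $v=2$, $\mu_0=7$, $r\ell=3$: then $E=6(1+2\cdot 7+3\cdot 49)=972\equiv 0\pmod{27}$, and a direct computation from $a^{b^{-1}}=ab^{-6}c^{-1}$ gives $a^{b^{-3}}=ac^{-3}$, hence $a_6=[b^{-3},a]=c^3$; the exponent $r^2(1-2\cdot 3^{m-1}\ell)v\ell/2=-9$ is not $\equiv 0\pmod{27}$, while $r^2(1+2\cdot 3^{m-1}\ell)v\ell/2=27\equiv 0$ is. So the factor you claim to obtain (which is the one printed in the statement, and which the paper's own proof reaches through the faulty simplification $2\binom{r\ell+1}{3}v^2r^2\equiv-r^3v\ell^2/3$) has the wrong sign; you need to carry out this reduction explicitly rather than asserting its outcome. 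The value of $a_7$ is unaffected, since the correction sits in the $b$-part of $a_6$.
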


\begin{proof}
By definition, $[b,a]=c^{-1}$, $[b,a,a]=ca^{-1}c^{-1}a=a^{-\mu_0+1}=a^{-rv}$, $[b,a,b]=cb^{-1}c^{-1}b=b^{-\alpha_0+1}=b^{-r\ell}$,
 $
 [b,a,b,a]=[b^{-r\ell},a]=(a^{b^{-r\ell}})^{-1}a=(ab^{g_{r\ell}}c^{-r\ell})^{-1}a=c^{r\ell}b^{-g_{r\ell}}.
 $
By (\ref{E:gg function}),
 $$
 -g_{r\ell}\equiv \binom{r\ell+1}{2}vr+2\binom{r\ell+1}{3}v^2r^2\equiv r^2v\ell/2-r^3v\ell^2/3\mod r^3.
 $$
 For $p>3$ the latter equals $r^2v\ell/2$. For $p=3$ we have an extra summand $-3^{m-1}r^2v\ell^2${\color{red}.}
 
Notice that since $r$ is odd, $v=-\ell+r\ell^2$ is always even, so the previous exponents are always integers. Therefore we deduce that $[b,a,b,a]$ equals
 $c^{r\ell}b^{r^2(1-2\delta_{p,3}3^{m-1}\ell)v\ell /2}$. Finally 
 $$[b,a,b,a,b]=[c^{r\ell}b^{r^2(1-2\delta_{p,3}3^{m-1}\ell)v\ell /2},b].$$
 Now $[c^{r\ell},b]=b^{1-\mu_0^{r\ell}}$. Since
 $$
 1-\mu_0^{r\ell}=1-(1+rv)^{r\ell}\equiv -r^2\ell v\mod r^3,
 $$
 the latter equals $b^{-r^2\ell v}$. Applying the well-known formula $[xy,z]=[x,z]^y[y,z]$ with $x=c^{r\ell}$, $y=b^{r^2\ell v/2}$ and $z=b$ the last claim is proved.
\end{proof}

We need to uncover commutator relations satisfied in $J$ and impose them 
on the free nilpotent group $F$ of rank $2$ and class $5$ in order to 
obtain a torsion-free nilpotent group $T$ of Hirsch length~$7$.
Following the recipe in \cite{CE} we need to determine a $7\times 7\times 7$ (cubic) array $t$ leading to such $T$.
We first set in GAP $t_{ijk}=0$ for all $1\le i,j,k\le 7$. These values have to be modified so that
$$
[a_j,a_i]=a_{j+1}^{t_{ijj+1}}\cdots a_k^{t_{ijk}}\cdots a_n^{t_{ijn}},
$$
for all $1\le i<j<k\le 7$ where the $a_i$'s are defined in (\ref{DefElts}).
The very definition forces
$$
t_{ijk}=1
$$ 
for $(i,j,k)=(1,2,3),(1,3,4),(2,3,5),(1,5,6)$, and $(2,6,7)$.

\begin{theorem}\label{T:tcoeffs}
  The other non-zero entries for $t$ are $t_{ijk}=1$ for $(i,j,k)=(1,6,7),(2,4,6),(3,4,7)$ and $t_{3,5,7}=-1$.
\end{theorem}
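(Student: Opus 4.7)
The plan is to verify the claimed entries of the cubic array $t$ by computing each commutator $[a_j,a_i]$ with $1\le i<j\le 7$ directly in $J$, using the explicit expressions for $a_3,\dots,a_7$ given in Theorem \ref{Basic Commutators} together with the commutator formulas of Theorem \ref{thm.commutators.J}. Assigning weights $1,1,2,3,3,4,5$ to $a_1,\dots,a_7$, any commutator whose total weight exceeds $5$ lies in $\gamma_6(J)=1$. This immediately forces $t_{ijk}=0$ for all pairs $(i,j)$ in $\{(1,7),(2,7),(3,6),(3,7),(4,5),(4,6),(4,7),(5,6),(5,7),(6,7)\}$, leaving only the six pairs $(1,4),(2,5),(3,4),(3,5),(2,4),(1,6)$ to inspect.

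Two of these vanish trivially, since $a_4$ is a power of $a$ and $a_5$ is a power of $b$: $[a_4,a_1]=[a^{-rv},a]=1$ and $[a_5,a_2]=[b^{-r\ell},b]=1$. For $(3,4)$ I would apply \eqref{eq.comj.2} with $i=-1,\,j=-rv$, and for $(3,5)$ apply \eqref{eq.comj.3} with $i=-1,\,j=-r\ell$, obtaining $[a_4,a_3]=a^{r^2\ell v}$ and $[a_5,a_3]=b^{-r^2\ell^2}$ (all higher order terms vanish modulo the orders of $a$ and $b$). The fundamental relation $a^{p^{2m}}b^{p^{2m}}=1$ of (\ref{funrel}), together with the congruence $r^2\ell v\equiv -r^2\ell^2\pmod{r^3}$ coming from $v=-\ell+r\ell^2$, rewrites $a^{r^2\ell v}=b^{-r^2\ell v}=a_7$ and $b^{-r^2\ell^2}=b^{r^2\ell v}=a_7^{-1}$, yielding $t_{3,4,7}=1$ and $t_{3,5,7}=-1$.

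The remaining two pairs are more delicate. For $(1,6)$ I would expand $[a_6,a_1]=[c^{r\ell}b^{K},a]$ with $K=r^2(1-2\delta_{p,3}3^{m-1}\ell)v\ell/2$ via the identity $[xy,z]=[x,z]^y[y,z]$. Because $v_p(K)\ge 2m$, every term produced by applying \eqref{eq.comj.4} to $[b^K,a]$ lands in $\gamma_6(J)=1$; meanwhile \eqref{eq.comj.2} gives $[c^{r\ell},a]\equiv a^{-r^2\ell^2}\pmod{\gamma_6(J)}$, which becomes $b^{r^2\ell^2}=a_7$ via $a^{r^2}=b^{-r^2}$, proving $t_{1,6,7}=1$. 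For $(2,4)$ I would apply \eqref{eq.comj.4} with $(i,j)=(-rv,1)$, using $\phi(1)=\varphi(1)=0$ to kill most summands; what survives is $c^{-rv}$ multiplied by an $a$-power assembled from $-r\ell\phi(-rv)$ and $\xi(-rv,1)$. Using $c^{-rv}=c^{r\ell}$ (since $c$ has order $r^2$) together with $a^{r^2}=b^{-r^2}$ converts the result into exactly $c^{r\ell}b^{r^2(1-2\delta_{p,3}3^{m-1}\ell)v\ell/2}=a_6$, giving $t_{2,4,6}=1$ and $t_{2,4,7}=0$.

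The main obstacle is the pair $(2,4)$: the rational coefficient $1/6$ appearing in $\varphi$ inside the $\xi$-term must be tracked carefully because its $p$-part vanishes in all but the $p=3$ case, and it is precisely this contribution that produces the $\delta_{p,3}3^{m-1}$ correction appearing in the closed form of $a_6$. Once the six commutators above are verified and matched against the canonical normal form $\prod_{k>j}a_k^{t_{ijk}}$ (available since $J$ has Hirsch-type presentation with sections $\langle a_k\rangle J_{k+1}/J_{k+1}$ of order $r$), the list of non-zero entries of $t$ is exactly the one claimed, completing the proof.
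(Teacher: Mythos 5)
Your reduction to the six pairs $(1,4),(2,4),(3,4),(2,5),(3,5),(1,6)$ via weights is right, and your treatments of $(1,4)$, $(2,5)$, $(3,4)$, $(3,5)$ and $(1,6)$ are correct; they use only \eqref{eq.comj.2}--\eqref{eq.comj.3} (which are immediate from the defining relations $a^c=a^{\alpha_0}$, ${}^cb=b^{\alpha_0}$) together with $a^{r^2}=b^{-r^2}$ and $v\equiv-\ell\bmod r$, and in fact you cover the pair $(1,6)$ explicitly where the paper's own proof passes over it in silence. The problem is the pair $(2,4)$, which is the only genuinely delicate one, and there your argument has a structural defect: you invoke \eqref{eq.comj.4}, i.e.\ the general formula for $[a^i,b^j]$ from Theorem \ref{thm.commutators.J}. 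But the entire point of Section \ref{andrea} is to give a derivation of that formula independent of \cite{MS2}, and Theorem \ref{T:tcoeffs} is an ingredient of that derivation (the array $t$ is fed into the Hall polynomials, which then yield \eqref{eq.comj.5} and hence \eqref{eq.comj.4}). Using \eqref{eq.comj.4} to establish $t_{2,4,6}=1$ therefore makes the section's argument circular; it is only acceptable if one is willing to take Theorem \ref{thm.commutators.J} on faith from \cite{MS2}, which is exactly what this section is trying to avoid.

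Even granting \eqref{eq.comj.4}, your $(2,4)$ step is asserted rather than executed, and it is the one place where the arithmetic bites: with $(i,j)=(-rv,1)$ the only surviving correction is $a^{\xi(-rv,1)}=a^{r^2\ell^2\varphi(1-rv)}$, whose $p$-valuation is $3m$ for $p>3$ but $3m-1$ for $p=3$, and matching that residue against the $\delta_{p,3}3^{m-1}$ term hidden in $a_6=c^{r\ell}b^{r^2(1-2\delta_{p,3}3^{m-1}\ell)v\ell/2}$ requires tracking a sign modulo $3$ that is very easy to get wrong; as written, nothing in your proposal certifies that the two $3^{3m-1}$-contributions agree rather than differ by $b^{2\cdot 3^{3m-1}\ell}$. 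The paper avoids both issues at once: it applies the automorphism $\theta\colon (a,b,c)\mapsto(b,a,c^{-1})$ to convert $[a^{-rv},b]$ into $(a^{b^{-rv}})^{-1}a=c^{rv}b^{-g_{rv}}$, which is the \emph{same} expression $c^{rs}b^{-g_{rs}}$ that defines $a_6$ (with $s=\ell$ in place of $s=v$), so the identity $[b,a,a,b]=[b,a,b,a]$ follows from the explicit formula for $g_i$ in the Lemma and the relations $v\equiv-\ell\bmod r$, $a^{r^2}=b^{-r^2}$ --- using nothing beyond the defining relations. You should either adopt that $\theta$-symmetry argument for $(2,4)$, or at minimum carry out the $\xi(-rv,1)$ computation in full for $p=3$ and acknowledge the dependence on \cite{MS2} that this creates.
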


\begin{proof}
  When $i=1$ we only need to determine $[b,a,a,a]=[a^{-rv},a]=1$, so $t_{14k}=0$. For $i=2$, 
  $$
  [b,a,a,b]=[a^{-rv},b]=(b^{a^{-rv}})^{-1}b.
  $$
We apply the automorphism $\theta$ of $J$ sending $(a,b,c)$ to $(b,a,c^{-1})$ to the latter term and obtain
$$
(a^{b^{-rv}})^{-1}a=c^{rv}b^{-g_{rv}}.
$$
A calculation similar to the one applied to $g_{r\ell}$ furnishes $-g_{rv}\equiv r^2v^2/2\mod r^3$.
Thus the latter term equals $c^{rv}b^{r^2v^2/2}$ with $\theta$-preimage
$c^{-rv}a^{r^2v^2/2}=c^{r\ell}b^{r^2\ell v/2}$, since $v\equiv-\ell\mod r$ and $a^{r^2}=b^{-r^2}$.
Therefore $[b,a,a,b]=[b,a,b,a]$ and $t_{2,4,6}=1$.
Trivially $[b,a,b,b]=[b^{-r\ell},b]=1$ and $t_{2,5,k}=0$.
When $i=3$,
$$
[[b,a,a],[b,a]]=[a^{-rv},c^{-1}]=a^{rv(1-\mu_0)}=a^{-r^2v^2}=b^{-r^2v\ell}=[b,a,b,a,b]
$$
so $t_{3,4,7}=1$. Finally,
$$
[[b,a,b],[b,a]]=[b^{-r\ell},c^{-1}]=b^{r\ell(1-\alpha_0)}=b^{-r^2\ell^2}=[b,a,b,a,b]^{-1}
$$ so $t_{3,5,7}=-1$.
\end{proof}

We can now feed in this data into the GAP package HallPoly and obtain the Hall polynomials for $T$
namely integer-valued polynomials with rational coefficients $F_k$ such that given two $n$-tuples of integers
$x=(x_1,\ldots,x_n)$ and $y=(y_1,\ldots,y_n)$

\begin{equation}\label{E:HallPol}
  (a_1^{x_1}\ldots a_n^{x_n})(a_1^{y_1}\ldots a_n^{y_n})=a_1^{F_1(x,y)}\ldots a_n^{F_n(x,y)}.
\end{equation}

We then evaluate $F_k(x,y)$ for $x=(-i,-j,0,\ldots,0)$ and $y=(i,j,0,\ldots,0)$, $i,j\in\Z$.
We recall that rational coefficients integer-valued polynomials are integral combination of binomial polynomials, namely polynomials defined as
$$
\binom{z}{k}=\frac{z(z-1)\cdots(z-k+1)}{k!}
$$
for $0<k\in \N$, $\binom{z}{0}=1$. This result extends to more variables $z_1,\ldots,z_r$ considering integral combinations of multinomial polynomials 
$$
\binom{z_1}{k_1}\cdots\binom{z_r}{k_r}.
$$

The results read as follows:
\begin{itemize}
  \item $F_k=0$, for $k=1,2$;
  \item $F_3=-ij$;
  \item $F_4=-j\binom{i}{2}$;
  \item $F_5=-i\binom{j}{2}$;
  \item $F_6=-\binom{i}{2}\binom{j}{2}$.
\end{itemize}

The shape of $F_7(x,y)$ is more cumbersome and equals
$$
-ij^2/2 + i^3j^2/4 + i^2j^2/4 + ij/12 - i^2j^3/4 + i^3j/12 + ij^3/12.
$$
We need to convert it into an integral combination of multinomial polynomials for example introducing 
a graded lexicographic ordering $\succeq$. If we declare $i\succeq j$, the highest term in $F_7$
is $i^3j^2/4$ corresponding to the occurrence of $3\binom{i}{3}\binom{j}{2}$. We finally get
$$
F_7(x,y)=3\binom{i}{3}\binom{j}{2}-3\binom{i}{2}\binom{j}{3}+\binom{i}{2}\binom{j}{2}-i\binom{j}{3}+2j\binom{i}{3}+2j\binom{i}{2}-i\binom{j}{2}.
$$

We may finally state the following commutator formula for $J$.

\begin{theorem}
  Given $i,j\in\Z$, we have
  $$
  [a^i,b^j]=c^{ij}a^{-r\ell j\binom{i}{2}}b^{r\ell i\binom{j}{2}}c^{-r\ell\binom{i}{2}\binom{j}{2}}b^{-r^2v\ell\chi(i,j)/2},
  $$
  where $\chi(i,j)$ equals
  $$
  6\binom{i}{3}\binom{j}{2}-6\binom{i}{2}\binom{j}{3}+4\binom{i}{3}j+3\binom{i}{2}\binom{j}{2}-2i\binom{j}{3}+2\binom{i}{2}j-2i\binom{j}{2}+2\delta_{p,3}3^{m-1}\ell\binom{i}{2}\binom{j}{2}.                                                                                     
  $$
\end{theorem}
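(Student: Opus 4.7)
The plan is to specialize the Hall-polynomial identity $(\ref{E:HallPol})$ for the torsion-free cover $T$ at the point $x=(-i,-j,0,\ldots,0)$, $y=(i,j,0,\ldots,0)$. Since the product $(a_1^{-i}a_2^{-j})(a_1^i a_2^j)$ equals $[a^i,b^j]$, and the evaluations $F_1(x,y)$, $F_2(x,y)$ both vanish, one obtains
$$
[a^i,b^j]=a_3^{F_3}a_4^{F_4}a_5^{F_5}a_6^{F_6}a_7^{F_7},
$$
with the explicit values of $F_3,\ldots,F_7$ already computed as binomial expressions in $i,j$.

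Next I would replace each $a_k$ by its $J$-expression from Theorem~\ref{Basic Commutators} and take the indicated power. The elementary pieces give $(c^{-1})^{F_3}=c^{ij}$, $(b^{-r\ell})^{F_5}=b^{r\ell i\binom{j}{2}}$, and $(b^{-r^2\ell v})^{F_7}=b^{-r^2\ell v F_7}$, the last lying in the centre. For the $a_4$-contribution, use $rv=-r\ell+r^2\ell^2$ together with the fundamental relation (\ref{funrel}) $a^{r^2}=b^{-r^2}$ to split $(a^{-rv})^{F_4}$ as $a^{-r\ell j\binom{i}{2}}\cdot b^{-r^2\ell^2 j\binom{i}{2}}$, sending the overflow into $Z(J)$. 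For $a_6$, since its $b$-factor lies in $\gamma_5\subseteq Z(J)$, the power $a_6^{F_6}$ distributes cleanly as $c^{-r\ell\binom{i}{2}\binom{j}{2}}\cdot b^{-r^2v\ell(1-2\delta_{p,3}3^{m-1}\ell)\binom{i}{2}\binom{j}{2}/2}$.

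All factors of the form $b^{r^2\cdot}$ are central by (\ref{zj}) and may be pushed to the rightmost position, yielding
$$
[a^i,b^j]=c^{ij}\,a^{-r\ell j\binom{i}{2}}\,b^{r\ell i\binom{j}{2}}\,c^{-r\ell\binom{i}{2}\binom{j}{2}}\,b^{E},
$$
where the consolidated central exponent is
$$
E=-r^2\ell^2 j\binom{i}{2}-\tfrac{1}{2}r^2v\ell(1-2\delta_{p,3}3^{m-1}\ell)\binom{i}{2}\binom{j}{2}-r^2\ell v\,F_7(i,j).
$$

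The last step is the arithmetic identity $E\equiv -r^2v\ell\chi(i,j)/2\pmod{r^3}$. After dividing by $r^2$ and working modulo $r$, the congruence $v\equiv-\ell\pmod r$ converts the stray $\ell^2$ summand into an $\ell v$ contribution, reducing the claim to a polynomial identity between the explicit $F_7(i,j)$ and $\chi(i,j)$, routinely verified by expansion in binomial coefficients. The main obstacle is this concluding bookkeeping: tracking the $\delta_{p,3}$ correction embedded in $a_6$ (which is the sole source of the $p=3$ anomaly inherited from Lemma~\ref{conj-a-by-b-powers}) and confirming that the three central contributions combine to yield precisely the stated form of $\chi(i,j)$.
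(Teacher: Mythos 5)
Your argument is correct and coincides with the paper's own proof: both specialize the Hall-polynomial identity (\ref{E:HallPol}) at $x=(-i,-j,0,\ldots,0)$, $y=(i,j,0,\ldots,0)$, substitute the expressions of Theorem \ref{Basic Commutators} for the $a_k$, and then consolidate the central $b^{r^2\cdot}$ factors via $a^{r^2}=b^{-r^2}$ and $v\equiv-\ell\pmod r$ to obtain $\chi(i,j)=-2j\binom{i}{2}+\binom{i}{2}\binom{j}{2}+2F_7$ together with the $\delta_{p,3}$ correction coming from $a_6$. The only difference is that you write out the bookkeeping of the consolidated central exponent $E$ somewhat more explicitly than the paper does.
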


\begin{proof}
  By Theorem \ref{T:tcoeffs} and the determination of the Hall polynomials $F_k$ evaluated at $x=(-i,-j,0,\ldots,0)$ and $y=(i,j,0,\ldots,0)$, equation (\ref{E:HallPol}) becomes
  
  \begin{equation}
    [a_1^i,a_2^j]=a_3^{-ij}a_4^{-j\binom{i}{2}}a_5^{-i\binom{j}{2}}a_6^{-\binom{i}{2}\binom{j}{2}}a_7^{F_7}.\label{eq.comj.5}
  \end{equation}
  
Substituting the $a_i$ with the corresponding values in (\ref{DefElts2}) and exploiting Theorem \ref{Basic Commutators} we obtain 
  
  \begin{equation}
    [a^i,b^j]=c^{ij}a^{rv j\binom{i}{2}}b^{r\ell i\binom{j}{2}}(c^{r\ell}b^{r^2(1-2\delta_{p,3}3^{m-1}\ell)v\ell /2})^{-\binom{i}{2}\binom{j}{2}}b^{-r^2v\ell F_7}.\label{eq.comj.6}
  \end{equation}
We substitute $v$ with $-\ell+r\ell^2$ and move all powers of $b^{r^2}$ to the rightmost position and set $\chi(i,j)=-2j\binom{i}{2}+\binom{i}{2}\binom{j}{2}+2F_7$.
We keep $v$ in the exponent of the last factor because $v$ is always even.
\end{proof}

Since $r=p^m$, $a^{c^{ij}}=a^{\alpha_o^{ij}}$, $b^{c^{ij}}=b^{\mu_o^{ij}}$ and $a^{r^2}=b^{-r^2}\in Z(J)$,
one can easily deduce that (\ref{eq.comj.4}) is equivalent to (\ref{eq.comj.5}).

\section{Case 2 when $\alpha=\beta$}\label{s5}

We keep the notation and hypotheses from Sections \ref{s2} and \ref{s3}, and assume further 
that $p=2$. 

In relation to (\ref{defrelgp}), \cite[Theorem 7.1 and Proposition 7.2]{MS} ensure that $o(a)=o(b)=2^{3m-1}$,
$o(c)=2^{2m}$, and $|J|=2^{7m-3}$. 
Thus $J$ is generated by  $a$, $b$, and $c$ subject to the defining relations
\begin{equation}
\label{relgpdos}
c=[a,b], a^{c}=a^{\alpha^\gamma},\, {}^{c} b=b^{\alpha^\gamma}, a^{2^{3m-1}}=1, b^{2^{3m-1}}=1.
\end{equation}
Notice the existence of the automorphism $a\leftrightarrow b$, $c\leftrightarrow c^{-1}$ of $J$.

\begin{theorem}\label{fern}
    For all $i,j\in\Z$ the following commutator formulas hold in $J$:
    \begin{align}
        &[c^i,a^j]
        = a^{-2^m\ell ij - 2^{2m}\ell^2\phi(i)j},\label{eq.fer}\\
        &[c^i,b^j]
        = b^{2^m\ell ij - 2^{2m}\ell^2\phi(i+1)j},\label{eq.fer2}\\
        &[a^i,b^j]
        =a^{-2^m\ell\phi(i)j} b^{2^m\ell i\phi(j)} c^{ij - 2^m\ell\phi(i)\phi(j)} a^{\xi(i,j)},\label{eq.fer3}
				\end{align}
    where 
    \[
    \xi(i,j)= 2^{2m-1}\ell^2\{2\varphi(i+1)j + (2i-7)\phi(i)\phi(j) - 2i\phi(j) - (3i+1)i\varphi(j)\}.
    \]
		\end{theorem}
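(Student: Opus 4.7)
The plan is to parallel the approach used for Case 1 in Section~\ref{andrea}, taking advantage of the fact that the structural data for $J$ in Case~2 (lower-central-series ranks, presentation, relation $a^{2^{2m}}b^{2^{2m}}=1$, centre $Z(J)=\langle a^{2^{2m}}\rangle$) is entirely analogous, as recorded in \cite{MS}. Indeed, one may simply run the Cant--Eick algorithm on the same torsion-free universal cover $T$ of Hirsch length $7$ that is used in Section~\ref{andrea}: the basic commutators $a_3,\dots,a_7$ defined in (\ref{DefElts2}), the cubic array $t$ determined by Theorem~\ref{T:tcoeffs}, and the Hall polynomials $F_1,\dots,F_7$ all carry over verbatim once we replace $r=p^m$ with $r=2^m$. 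The only genuine difference is that the $\delta_{p,3}$-contribution in $\xi(i,j)$ disappears, which is expected because the auxiliary term $-2\delta_{p,3}3^{m-1}\ell\,\phi(i)\phi(j)$ was introduced in Theorem~\ref{Basic Commutators} only to account for the rounding of $r^3v\ell^2/3$ in the $p=3$ case.

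More concretely, I would first pin down the one-variable base cases. The relations $a^c=a^{\alpha^\gamma}$ and ${}^cb=b^{\alpha^\gamma}$ give $[c,a]=a^{-2^m\ell}$ and $[c,b]=b^{2^m\ell}$, matching $i=j=1$ in (\ref{eq.fer}) and (\ref{eq.fer2}). Then induction on $i$ via $[c^{i+1},a^j]=[c,a^j]^{c^i}[c^i,a^j]$, combined with the fact that conjugating $a$ by $c^i$ multiplies the exponent by $(\alpha^\gamma)^i=(1+2^m\ell)^i$, produces the second-order correction $-2^{2m}\ell^2\phi(i)j$. The same argument for $b$, using the one-step-shifted convention ${}^cb=b^{\alpha^\gamma}$ instead of $b^c$, accounts for the appearance of $\phi(i+1)$ rather than $\phi(i)$ in (\ref{eq.fer2}). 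Higher-order errors are trivial because $a^{2^{3m-1}}=1$ kills them.

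The heart of the argument is (\ref{eq.fer3}). Here I would use the expansion
\[
[a^i,b^{j+1}] \;=\; [a^i,b^j]\cdot[a^i,b]^{b^j},
\]
together with $[a^i,b]$, which is computable directly from $[a,b]=c$ and (\ref{eq.fer}), and then handle the conjugation by $b^j$ using (\ref{eq.fer2}). The leading term $c^{ij}$ comes from the bilinear pairing $[a,b]=c$; the linear corrections $a^{-2^m\ell\phi(i)j}$ and $b^{2^m\ell i\phi(j)}$ arise from collecting the contributions of $[c,a^j]$ and $[c^i,b]$ type terms; and the quadratic correction $-2^m\ell\phi(i)\phi(j)$ in the exponent of $c$ comes from their interaction. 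All third-order errors collect inside $Z(J)=\langle a^{2^{2m}}\rangle$, producing the single $a^{\xi(i,j)}$ factor, whose coefficients can be matched by evaluating the Hall polynomial $F_7$ at $x=(-i,-j,0,\dots,0)$, $y=(i,j,0,\dots,0)$ exactly as in Section~\ref{andrea}.

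The main obstacle is the careful $2$-adic bookkeeping. Because $p=2$, the factor $\ell/2$ that appears naturally in Case~1 (from halving $v\ell$ in Theorem~\ref{Basic Commutators}) cannot be used; instead the parity of $v=-\ell+r\ell^2$ must again be tracked to show that $\xi(i,j)$ is an integer with the factor $2^{2m-1}$ rather than $2^{2m}/2$. One must also check the potential low-$m$ exception: by \cite[Proposition 9.2]{MS}, $Z_3(J)$ need not be abelian when $p=2$ and $m=1$, so the rearrangements used in the induction must be verified to take place inside a sufficiently deep term of the lower central series where commutativity is automatic, or the case $m=1$ must be treated separately by direct computation in the (finite, explicit) group $J$. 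Once these subtleties are settled, the formula in Theorem~\ref{fern} follows by comparison with the Hall polynomial output, exactly as the equivalence of (\ref{eq.comj.5}) and (\ref{eq.comj.4}) was established at the end of Section~\ref{andrea}.
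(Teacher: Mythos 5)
The paper does not prove Theorem \ref{fern} at all: it cites \cite[Theorem 8.1]{MS2}, and the independent derivation via the Cant--Eick algorithm in Section \ref{andrea} is explicitly restricted to Case 1. Your proposal is therefore necessarily a different route, and the question is whether it works. The inductive skeleton (base cases from $a^c=a^{\alpha^\gamma}$ and ${}^cb=b^{\alpha^\gamma}$, then $[a^i,b^{j+1}]=[a^i,b^j]\,[a^i,b]^{b^j}$ with errors pushed into deeper terms of the central series) is sound in outline. But the central claim that the universal cover $T$, the cubic array of Theorem \ref{T:tcoeffs}, and the Hall polynomials ``carry over verbatim once we replace $r=p^m$ with $r=2^m$'' is not justified, and the structural data you quote for Case 2 is wrong. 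By (\ref{funrel2}) and (\ref{zjdos}) the fundamental relation is $a^{2^{2m-1}}b^{2^{2m-1}}=1$ (not $a^{2^{2m}}b^{2^{2m}}=1$) and $Z(J)=\langle a^{2^{2m-1}}\rangle$ (not $\langle a^{2^{2m}}\rangle$); moreover $o(a)=o(b)=2^{3m-1}$, $o(c)=2^{2m}$, $|J|=2^{7m-3}$, and by (\ref{inter22}) one has $\langle a\rangle\cap\langle c\rangle=\langle a^{2^{3m-2}}\rangle\neq 1$, in contrast with (\ref{inter2}) in Case 1. Consequently $J$ admits no central series with seven cyclic factors each of order $2^m$, and it is not the quotient of the same torsion-free group $T$ by the analogous congruence subgroup: the extra identifications could in principle alter the lower-order terms, so the fact that the final formula is formally the Case 1 expression with $p=2$ (and the $\delta_{p,3}$ term deleted) is a conclusion to be verified, not an input one may assume.

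You do correctly flag the two genuine obstacles --- the $2$-adic integrality of $\xi(i,j)$ (since $v=-\ell+r\ell^2$ is odd when $r=2^m$, unlike Case 1 where $r$ odd forces $v$ even) and the failure of $Z_3(J)$ to be abelian when $m=1$ (\cite[Proposition 9.2]{MS}) --- but the proposal defers both rather than resolving them, and the second undercuts precisely the rearrangement steps on which your induction relies. As written, the argument establishes the shape of $[a^i,b^j]$ modulo a deep term of the lower central series but does not pin down $\xi(i,j)$, which is the actual content of the theorem. To make this rigorous you would either have to redo the Cant--Eick computation with the correct Case 2 filtration (recomputing the analogues of Theorem \ref{Basic Commutators}, where the exponents genuinely change), or carry out the double induction with full bookkeeping as in \cite[Theorem 8.1]{MS2}, treating $m=1$ separately.
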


This is stated in \cite[Theorem 3.1]{MS2} and proven in  \cite[Theorem 8.1]{MS2}.
%By \cite[Proposition 6.4]{MS}, we have
%\begin{equation}
%\label{funda}
%a^{2u}b^{2u}=1, c^{2u}=a^{2us}=b^{2us},
%\end{equation}
%where $a$ and $b$ have order $4us$ and $c$ has order $4u$ by \cite[Proposition 7.2]{MS}.

Recall that $\alpha^\gamma=1+2^m\ell$, where $m\geq 1$ and $2\nmid \ell$;
$\gamma=2^n k$, where $n\geq 0$, $2\nmid k$; $\alpha=1+2^h t$, where $h\geq 0$, and $2\nmid t$. Recall also that
$N_p$ is the normal closure of the $V_i$, $i\geq 1$, defined by (\ref{defvi}) and that by Proposition \ref{adentro}, we have
$N_2\subseteq Z_3(J)$.

%Recall that $N_p$ is the normal closure of the $V_i$, $i\geq 1$.

%It follows at once from \cite[Theorem 3.1]{MS3} that every $V_i$ is in 
%$\langle a^{2^{m+h-1}}, b^{2^{m+h-1}}, c^{2^{m+2h-2}}\rangle$. Here
%the normal closure of $\langle a^{2^{m+h-1}}, b^{2^{m+h-1}}, c^{2^{m+2h-2}}\rangle$ in $J$
%is
%$$ 
%\langle a^{2^{m+h-1}}, b^{2^{m+h-1}}, c^{2^{m+h-1}}\rangle,
%$$
%also by \cite[Theorem 3.1]{MS3}.

%By \cite[Proposition 7.2]{MS} that $a$ and $b$ have order $2^{3m-1}$ and $c$ has order $2^{2m}$.
Due to \cite[Proposition 6.4]{MS} the following fundamental relations hold in $J$:
\begin{equation}
\label{funrel2}
a^{2^{2m-1}}b^{2^{2m-1}}=1,\; a^{3^{3m-2}}=c^{2m-1}=b^{2^{3m-2}}.
\end{equation}
Moreover, \cite[Theorem 8.1 and Proposition 8.2]{MS} give
\begin{equation}
\label{zjdos}
Z(J)=\langle a^{2^{2m-1}}\rangle=\langle b^{2^{2m-1}}\rangle, 
Z_2(J)=\langle a^{2^{2m-1}}, c^{2^{m-1}}\rangle,\text{ and if }m>1,
Z_3(J)=\langle a^{2^{m}}, b^{2^{m}}, c^{2^{m-1}}\rangle.
\end{equation}
Here $Z_3(J)$ is abelian if $m>1$ by \cite[Proposition 9.1]{MS}. Furthermore, by \cite[Proposition 7.2]{MS},
\begin{equation}
\label{inter12}
\langle a,c\rangle\cap \langle b\rangle=Z(J)=\langle b,c\rangle\cap \langle a\rangle,
\end{equation}
\begin{equation}
\label{inter22}
\langle a\rangle\cap \langle c\rangle=\langle c^{2^{2m-1}}\rangle=\langle a^{2^{3m-2}}\rangle=\langle b\rangle\cap \langle c\rangle.
\end{equation}
%It follows that from (\ref{zjdos}) and (\ref{inter22}) that
%\begin{equation}
%\label{zj22}
%Z_2(J)=\langle a^{2^{2m-1}}, c^{2^{m-1}}\rangle=Z(J)\langle c^{2^{m-1}}\rangle.
%\end{equation}
%By \cite[Theorem 7.1]{MS}, $|J|=2^{7m-3}$ and every element of $J$ can be written uniquely in the form  $a^ib^jc^g$, where
%$0\leq i<2^{3m-1}$, $0\leq j<2^{2m-1}$, $0\leq g<2^{2m-1}$.

\begin{prop}\label{muevoandrea} Suppose $m>1$. Then
$$
Z_3(J)=\langle a^{2^{m}}, a^{2^{2m-2}}c^{2^{m-1}}, a^{2^{m}}b^{2^{m}}\rangle=
\langle b^{2^{m}}, b^{2^{2m-2}}c^{2^{m-1}}, a^{2^{m}}b^{2^{m}}\rangle
\cong C_{2^{2m-1}}\times  C_{2^{m}}\times C_{2^{m-1}},
$$
where $Z_3(J)$ is the internal direct product of $\langle a^{2^{m}}\rangle$ , $\langle a^{2^{2m-2}}c^{2^{m-1}}\rangle$, 
and $\langle a^{2^{m}}b^{2^{m}}\rangle$.
\end{prop}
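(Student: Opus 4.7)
The plan is to mirror the proof of Proposition \ref{nuevoandrea}, performing all calculations inside $Z_3(J)$, which is abelian for $m>1$ by \cite[Proposition 9.1]{MS}. The argument splits into three natural steps: verify that each proposed set generates $Z_3(J)$; compute the orders of the three cyclic factors; and deduce the internal direct product decomposition by a cardinality count.

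The first step is routine. From (\ref{zjdos}) we have $Z_3(J) = \langle a^{2^m}, b^{2^m}, c^{2^{m-1}} \rangle$. Since $m \geq 2$, $a^{2^{2m-2}} = (a^{2^m})^{2^{m-2}} \in \langle a^{2^m} \rangle$, so $b^{2^m}$ and $c^{2^{m-1}}$ are recovered from $\{a^{2^m}, a^{2^m} b^{2^m}, a^{2^{2m-2}} c^{2^{m-1}}\}$, proving the first equality. The alternative generating set starting with $b^{2^m}$ follows by applying the automorphism $a \leftrightarrow b$, $c \leftrightarrow c^{-1}$ of $J$ mentioned after (\ref{relgpdos}) and using that $\langle c^{-2^{m-1}} \rangle = \langle c^{2^{m-1}} \rangle$.

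For the orders, $o(a^{2^m}) = 2^{2m-1}$ is immediate from $o(a) = 2^{3m-1}$. Abelianness combined with $a^{2^{2m-1}} b^{2^{2m-1}} = 1$ from (\ref{funrel2}) gives $(a^{2^m} b^{2^m})^{2^{m-1}} = 1$, and the reverse bound comes from $\langle a \rangle \cap \langle b \rangle = Z(J) = \langle a^{2^{2m-1}} \rangle$ in (\ref{inter12}), forcing $o(a^{2^m} b^{2^m}) = 2^{m-1}$. Analogously, $c^{2^{2m-1}} = a^{2^{3m-2}}$ from (\ref{funrel2}) together with $a^{2^{3m-1}} = 1$ yields $(a^{2^{2m-2}} c^{2^{m-1}})^{2^m} = 1$, while (\ref{inter22}) ensures that $(a^{2^{2m-2}} c^{2^{m-1}})^{2^{m-1}} = a^{2^{3m-3}} c^{2^{2m-2}} \neq 1$, since $c^{2^{2m-2}} \notin \langle a \rangle \cap \langle c \rangle = \langle c^{2^{2m-1}} \rangle$; hence this factor has order exactly $2^m$.

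Finally, one computes $|Z_3(J)| = 2^{4m-2}$, either via successive central factors using (\ref{inter12}) and (\ref{inter22}), or directly from the normal form for $J$ in \cite[Theorem 7.1]{MS}. Since $2^{2m-1} \cdot 2^m \cdot 2^{m-1} = 2^{4m-2}$ matches $|Z_3(J)|$ and the three cyclic subgroups generate the abelian group $Z_3(J)$, the multiplication map from their external direct product onto $Z_3(J)$ is a bijection by cardinality, yielding the internal direct product $C_{2^{2m-1}} \times C_{2^m} \times C_{2^{m-1}}$. The main piece of bookkeeping relative to Case~1 is the exponent $2^{3m-1}$ for $o(a)=o(b)$ (rather than $p^{3m}$), which is what produces the ``anomalous'' middle factor $C_{2^{m-1}}$ in place of $C_{p^m}$; everything else is formally identical.
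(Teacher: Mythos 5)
Your proof is correct and takes essentially the same route as the paper, whose own proof is just the one-line instruction to combine (\ref{funrel2})--(\ref{inter22}), the abelianness of $Z_3(J)$, and the orders of $a$, $b$, $c$; you have simply carried out that computation in full (generation, the three orders via the intersection relations, and the cardinality count $2^{2m-1}\cdot 2^m\cdot 2^{m-1}=|Z_3(J)|=2^{4m-2}$). No gaps.
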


\begin{proof} Use that (\ref{funrel2})-(\ref{inter22}), that $Z_3(J)$ is abelian, and the stated orders of $a$, $b$, and $c$.
\end{proof}

As in Case 1, we see that $\alpha^i\equiv 1\equiv \mu^i\mod 2^h$, $i\geq 1$, and  
\begin{equation}
\label{evvv}
v_2(\alpha-1)=h=v_2(\mu-1).
\end{equation}

Since $\alpha^i\mu^i\equiv 1\mod 2^{3m-1}$, we find that (\ref{defvi}), (\ref{zjdos}), and Theorem \ref{fern} give
\begin{equation}
\label{vidos}
V_i=a^{-2^{m-1}(\alpha^i-1)\ell} b^{2^{m-1}(\mu^i-1)\ell} c^{-2^{m-2}(\alpha^i-1)(\mu^i-1)\ell} z_i,\quad i\geq 1,
\end{equation}
where $z_i\in Z(J)$.

\medskip

\noindent{\bf The case $h\geq 2$.} We assume throughout this subsection that $h\geq 2$. Then $m=h+n$.
Applying Theorem \ref{fern}, we see that $z_i^{2^n}=1$ in (\ref{vidos}). Also,
as $Z_3(J)$ is abelian, Proposition \ref{adentro} together with 
$\alpha^i\equiv 1\equiv\mu^i\mod 2^h$, $c^{2^{2m}}=1$, $m=h+n$, $z_i^{2^n}=1$,
(\ref{zjdos}), and (\ref{vidos}) yield
\begin{equation}
\label{vipdos}
V_i^{2^n}=a^{-2^{m+n-1}(\alpha^i-1)\ell} b^{2^{m+n-1}(\mu^i-1)\ell}\in Z(J).
\end{equation}

Exactly as in Case 1, we see that $N_2$
is contained in the normal subgroup $M_{n}\cap L_{2n}$ of $J$, where $V\in M_n$ (resp. $V\in L_{2n}$)
if $V\in Z_3(J)$ and $V^{2^{n}}\in Z(J)$ (resp. $V^{2^{2n}}=1$), and that

\begin{prop}\label{v2ndos} If $2\nmid i$, then $V_i$ has order $2^{2n}$.
\end{prop}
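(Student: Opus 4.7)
The plan is to mirror the Case 1 argument used for Proposition \ref{v2n}, modified by the fundamental relation \eqref{funrel2}. My goal is to simplify $V_i^{2^n}$ into a single power of $a$ and then determine the 2-adic valuation of its exponent, which together with $V_i \in L_{2n}$ will pin down its order.

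First, I would rewrite $V_i^{2^n}$ from \eqref{vipdos} as a single power of $a$. For odd $i$, exactly as in \eqref{evvv}, one checks $v_2(\mu^i - 1) = h$, so the $b$-exponent in \eqref{vipdos} has 2-adic valuation $m + n - 1 + h = 2m - 1$ (using $m = h + n$). Hence the $b$-factor lies in $\langle b^{2^{2m-1}}\rangle$, and the fundamental relation $a^{2^{2m-1}} b^{2^{2m-1}} = 1$ from \eqref{funrel2} lets me convert it into an $a$-power with opposite sign, yielding
\[
V_i^{2^n} = a^{-2^{m+n-1}\ell\,[(\alpha^i - 1) + (\mu^i - 1)]}.
\]

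Second, I would evaluate $v_2\bigl((\alpha^i - 1) + (\mu^i - 1)\bigr)$ in analogy with \eqref{val}. Since $\alpha^i \mu^i \equiv 1 \bmod 2^{3m-1}$, I have $\mu^i \equiv \alpha^{-i} \bmod 2^{3m-1}$, whence
\[
(\alpha^i - 1) + (\mu^i - 1) \equiv \alpha^{-i}(\alpha^i - 1)^2 \bmod 2^{3m-1}.
\]
For odd $i$, $v_2(\alpha^i - 1) = h$; combined with $3m - 1 > 2h$ (valid since $h \geq 2$ and $n \geq 0$), this forces the valuation to be exactly $2h$.

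Finally, the exponent of $a$ in $V_i^{2^n}$ has 2-adic valuation $m + n - 1 + 2h = 3h + 2n - 1$. Since $a$ has order $2^{3m-1} = 2^{3h+3n-1}$, it follows that $V_i^{2^n}$ has order $2^n$, and in particular $V_i^{2^{2n-1}} \neq 1$. Combining this with the already-established containment $V_i \in L_{2n}$ (so $V_i^{2^{2n}} = 1$) gives that $V_i$ has order exactly $2^{2n}$. The one delicate point is the boundary computation showing that the $b$-exponent has 2-adic valuation precisely $2m - 1$, which is what permits the clean application of the fundamental relation; beyond that, the argument is a direct adaptation of the Case 1 calculation.
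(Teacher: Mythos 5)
Your proof is correct and follows essentially the same route as the paper, which at this point simply invokes the Case 1 argument: collapse $V_i^{2^n}$ into a single power of $a$ via the fundamental relation $a^{2^{2m-1}}b^{2^{2m-1}}=1$ and then read off the order from $v_2\bigl((\alpha^i-1)+(\mu^i-1)\bigr)=2h$ for odd $i$. Your derivation of that valuation via $\mu^i\equiv\alpha^{-i}$ and $(\alpha^i-1)+(\alpha^{-i}-1)=\alpha^{-i}(\alpha^i-1)^2$ is just a cleaner restatement of the computation in (\ref{cru})--(\ref{val}).
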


\begin{prop}\label{muevoandrea2} We have
$$
M_n=\langle a^{2^{m+h-1}}, a^{2^{2m+h-2}}c^{2^{m+h-1}}, a^{2^{m+h-1}}b^{2^{m+h-1}}\rangle=
\langle b^{2^{m+h-1}}, b^{2^{2m+h-2}}c^{2^{m+h-1}}, a^{2^{m+h-1}}b^{2^{m+h-1}}\rangle.
$$
\end{prop}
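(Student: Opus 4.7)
The plan is to use the direct-product decomposition of $Z_3(J)$ from Proposition~\ref{muevoandrea} and translate the defining condition $V^{2^n}\in Z(J)$ of $M_n$ into congruences on the coordinates of $V$.

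Since $h\geq 2$ and $n\geq 0$, we have $m=h+n\geq 2$, so Proposition~\ref{muevoandrea} applies and $Z_3(J)$ is abelian. Any $V\in Z_3(J)$ can be written uniquely as
$$
V=(a^{2^m})^x\,(a^{2^{2m-2}}c^{2^{m-1}})^y\,(a^{2^m}b^{2^m})^z
$$
with $x,y,z$ ranging over $\Z/2^{2m-1}\Z$, $\Z/2^m\Z$, $\Z/2^{m-1}\Z$, respectively. The three cyclic subgroups $\langle a^{2^m}\rangle$, $\langle a^{2^{2m-2}}c^{2^{m-1}}\rangle$, $\langle a^{2^m}b^{2^m}\rangle$ form an internal direct sum, and $Z(J)=\langle a^{2^{2m-1}}\rangle$ sits inside the first summand (with index $2^{m-1}$) and meets the other two trivially. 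Since $Z_3(J)$ is abelian, $V^{2^n}$ is the product of the $2^n$-th powers of the three components, each in its own summand. Thus $V^{2^n}\in Z(J)$ amounts to three independent conditions: the second and third components raised to $2^n$ must vanish, and the first must land in $\langle a^{2^{2m-1}}\rangle$. Each condition unwinds to a divisibility statement (using $m=h+n$), yielding $2^{h-1}\mid x$, $2^h\mid y$, $2^{h-1}\mid z$.

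Consequently $M_n$ is generated by the corresponding powers of the three generators of $Z_3(J)$, which by abelian-ness simplify to $a^{2^{m+h-1}}$, $a^{2^{2m+h-2}}c^{2^{m+h-1}}$, and $a^{2^{m+h-1}}b^{2^{m+h-1}}$, giving the first equality. For the second equality, the key input is the fundamental relation $a^{2^{2m-1}}b^{2^{2m-1}}=1$ from \eqref{funrel2}, which yields $b^{-2^{2m+h-2}}=a^{2^{2m+h-2}}$ and hence lets one replace $a^{2^{2m+h-2}}$ by $b^{2^{2m+h-2}}$ modulo powers of $a^{2^{m+h-1}}$; together with the identity $a^{2^{m+h-1}}=(a^{2^{m+h-1}}b^{2^{m+h-1}})(b^{2^{m+h-1}})^{-1}$ this establishes mutual inclusion. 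Alternatively, the automorphism $a\leftrightarrow b$, $c\leftrightarrow c^{-1}$ of $J$ stabilises $M_n$ and interchanges the two generating sets up to inversions within the cyclic factors.

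The only real difficulty is the exponent bookkeeping in the central step; once the direct-sum description of $Z_3(J)$ is in hand, the rest reduces to routine divisibility arithmetic and one application of the fundamental relation.
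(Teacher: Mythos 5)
Your proposal is correct and follows exactly the route the paper intends: the paper's proof is literally ``immediate consequence of Proposition \ref{muevoandrea} and the definition of $M_n$,'' and your coordinate computation in the internal direct product $\langle a^{2^m}\rangle\times\langle a^{2^{2m-2}}c^{2^{m-1}}\rangle\times\langle a^{2^m}b^{2^m}\rangle$ (yielding $2^{h-1}\mid x$, $2^h\mid y$, $2^{h-1}\mid z$ from $m=h+n$) is precisely the omitted bookkeeping. The passage to the second generating set via the fundamental relation $a^{2^{2m-1}}b^{2^{2m-1}}=1$ and the identity $b^{2^{m+h-1}}=(a^{2^{m+h-1}})^{-1}(a^{2^{m+h-1}}b^{2^{m+h-1}})$ is likewise the intended argument.
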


\begin{proof} Immediate consequence of Proposition \ref{muevoandrea} and the definition of $M_n$.
\end{proof}

\begin{prop}\label{muevoandrea3} We have
$$
L_{2n}=\begin{cases} \langle a^{2^{m+2h-1}}, a^{2^{2m-2}}c^{2^{m-1}},a^{2^{m}}b^{2^{m}}\rangle=
\langle b^{2^{m+2h-1}}, b^{2^{2m-2}}c^{2^{m-1}},a^{2^{m}}b^{2^{m}}
\rangle\text{ if }n\geq h,\\
\langle a^{2^{m+2h-1}}, a^{2^{m+2h-2}}c^{2^{2h-1}},a^{2^{2h-1}}b^{2^{2h-1}}\rangle=\langle 
b^{2^{m+2h-1}}, b^{2^{m+2h-2}}c^{2^{2h-1}},a^{2^{2h-1}}b^{2^{2h-1}}
\rangle\text{ if }n<h.
\end{cases}
$$
\end{prop}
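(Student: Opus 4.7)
The plan is to leverage the direct product decomposition of $Z_3(J)$ supplied by Proposition \ref{muevoandrea}, which applies since $m\geq h\geq 2>1$. As $Z_3(J)$ is abelian and $L_{2n}=\Omega_{2n}(Z_3(J))$ is cut out by a single power relation, the $2^{2n}$-torsion distributes factor by factor: an arbitrary element
$$(a^{2^m})^i\,(a^{2^{2m-2}}c^{2^{m-1}})^j\,(a^{2^m}b^{2^m})^k$$
of $Z_3(J)$ lies in $L_{2n}$ if and only if $2^{2m-1}\mid 2^{2n}i$, $2^{m}\mid 2^{2n}j$ and $2^{m-1}\mid 2^{2n}k$, courtesy of the orders $2^{2m-1}$, $2^m$, $2^{m-1}$ of the three cyclic factors. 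Using the identity $m=h+n$, these conditions simplify to $2^{2h-1}\mid i$, $2^{\max(0,h-n)}\mid j$ and $2^{\max(0,h-n-1)}\mid k$, and $L_{2n}$ is the internal direct product of the three cyclic groups generated by raising each displayed factor to the appropriate power of $2$.

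In the subcase $n\geq h$, the last two maxes vanish and only the constraint on $i$ survives; the resulting generators are $a^{2^{m+2h-1}}$, $a^{2^{2m-2}}c^{2^{m-1}}$ and $a^{2^m}b^{2^m}$, matching the first line of the claim. In the subcase $n<h$, all three shifts are positive; a short computation using $m=h+n$ together with the commutativity of $Z_3(J)$ gives
$$\bigl(a^{2^{2m-2}}c^{2^{m-1}}\bigr)^{2^{h-n}}=a^{2^{m+2h-2}}c^{2^{2h-1}},\qquad \bigl(a^{2^m}b^{2^m}\bigr)^{2^{h-n-1}}=a^{2^{2h-1}}b^{2^{2h-1}},$$
which delivers the second line.

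To derive the alternative presentations, in which $a$ is replaced by $b$ in the first two generators, I would rerun the preceding analysis starting from the second direct product decomposition of $Z_3(J)$ recorded in Proposition \ref{muevoandrea}, namely $\langle b^{2^m},\, b^{2^{2m-2}}c^{2^{m-1}},\, a^{2^m}b^{2^m}\rangle$, whose cyclic factors again have orders $2^{2m-1}$, $2^m$ and $2^{m-1}$. There is no genuine obstacle beyond careful bookkeeping: once Proposition \ref{muevoandrea} is granted, the argument reduces to the substitution $m=h+n$ and a consistent tracking of the sign of $h-n-1$ across the two subcases.
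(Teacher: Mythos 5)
Your argument is correct and is exactly the paper's route: the paper derives the proposition as an immediate consequence of the direct product decomposition of $Z_3(J)$ in Proposition \ref{muevoandrea} together with the definition of $L_{2n}$, and your factor-by-factor computation of the $2^{2n}$-torsion (using $m=h+n$ and the orders $2^{2m-1}$, $2^{m}$, $2^{m-1}$) simply makes that deduction explicit. The exponent bookkeeping in both subcases checks out, so nothing further is needed.
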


\begin{proof} Immediate consequence of Proposition \ref{muevoandrea} and the definition of $L_{2n}$.
\end{proof}

\begin{prop}\label{mlndos} We have
$$
\begin{aligned}
M_n\cap L_{2n} &=\langle a^{2^{m+2h-1}}, a^{2^{2m+h-2}}c^{2^{m+h-1}},a^{2^{m+h-1}}b^{2^{m+h-1}}\rangle\\
&=\langle b^{2^{m+2h-1}}, b^{2^{2m+h-2}}c^{2^{m+h-1}},a^{2^{m+h-1}}b^{2^{m+h-1}}\rangle.
\end{aligned}
$$
Thus, $M_n\cap L_{2n}$ is the internal direct product of 
$\langle a^{2^{m+2h-1}}\rangle$, $\langle a^{2^{2m+h-2}}c^{2^{m+h-1}}\rangle$,$\langle a^{2^{m+h-1}}b^{2^{m+h-1}}\rangle$,
as well as the internal direct product of  
$\langle b^{2^{m+2h-1}}\rangle$, $\langle b^{2^{2m+h-2}}c^{2^{m+h-1}}\rangle$,$\langle a^{2^{m+h-1}}b^{2^{m+h-1}}\rangle$,
and it is isomorphic to $C_{2^{2n}}\times  C_{2^{n}}\times C_{2^{n}}$.
\end{prop}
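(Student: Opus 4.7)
The plan is to work inside the abelian group $Z_3(J)$, which by Proposition~\ref{muevoandrea} is the internal direct product of the three cyclic subgroups $\langle a^{2^m}\rangle$, $\langle a^{2^{2m-2}}c^{2^{m-1}}\rangle$, and $\langle a^{2^m}b^{2^m}\rangle$, of respective orders $2^{2m-1}$, $2^m$, and $2^{m-1}$. The first thing to notice is that each of the generators of $M_n$ in Proposition~\ref{muevoandrea2} and of $L_{2n}$ in Proposition~\ref{muevoandrea3} is visibly a power of exactly one of these three factor-generators; for example $a^{2^{m+h-1}}=(a^{2^m})^{2^{h-1}}$ and $a^{2^{2m+h-2}}c^{2^{m+h-1}}=(a^{2^{2m-2}}c^{2^{m-1}})^{2^h}$. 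Consequently both $M_n$ and $L_{2n}$ themselves split as internal direct products of subgroups of the three cyclic factors, and the intersection may be computed factor by factor.

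Writing $m=h+n$ throughout, a routine order computation shows that $M_n$ contributes cyclic subgroups of orders $2^{h+2n}$, $2^n$, $2^n$, while $L_{2n}$ contributes cyclic subgroups of orders $2^{2n}$, $2^{\min(m,2n)}$, $2^{\min(m-1,2n)}$. In each cyclic factor the two subgroups are linearly ordered by inclusion, and a short case check against $n\ge h$ versus $n<h$ confirms that $L_{2n}$ sits inside $M_n$ in factor~1 (since $2h-1\ge h-1$), while $M_n$ sits inside $L_{2n}$ in factors~2 and~3 (since $n\le\min(m,2n)$ and $n\le\min(m-1,2n)$). The intersection in each factor is therefore the smaller of the two, of respective orders $2^{2n}$, $2^n$, $2^n$, realized by the elements $a^{2^{m+2h-1}}$, $a^{2^{2m+h-2}}c^{2^{m+h-1}}$, and $a^{2^{m+h-1}}b^{2^{m+h-1}}$. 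This yields the first presentation, delivers $|M_n\cap L_{2n}|=2^{4n}$, the internal direct product structure, and the isomorphism with $C_{2^{2n}}\times C_{2^n}\times C_{2^n}$.

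For the second (``$b$'') presentation and its direct product structure, I would appeal to the automorphism $a\leftrightarrow b$, $c\leftrightarrow c^{-1}$ of $J$ highlighted after (\ref{relgpdos}). Because the conditions $V^{2^n}\in Z(J)$ and $V^{2^{2n}}=1$ are automorphism-invariant, this map stabilizes $M_n$, $L_{2n}$, and hence $M_n\cap L_{2n}$; applied to the three $a$-generators above it produces $b^{2^{m+2h-1}}$, $b^{2^{2m+h-2}}c^{-2^{m+h-1}}$, and $a^{2^{m+h-1}}b^{2^{m+h-1}}$, and the middle cyclic group coincides with $\langle b^{2^{2m+h-2}}c^{2^{m+h-1}}\rangle$ since a cyclic group equals the one generated by the inverse of any of its generators. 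The internal direct product decomposition is carried across by this automorphism. There is no serious obstacle here: the whole argument reduces to order bookkeeping in the three cyclic factors of $Z_3(J)$, with the only mildly delicate point being the case split $n\ge h$ versus $n<h$ in the description of $L_{2n}$, both cases yielding the same intersection.
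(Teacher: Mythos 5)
Your factor-by-factor computation of the first (``$a$'') presentation is correct and is precisely what the paper's terse ``immediate consequence of Propositions \ref{muevoandrea2} and \ref{muevoandrea3}'' amounts to: setting $g_1=a^{2^m}$, $g_2=a^{2^{2m-2}}c^{2^{m-1}}$, $g_3=a^{2^m}b^{2^m}$, the listed generators of $M_n$ and $L_{2n}$ are powers of the $g_i$, so both groups split along the direct decomposition of $Z_3(J)$ from Proposition \ref{muevoandrea}, the intersection is computed coordinatewise, and your order bookkeeping (including the case split $n\ge h$ versus $n<h$) checks out, giving $\langle g_1^{2^{2h-1}}\rangle\times\langle g_2^{2^{h}}\rangle\times\langle g_3^{2^{h-1}}\rangle\cong C_{2^{2n}}\times C_{2^n}\times C_{2^n}$.

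Your derivation of the second (``$b$'') presentation, however, has a genuine gap. The automorphism $a\leftrightarrow b$, $c\leftrightarrow c^{-1}$ sends $a^{2^{2m+h-2}}c^{2^{m+h-1}}$ to $b^{2^{2m+h-2}}c^{-2^{m+h-1}}$, and the claim that this generates the same cyclic group as $b^{2^{2m+h-2}}c^{2^{m+h-1}}$ is false for $n\ge 1$; the justification offered does not apply, because the inverse of $b^{u}c^{-v}$ is $b^{-u}c^{v}$, not $b^{u}c^{v}$. Concretely, in the coordinates above one has $b^{2^{2m+h-2}}=g_1^{-2^{m+h-2}}$ (since $g_3^{2^{m+h-2}}=1$ for $h\ge 1$) and $c^{2^{m+h-1}}=g_1^{-2^{m+h-2}}g_2^{2^h}$, so $b^{2^{2m+h-2}}c^{-2^{m+h-1}}=g_2^{-2^h}$ lies in the second direct factor, whereas $b^{2^{2m+h-2}}c^{2^{m+h-1}}=g_1^{-2^{m+h-1}}g_2^{2^h}$ carries a nontrivial $g_1$-component of order $2^n$; the two elements differ by $c^{2^{m+h}}=g_1^{-2^{m+h-1}}g_2^{2^{h+1}}$, which for $n\ge 1$ lies in neither cyclic group. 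So the automorphism does not carry the first displayed decomposition onto the second, and this step of your argument fails. The second statement is nonetheless true and can be repaired with a short direct argument: the three $b$-generators lie in $M_n\cap L_{2n}$ and generate it (from $b^{2^{m+2h-1}}=g_3^{2^{2h-1}}g_1^{-2^{2h-1}}$ and $g_3^{2^{h-1}}$ one recovers $g_1^{2^{2h-1}}$, hence $g_1^{2^{m+h-1}}$, hence $g_2^{2^h}$), and since their orders are $2^{2n}$, $2^{n}$, $2^{n}$ with product equal to $|M_n\cap L_{2n}|=2^{4n}$, the product of the three cyclic subgroups is automatically direct. Alternatively, rerun your coordinatewise argument using the $b$-form of the decomposition of $Z_3(J)$ in Proposition \ref{muevoandrea} rather than transporting the $a$-form by the automorphism.
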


\begin{proof} Immediate consequence of Propositions \ref{muevoandrea2} and \ref{muevoandrea3}.
\end{proof}

\begin{prop}\label{ordnzdos} We have $\Omega_n(Z(J))=\langle a^{2^{2m+h-1}}\rangle=\langle  b^{2^{2m+h-1}}\rangle\subseteq N_2$.
In particular, all $z_i$ appearing in (\ref{vidos})  are in $N_2$.
\end{prop}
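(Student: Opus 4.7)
The proof should closely mirror the one of Proposition~\ref{ordnz} in the odd case, since the structural ingredients are already in place.

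The plan is to first identify $\Omega_n(Z(J))$ explicitly. By (\ref{zjdos}), $Z(J)=\langle a^{2^{2m-1}}\rangle$ is cyclic of order $2^m$ (since $a$ has order $2^{3m-1}$), and by the fundamental relation (\ref{funrel2}) it also equals $\langle b^{2^{2m-1}}\rangle$. Using $m=h+n$ with $h\geq 2$, we have $n<m$, so the cyclic group $Z(J)$ admits a unique subgroup of order $2^n$, namely the $2^{m-n}$-th powers of a generator. Computing, $(a^{2^{2m-1}})^{2^{m-n}}=a^{2^{3m-n-1}}=a^{2^{2m+h-1}}$, and likewise for $b$. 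Hence
\[
\Omega_n(Z(J))=\langle a^{2^{2m+h-1}}\rangle=\langle b^{2^{2m+h-1}}\rangle.
\]

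Next I would show the inclusion $\Omega_n(Z(J))\subseteq N_2$. From (\ref{vipdos}) with $i=1$ we know $V_1^{2^n}\in Z(J)$, and by Proposition~\ref{v2ndos} (applied to $i=1$, which is odd) $V_1$ has order $2^{2n}$. Therefore $V_1^{2^n}$ has order exactly $2^n$, so $\langle V_1^{2^n}\rangle$ is a subgroup of $Z(J)$ of order $2^n$. By uniqueness,
\[
\langle V_1^{2^n}\rangle=\Omega_n(Z(J)),
\]
and since $V_1\in N_2$, the inclusion follows.

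Finally, for the last assertion, each $z_i$ appearing in (\ref{vidos}) lies in $Z(J)$ by construction, and the observation (already made in the text just after (\ref{vipdos}) and obtained from Theorem~\ref{fern}) that $z_i^{2^n}=1$ places $z_i$ in $\Omega_n(Z(J))$, hence in $N_2$. No genuine obstacle is anticipated; the proof is essentially a bookkeeping argument, with the only mildly delicate point being the exponent calculation $3m-n-1=2m+h-1$ to confirm the two stated generators agree.
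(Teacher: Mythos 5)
Your proposal is correct and follows essentially the same route as the paper: the key step in both is that $V_1^{2^n}$ lies in the cyclic group $Z(J)$ and has exact order $2^n$ (via Proposition \ref{v2ndos}), hence generates its unique subgroup of order $2^n$, which is $\Omega_n(Z(J))=\langle a^{2^{2m+h-1}}\rangle$. Your explicit exponent check $3m-n-1=2m+h-1$ and the concluding remark on the $z_i$ are correct bookkeeping that the paper leaves implicit.
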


\begin{proof} We know from \eqref{vipdos} that
$V_1^{2^n}\in Z(J)$. By Proposition \ref{v2ndos}, the order of $V_1^{2^n}$ is~$2^n$.
Thus $\langle V_1^{2^n}\rangle$ is a subgroup of $Z(J)$ of order $2^n$.
Being cyclic, $Z(J)$ has only one subgroup of order $2^n$, namely $\langle a^{2^{2m+h-1}}\rangle=\langle  b^{2^{2m+h-1}}\rangle$.
Thus $N_p\supseteq \langle V_1^{2^n}\rangle=\langle a^{2^{2m+h-1}}\rangle=\Omega_n(Z(J))$.
\end{proof}

\begin{prop}
\label{nor12} We have $c^{2^{m+h-1}}a^{2^{2m+h-2}}\in N_2$ and $c^{2^{m+h-1}}b^{2^{2m+h-2}}\in N_2$,
where these elements have order $2^n$.
\end{prop}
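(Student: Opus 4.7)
The plan is to conjugate $V_1\in N_2$ by $a$ and by $b$, observe that the resulting commutators lie in $N_2$ by normality, and extract the two asserted elements by absorbing high-valuation central noise into $\Omega_n(Z(J))\subseteq N_2$ (Proposition \ref{ordnzdos}). Since $m>1$ throughout this subsection, $Z_3(J)$ is abelian by \cite[Proposition 9.1]{MS}, which is what makes the bookkeeping tractable.

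Writing $V_1=a^A b^B c^C z_1$ as in (\ref{vidos}), one has $v_2(A)=v_2(B)=m+h-1$, $v_2(C)=m+2h-2$, and $z_1\in\Omega_n(Z(J))$. Since $b^B$ and $c^C$ lie in $Z_3(J)$ and commute with their commutators with $a$, I split
\[
[V_1,a]=V_1^a V_1^{-1}=[b^B,a]\cdot[c^C,a].
\]
Applying Theorem \ref{fern}, formula (\ref{eq.fer3}) with $i=1$, $j=B$ (followed by inversion via $[x,y]^{-1}=[y,x]$) yields
\[
[b^B,a]=a^{2^{2m}\ell^2\phi(B)+2^{2m+1}\ell^2\varphi(B)}\,c^{-B}\,b^{-2^m\ell\phi(B)},
\]
and (\ref{eq.fer}) with $i=C$, $j=1$ gives $[c^C,a]=a^{-2^m\ell C-2^{2m}\ell^2\phi(C)}$. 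A direct valuation count shows that every $a$-contribution has $v_2\geq 2m+h-1$ when $h\geq 2$, hence lies in $\Omega_n(Z(J))=\langle a^{2^{2m+h-1}}\rangle\subseteq N_2$. Therefore $[V_1,a]\equiv c^{-B}b^{-2^m\ell\phi(B)}\pmod{N_2}$.

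Writing $-B=2^{m+h-1}s_1$ and $-2^m\ell\phi(B)=2^{2m+h-2}s_2$ with $s_1,s_2$ odd, I pass to the abelian quotient $Z_2(J)/\Omega_n(Z(J))$, in which $c^{2^{m+h-1}}$ has order $2^n$ and $b^{2^{2m+h-2}}$ has order $2$. Choosing $k\equiv s_1^{-1}\pmod{2^n}$, which is automatically odd so $ks_2\equiv 1\pmod 2$, and raising the above residue to the $k$-th power extracts $c^{2^{m+h-1}}b^{2^{2m+h-2}}\in N_2$. The parallel computation of $[V_1,b]$ via (\ref{eq.fer2}) and (\ref{eq.fer3}) with $i=A$, $j=1$ produces $c^{2^{m+h-1}}a^{2^{2m+h-2}}\in N_2$ by the same absorption-plus-odd-power argument. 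For the order, $a^{2^{2m+h-2}}\in Z(J)$ forces $(c^{2^{m+h-1}}a^{2^{2m+h-2}})^{2^n}=c^{2^{2m-1}}a^{2^{3m-2}}=c^{2^{2m-1}}\cdot c^{2^{2m-1}}=1$ via (\ref{funrel2}), while the $2^{n-1}$-th power $c^{2^{2m-2}}a^{2^{3m-3}}$ is nontrivial modulo $Z(J)$ since (\ref{inter22}) gives $\langle a\rangle\cap\langle c\rangle=\langle c^{2^{2m-1}}\rangle$, forcing $c^{2^{2m-2}}\notin Z(J)$.

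The main obstacle will be the meticulous tracking of 2-adic valuations through the $\xi(i,j)$ correction in (\ref{eq.fer3}): one must verify that every off-target $a$-contribution really does have valuation at least $2m+h-1$, especially at the boundary $h=2$ where several of these inequalities become tight, and that no subtle cross term in $V_1^b$ spoils the analog of the clean decomposition obtained for $[V_1,a]$.
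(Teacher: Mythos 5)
Your proof is correct and takes essentially the same route as the paper's: conjugate $V_1\in N_2$ by a generator, expand via Theorem \ref{fern}, absorb every $a$-power term (all of which indeed have $2$-adic valuation at least $2m+h-1$) into $\Omega_n(Z(J))\subseteq N_2$ by Proposition \ref{ordnzdos}, and extract the target element by raising to a suitable odd power; the order computation via (\ref{funrel2}) and (\ref{inter22}) is also the paper's. The only cosmetic difference is that the paper conjugates by $a$ alone and then converts $b^{2^{2m+h-2}}$ into $a^{-2^{2m+h-2}}$ using $a^{2^{2m-1}}b^{2^{2m-1}}=1$, whereas you carry out the parallel conjugation by $b$.
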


\begin{proof} As $V_1\in Z_3(J)$, we have $V_1^a\equiv V_1\mod Z_2(J)$. Carefully using Theorem \ref{fern} we find that
$$
V_1^a=V_1 c^{2^{m+h-1}x}b^{2^{2m+h-2}y}u,
$$
where $x,y\in\Z$ are odd and $u\in\langle a^{2m+h-1}\rangle$. It follows from Proposition \ref{ordnzdos} that
$$
c^{2^{m+h-1}}b^{2^{2m+h-2}e}\in N_2,
$$
for some odd integer $e$. But $a^{2m+h-1}\in N_2$ and $2m+h-2\geq 2m$, so dividing $e$ by 2 and using (\ref{funrel2}) yields
$$
c^{2^{m+h-1}}a^{2^{2m+h-2}}\in N_2, c^{2^{m+h-1}}b^{2^{2m+h-2}}\in N_2.
$$ 
It now follows from (\ref{funrel2}) and (\ref{inter22}) that the stated orders are correct.
\end{proof}

\begin{cor}
\label{potc} We have $c^{2^{m+h}}\in N_2$. In particular, all $c^{-2^{m-2}(\alpha^i-1)(\mu^i-1)\ell}$ 
appearing in (\ref{vidos})  are in $\langle c^{2^{m+h}}\rangle$
and hence in $N_2$.
\end{cor}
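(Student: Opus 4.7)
The plan is to deduce the main inclusion $c^{2^{m+h}} \in N_2$ by multiplying together the two elements supplied by Proposition \ref{nor12}, and then to read off the second statement from a valuation estimate.

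First, observe that in this subsection $h \geq 2$, so $m = h + n \geq 2$ and hence $Z_3(J)$ is abelian by \cite[Proposition 9.1]{MS}. The elements $c^{2^{m+h-1}}a^{2^{2m+h-2}}$ and $c^{2^{m+h-1}}b^{2^{2m+h-2}}$ supplied by Proposition \ref{nor12} both lie in $Z_3(J) \cap N_2$, so their product lies in $N_2$ and can be computed in the abelian group $Z_3(J)$:
\[
\bigl(c^{2^{m+h-1}}a^{2^{2m+h-2}}\bigr)\bigl(c^{2^{m+h-1}}b^{2^{2m+h-2}}\bigr) = c^{2^{m+h}} a^{2^{2m+h-2}} b^{2^{2m+h-2}}.
\]
Since $2m + h - 2 = (2m-1) + (h-1)$ with $h - 1 \geq 1$, the fundamental relation $a^{2^{2m-1}}b^{2^{2m-1}} = 1$ from (\ref{funrel2}), raised to the power $2^{h-1}$ inside the abelian subgroup $Z_3(J)$, yields $a^{2^{2m+h-2}} b^{2^{2m+h-2}} = 1$. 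Therefore $c^{2^{m+h}} \in N_2$.

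For the second assertion, I would simply estimate the $2$-adic valuation of the exponent appearing in~(\ref{vidos}). By (\ref{evvv}) we have $v_2(\alpha^i - 1) \geq h$ and $v_2(\mu^i - 1) \geq h$, while $v_2(\ell) = 0$, so
\[
v_2\bigl(2^{m-2}(\alpha^i - 1)(\mu^i - 1)\ell\bigr) \geq (m-2) + 2h = m + h + (h - 2) \geq m + h,
\]
using $h \geq 2$. Hence $2^{m+h}$ divides $2^{m-2}(\alpha^i - 1)(\mu^i - 1)\ell$, so $c^{-2^{m-2}(\alpha^i - 1)(\mu^i - 1)\ell} \in \langle c^{2^{m+h}} \rangle \subseteq N_2$, as claimed. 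The only mild subtlety is the use of the abelianness of $Z_3(J)$ when combining the two products from Proposition~\ref{nor12}, but this is guaranteed by $m \geq 2$, so no obstacle arises.
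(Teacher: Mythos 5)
Your proof is correct, and the second half (the valuation estimate $v_2(2^{m-2}(\alpha^i-1)(\mu^i-1)\ell)\geq m+2h-2\geq m+h$) is exactly the paper's argument. The first half is a small but genuine variation: the paper takes the \emph{square} of the single element $c^{2^{m+h-1}}a^{2^{2m+h-2}}$ from Proposition \ref{nor12}, obtaining $c^{2^{m+h}}a^{2^{2m+h-1}}\in N_2$, and then removes the leftover factor by invoking Proposition \ref{ordnzdos}, which puts $a^{2^{2m+h-1}}$ into $N_2$; you instead multiply the \emph{two} elements of Proposition \ref{nor12} together and cancel the leftover $a^{2^{2m+h-2}}b^{2^{2m+h-2}}$ outright via the fundamental relation \eqref{funrel2} raised to the power $2^{h-1}$ (legitimate, since both factors are central and $2m+h-2\geq 2m-1$). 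Your route has the mild advantage of not needing Proposition \ref{ordnzdos} at this point, at the cost of using both elements of Proposition \ref{nor12} and the relation \eqref{funrel2}; also note that the commutation you need actually takes place inside the abelian group $Z_2(J)$ (all the $a$- and $b$-powers involved are central), so the appeal to the abelianness of $Z_3(J)$, while valid because $m=h+n\geq 2$, is stronger than necessary. Either way the conclusion $c^{2^{m+h}}\in N_2$ is correctly reached.
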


\begin{proof} By Proposition \ref{nor12}, $c^{2^{m+h-1}}a^{2^{2m+h-2}}\in N_2$, whence its square 
$c^{2^{m+h}}a^{2^{2m+h-1}}$ is also in $N_2$. It now follows from Proposition \ref{ordnzdos} that
$c^{2^{m+h}}\in N_2$.  

Since $\alpha^i\equiv 1\mod 2^h$, $\mu^i\equiv 1\mod 2^h$, we have $2^{m+2h-2}\mid 2^{m-2}(\alpha^i-1)(\mu^i-1)$. As $h\geq 2$,
we deduce $m+2h-2\geq m+h$. Therefore
$c^{-2^{m-2}(\alpha^i-1)(\mu^i-1)\ell}$ is in $\langle c^{2^{m+h}}\rangle$.
\end{proof}

%Recall the definition of $F_i$, $i\geq 1$, as given in (\ref{deffi}).

\begin{prop}
\label{mazda2} $N_2$ is the normal closure of $\{F_i\,|\, i\geq 1\}\cup\{a^{2^{2m+h-1}}, c^{2^{m+h}}\}$, 
with $F_i$ as in~(\ref{deffi}).
\end{prop}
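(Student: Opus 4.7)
Let $K$ denote the normal closure in $J$ of $\{F_i\mid i\geq 1\}\cup\{a^{2^{2m+h-1}},c^{2^{m+h}}\}$; the strategy is to show $K\subseteq N_2$ and $N_2\subseteq K$ by pushing everything into the abelian subgroup $Z_3(J)$ (which is abelian since $m=h+n\geq 2$) and using formula~(\ref{vidos}). Recall that $F_i=a^{-2^{m-1}(\alpha^i-1)}b^{2^{m-1}(\mu^i-1)}$, that by~(\ref{evvv}) the integers $\alpha^i-1$ and $\mu^i-1$ are divisible by $2^h$, and that each $F_i$ therefore lies in $\langle a^{2^{m+h-1}},b^{2^{m+h-1}}\rangle\subseteq Z_3(J)$.

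For the inclusion $K\subseteq N_2$, the element $a^{2^{2m+h-1}}$ lies in $N_2$ by Proposition~\ref{ordnzdos}, and $c^{2^{m+h}}$ lies in $N_2$ by Corollary~\ref{potc}. To obtain $F_i\in N_2$, rewrite~(\ref{vidos}) inside the abelian group $Z_3(J)$ as
\[
V_i \;=\; F_i^{\ell}\cdot c^{-2^{m-2}(\alpha^i-1)(\mu^i-1)\ell}\cdot z_i.
\]
By Corollary~\ref{potc} the $c$-factor is a power of $c^{2^{m+h}}$, and by Proposition~\ref{ordnzdos} we have $z_i\in\Omega_n(Z(J))=\langle a^{2^{2m+h-1}}\rangle$, so both factors lie in $N_2$. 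Consequently $F_i^{\ell}\in N_2$. Since $F_i$ has $2$-power order and $\ell$ is odd, $\ell$ is invertible modulo the order of $F_i$, and therefore $F_i$ itself belongs to $N_2$.

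For the inclusion $N_2\subseteq K$, recall from Theorem~\ref{one} that $N_2$ is the normal closure of $\{V_i\mid i\geq 1\}$, so it suffices to show each $V_i\in K$. Reading~(\ref{vidos}) again within $Z_3(J)$, the factor $F_i^\ell$ lies in $K$ by definition, the $c$-factor lies in $\langle c^{2^{m+h}}\rangle\subseteq K$ by the divisibility argument of Corollary~\ref{potc}, and $z_i\in\Omega_n(Z(J))=\langle a^{2^{2m+h-1}}\rangle\subseteq K$ by Proposition~\ref{ordnzdos}. Multiplying gives $V_i\in K$, which completes both inclusions.

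The only genuinely delicate point is the passage from $F_i^\ell\in N_2$ to $F_i\in N_2$, which relies on the fact that $F_i$ has $2$-power order together with $\ell$ being odd; everything else is a direct application of the already-established facts about $Z_3(J)$, $\Omega_n(Z(J))$, and the $c$-exponent estimate from Corollary~\ref{potc}. All remaining manipulations are purely arithmetic inside the abelian group $Z_3(J)$.
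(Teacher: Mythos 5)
Your proof is correct and takes essentially the same route as the paper, whose proof is a one-line citation of (\ref{vidos}) together with Propositions \ref{ordnzdos} and \ref{potc}. You have simply made explicit what that citation leaves implicit: the decomposition $V_i=F_i^{\ell}\,c^{-2^{m-2}(\alpha^i-1)(\mu^i-1)\ell}\,z_i$ inside the abelian subgroup $Z_3(J)$, and the passage from $F_i^{\ell}$ to $F_i$ using that $\ell$ is odd and $F_i$ has $2$-power order.
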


\begin{proof} This follows from (\ref{vidos}) and Propositions \ref{ordnzdos} and \ref{potc}.
 \end{proof}

We have $\alpha=1+2^h t$, so that $\mu=1-2^h t+2^{2h} f$, where $f$ is odd. Thus $\mu=1+2^h t_0$,
where $t_0=-t+2^h f$ is odd, and  $\alpha=1-2^h t_0+2^{2h}f_0$, where $f_0$ is odd. 

Using (\ref{evvv}) and $v_2(\mu-\alpha)=h+1$,
we deduce from (\ref{vui}) with $i=2$ that
\begin{equation}
\label{mi1}
a^{2^{m+2h}}\in N_2,\; b^{2^{m+2h}}\in N_2,
\end{equation}
and from (\ref{vui2}) that
\begin{equation}
\label{mi2}
F_i\in \langle F_1, a^{2^{m+2h}}\rangle,\; F_i\in \langle F_1, b^{2^{m+2h}}\rangle,\quad i\geq 1.
\end{equation}
On the other hand, taking $i=1$ in (\ref{deffi}) yields
$$
F_1=a^{-2^{m+h-1}t} b^{-2^{m+h-1}t}b^{2^{m+2h-1}f}=a^{-2^{m+h-1}t} b^{-2^{m+h-1}t}b^{-2^{m+2h-1}t}b^{2^{m+2h-1}(f+t)}.
$$
As $t$ is odd, $f+t$ is even, and $b^{2^{m+2h}}\in N_2$, we deduce
\begin{equation}
\label{mi3}
a^{2^{m+h-1}} b^{2^{m+h-1}(1+2^h)}\in N_2,\; F_1\in \langle a^{2^{m+h-1}} b^{2^{m+h-1}(1+2^h)}, b^{2^{m+2h}} \rangle.
\end{equation}
Likewise, from
$$
F_1=a^{2^{m+h-1}t_0} b^{2^{m+h-1}t_0}a^{-2^{m+2h-1}f_0}=a^{2^{m+h-1}t_0} b^{2^{m+h-1}t_0}a^{2^{m+2h-1}t_0}a^{-2^{m+2h-1}(f_0+t_0)},
$$
we infer 
\begin{equation}
\label{mi4}
a^{2^{m+h-1}(1+2^h)} b^{2^{m+h-1}}\in N_2,\; F_1\in \langle a^{2^{m+h-1}(1+2^h)} b^{2^{m+h-1}}, a^{2^{m+2h}} \rangle.
\end{equation}

If $n=0$ then $h>0$ and Theorem \ref{tres} imply $W_2\cong G_2$, with $G_2$ described in detail in \cite{MS}.
Thus, we assume for the remainder of this subsection that $n>0$.

Consider the subgroup $U$ of $N_2$, defined by
$$
\begin{aligned}
U &=\langle a^{2^{m+h-1}(1+2^h)} b^{2^{m+h-1}}, a^{2^{m+2h}}, a^{2^{2m+h-2}}c^{2^{m+h-1}}\rangle\\
&=\langle a^{2^{m+h-1}} b^{2^{m+h-1}(1+2^h)}, b^{2^{m+2h}}, b^{2^{2m+h-2}}c^{2^{m+h-1}}\rangle.
\end{aligned}
$$
It follows easily from Proposition \ref{mlndos} that 
$$
U\cong C_{2^{2n-1}}\times C_{2^n}\times C_{2^n},\; [M_n\cap L_{2n}:U]=2.
$$
Moreover, a careful application of Theorem \ref{fern} shows that $U$ is normal in $J$. We deduce from
Propositions  \ref{nor12} and \ref{mazda2}, as well as (\ref{mi1}), (\ref{mi2}),  (\ref{mi3}), and (\ref{mi4}),
that $N_2=U$. %We have shown the following result.

\begin{theorem}\label{main2} Suppose $n>0$. Then
$$
\begin{aligned}
N_2 &=\langle a^{2^{m+h-1}(1+2^h)} b^{2^{m+h-1}}, a^{2^{m+2h}}, a^{2^{2m+h-2}}c^{2^{m+h-1}}\rangle\\
&=\langle a^{2^{m+h-1}} b^{2^{m+h-1}(1+2^h)}, b^{2^{m+2h}}, b^{2^{2m+h-2}}c^{2^{m+h-1}}\rangle
\cong C_{2^{2n-1}}\times C_{2^n}\times C_{2^n};
\end{aligned}
$$
$$
|N_2|=2^{4n-1},\; |G_2/N_2|=2^{7h+3n-2},\; |W_2|=2^{7h+4n-2}=2^{7m-3n-2};
$$
$G_2/N_2$ is generated by elements $a_0,b_0,c_0$ subject to defining relations (\ref{rel0}),
\begin{equation}
\label{pel1}
a_0^{2^{m+2h}}=1=b_0^{2^{m+2h}}, a_0^{2^{2m+h-2}}c_0^{2^{m+h-1}}=1=b_0^{2^{2m+h-2}}c_0^{2^{m+h-1}},
\end{equation}
\begin{equation}
\label{pel2}
a_0^{2^{m+h-1}(1+2^h)}b_0^{2^{m+h-1}}=1=a_0^{2^{m+h-1}}b_0^{2^{m+h-1}(1+2^h)},
\end{equation}
and every element of $G_2/N_2$ can be written in one and only way in the form
\begin{equation}
\label{write2}
a_0^{e_1} b_0^{e_2}c_0^{e_3},\quad 0\leq e_1<2^{m+2h}, 0\leq e_2<2^{m+h-1}, 0\leq e_3<2^{m+h-1};
\end{equation}
$W_2$ is generated by elements $a_0,b_0,c_0,d_0$ subject to the defining relations (\ref{rel0}), (\ref{pel1}), (\ref{pel2}),
and
\begin{equation}
\label{pel3}
a_0^{d_0}=a_0^{\alpha^{qk}},\; {}^{d_0} b_0=b_0^{\alpha^{qk}},\; d_0^{2^n}=c_0^q,
\end{equation}
and every element of $W_2$ can be written in one and only way in the form
$$
a_0^{e_1} b_0^{e_2}c_0^{e_3}d_0^{e_4},\quad 0\leq e_1<2^{m+2h}, 0\leq e_2<2^{m+h-1}, 0\leq e_3<2^{m+h-1}, 0\leq e_4<2^{n}.
$$
Moreover, the automorphism $a\leftrightarrow b$, $c\leftrightarrow c^{-1}$ of $G_2$ induces an automorphism $a_0\leftrightarrow b_0$, 
$c_0\leftrightarrow c_0^{-1}$ on $G_2/N_2$, which extends to an automorphism of $W_2$ via $d_0\leftrightarrow d_0^{-1}$.
\end{theorem}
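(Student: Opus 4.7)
The plan is to treat this as a packaging theorem: the paragraph immediately preceding the statement already proves the central identity $N_2=U$, so what remains is to assemble the isomorphism type, the orders, the presentations, the normal forms, and the symmetry statement. First I would read off the isomorphism type $N_2\cong C_{2^{2n-1}}\times C_{2^n}\times C_{2^n}$ directly from Proposition~\ref{mlndos}: the three listed generators of $U$ sit inside $M_n\cap L_{2n}\cong C_{2^{2n}}\times C_{2^n}\times C_{2^n}$ as an index-$2$ subgroup with the displayed direct factors of orders $2^{2n-1}$, $2^n$, $2^n$. This gives $|N_2|=2^{4n-1}$; combining with $|J|=2^{7m-3}$ from \cite[Theorem~7.1]{MS} (cf.\ (\ref{relgpdos})) and $m=h+n$ yields $|G_2/N_2|=2^{7h+3n-2}$, and then Theorem~\ref{dos} gives $|W_2|=2^n|G_2/N_2|=2^{7h+4n-2}$.

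For the presentation of $G_2/N_2$, I would start from the defining relations (\ref{relgpdos}) of $G_2$ and impose the three generators of $N_2$ as new relators. The result is exactly (\ref{rel0}) together with (\ref{pel1}) and (\ref{pel2}); the two alternative pairs of relations in each of (\ref{pel1}) and (\ref{pel2}) correspond to the two equivalent generating sets of $U$, their equivalence traceable to the fundamental relation $a^{2^{2m-1}}b^{2^{2m-1}}=1$ of (\ref{funrel2}) together with Corollary~\ref{potc}. The normal form (\ref{write2}) then comes in two halves: existence of a representative $a_0^{e_1}b_0^{e_2}c_0^{e_3}$ is inherited by projection from the analogous normal form for $G_2$ in \cite[Theorem~7.1]{MS} after reducing the exponents modulo the new relations, and uniqueness is forced by the count $2^{m+2h}\cdot 2^{m+h-1}\cdot 2^{m+h-1}=2^{7h+3n-2}=|G_2/N_2|$.

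The $W_2$ portion is then immediate from Theorem~\ref{dos}: adjoining $d_0$ subject to (\ref{pel3}) extends the presentation, and stacking $d_0^{e_4}$ onto the $G_2/N_2$ normal form produces the $W_2$ normal form, with uniqueness again enforced by matching the count against $|W_2|=2^{7h+4n-2}$. For the symmetry assertion, I would verify that the involution $a\leftrightarrow b$, $c\leftrightarrow c^{-1}$ of $G_2$ (noted after (\ref{relgpdos})) carries the first generating set of $N_2$ onto the second: the first generator lands on the first generator of the second description, the second on the second, and the third goes to $b^{2^{2m+h-2}}c^{-2^{m+h-1}}$, which lies in $N_2$ because $b^{2^{2m+h-2}}c^{2^{m+h-1}}\in N_2$ already and $c^{2^{m+h}}\in N_2$ by Corollary~\ref{potc}. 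Hence the automorphism descends to $G_2/N_2$, and a direct check against (\ref{pel3}) shows that it extends to $W_2$ via $d_0\leftrightarrow d_0^{-1}$.

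The genuinely hard work, namely the explicit determination of the three generators of $N_2$ and the commutator-level verification that $U$ is normal and contains every $V_i$, has already been absorbed into Propositions \ref{nor12}--\ref{mazda2} and identities (\ref{mi1})--(\ref{mi4}). Within the proof of the theorem itself, the only delicate step is the bookkeeping that the two generating sets for $N_2$ really do define the same subgroup, and correspondingly that the two pairs of relations in each of (\ref{pel1}) and (\ref{pel2}) are mutually equivalent modulo (\ref{funrel2}) and $c_0^{2^{m+h}}=1$; beyond this, the argument is a sequence of order counts and a mechanical invocation of Theorem~\ref{dos}.
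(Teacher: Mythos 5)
Your proposal is correct and follows essentially the same route as the paper: the published proof likewise observes that everything through $|N_2|=2^{4n-1}$ is already established in the paragraph defining $U$, then divides $|J|=2^{7m-3}$ by $|N_2|$ to get $|G_2/N_2|$, invokes Theorem~\ref{dos} for $|W_2|$ and the $d_0$-extension, obtains the presentation by adjoining the generators of $N_2$ as relators to (\ref{relgpdos}), and gets the normal form from the spanning statement in \cite[Lemma 6.1 or Theorem 7.1]{MS} plus the order count. Your explicit check that the involution $a\leftrightarrow b$, $c\leftrightarrow c^{-1}$ carries one generating set of $N_2$ onto the other (using $c^{2^{m+h}}\in N_2$ from Corollary~\ref{potc} and the relation $a^{2^{2m-1}}b^{2^{2m-1}}=1$) is a correct filling-in of a detail the paper leaves implicit.
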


\begin{proof} All statements up to and including $2^{4n-1}$ have already been proven. It now follows from \cite[Theorem 7.1]{MS} that
$
|G_2/N_2|=2^{7h+7n-3}/2^{4n-1}= 2^{7h+3n-2},
$
whence by Theorem \ref{dos}
$
|W_2|=2^{7h+4n-2}.
$
We see from $N_2=\langle a^{2^{m+h-1}(1+2^h)} b^{2^{m+h-1}}, a^{2^{m+2h}}, a^{2^{2m+h-2}}c^{2^{m+h-1}}\rangle$ and (\ref{relgpdos}) that
$G_2/N_2$ has the stated presentation. It follows from \cite[Lemma 6.1 or Theorem 7.1]{MS} that every element
of $G_2/N_2$ can the written as stated in (\ref{write2}). Uniqueness follows from the fact that 
$|G_2/N_2|=2^{7h+3n-2}$. The remaining statements about $W_2$ now follow from Theorem \ref{dos} and $|W_2|=2^{7h+4n-2}$.
\end{proof}

\medskip

\noindent{\bf The case $h=1$ and $t\neq -1$.} We assume throughout this subsection that $h=1$,  $t\neq -1$,
and $n>0$ (if $n=0$ then $h>0$ and Theorem \ref{tres} imply $W_2\cong G_2$, with $G_2$ described in \cite{MS}).
Thus $\alpha=1+2t$, where $2\nmid t$. Note that $m=g+n+1$, where $g=v_2(t+1)$.
We have $\mu=1-2 t+ 4 f$, with $f$ odd. Hence $\mu=1+2 t_0$,
where $t_0=-t+2 f$ is odd, and  $\alpha=1-2t_0+4f_0$, where $f_0$ is odd.

According to (\ref{defui}) and (\ref{vidos}), we have
\begin{equation}
\label{cot}
V_i=a^{-2^{m-1}(\alpha-1)u_i\ell}b^{2^{m-1}(\mu-1)v_i\ell}c^{-2^{m-2}(\alpha-1)(\mu-1)u_iv_i\ell}a^{2^{2m-1}(u_iv_i+2k_i)},
\end{equation}
for some integer $k_i$. Making use (\ref{funrel2}), we see  that
\begin{equation}
\label{cop}
V_i^{2^{m-1}}=a^{-2^{2m-2}\ell[(\alpha-1)u_i+(\mu-1)v_i]}c^{-2^{2m-3}(\alpha-1)(\mu-1)u_iv_i\ell}a^{2^{3m-2}(u_iv_i+2k_i)}.
\end{equation}

Taking $i=1$ in (\ref{cop}) and appealing to (\ref{funrel2}), we find that
$$
V_1^{2^{m-1}}=a^{-2^{2m-2}\ell[(\alpha-1)+(\mu-1)]}.
$$
Here $(\alpha-1)+(\mu-1)=4f$. As $f$ is odd, we see that $V_1^{2^{m-1}}=a^{2^{2m}x}$, where $x$ is odd.
Then
\begin{equation}
\label{was}
a^{2^{2m}}\in N_2
\end{equation}
and $V_1$ has order $2^{2m-2}$. Using Theorem \ref{fern} together with (\ref{funrel2}) and  (\ref{cop}) shows that
$$
V_1^a=V_1 c^{2^m e} b^{2^{2m-1} d},
$$
where $e$ and $d$ are odd integers. Thus $c^{2^m} b^{2^{2m-1} g}\in N_2$ for some odd integer $g$.
Here $b^{2^{2m}}\in N_2$ by (\ref{funrel2}) and (\ref{was}), which yields that 
\begin{equation}
\label{was2}
c^{2^m} b^{2^{2m-1}},\; c^{2^m} a^{2^{2m-1}},\; c^{2^{m+1}},\;a^{2^{2m}},\; b^{2^{2m}}\in N_2.
\end{equation}
Going back to (\ref{cot}) and noting that $u_iv_i\ell$ and
$u_iv_i+2k_i$ are both even or both odd, we deduce that 
\begin{equation}
\label{was3}
c^{-2^{m-2}(\alpha-1)(\mu-1)u_iv_i\ell}a^{2^{2m-1}(u_iv_i+2k_i)}\in N_2\cap \langle a^{2^{2m-1}}c^{2^m}, c^{2^{m+1}}, a^{2^{2m}}\rangle
\cap \langle b^{2^{2m-1}}c^{2^m}, c^{2^{m+1}}, b^{2^{2m}}\rangle,
\end{equation}
and therefore
$$
F_i=a^{-2^{m-1}(\alpha-1)u_i}b^{2^{m-1}(\mu-1)v_i}\in N_2,\quad i\geq 1.
$$

Recall that
$
v_2(\alpha-1)=1=v_2(\mu-1)
$
and
$$
v_2(\alpha-\mu)=v_2(\alpha^2-1)=v_2((\alpha-1)(\alpha+1))=v_2((\alpha-1)2(1+t))=2+g.
$$
Thus, in the notation of (\ref{defui}), we have
$$
v_2(2^{m-1}(\alpha-1)w_i)=m+2+g=v_2(2^{m-1}(\mu-1)w_i).
$$
Taking $i=2$ in (\ref{vui}), we deduce 
\begin{equation}
\label{was4}
a^{2^{2m+1-n}}\in N_2,\; b^{2^{2m+1-n}}\in N_2,
\end{equation}
while (\ref{vui2}) yields
\begin{equation}
\label{was5}
F_i\in \langle F_1, a^{2^{2m+1-n}}\rangle,\; F_i\in \langle F_1, b^{2^{2m+1-n}}\rangle,\quad i\geq 1.
\end{equation}
It now follows from (\ref{was}), (\ref{was2}), (\ref{was3}), (\ref{was4}), and (\ref{was5}) that
$N_2$ is the normal closure in $J$ of each of the following subsets:
$
\{F_1, a^{2m+1-n}, a^{2^{2m-1}}c^{2^m}\},\;\{F_1, b^{2m+1-n}, b^{2^{2m-1}}c^{2^m}\}.
$
But
$$
F_1=a^{-2^{m-1}(\alpha-1)} b^{2^{m-1}(\mu-1)}=a^{-2^{m-1}(\alpha-1)} b^{2^{m-1}(\alpha-1)} b^{2^{m-1}[(\mu-1)-(\alpha-1)]},
$$
where $v_2(\alpha-\mu)=2+g$. Since $b^{2^{2m+1-n}}\in N_2$, we deduce that
$$
a^{-2^m}b^{2^m}b^{2^{2m-n}}\in N_2,\; F_1\in \langle a^{-2^m}b^{2^m}b^{2^{2m-n}}, b^{2^{2m+1-n}}\rangle.
$$
It follows that $N_2$ is the  normal closure in $J$ of $\{a^{-2^m}b^{2^{m}(1+2^{m-n})}, a^{2m+1-n}, a^{2^{2m-1}}c^{2^m}\}$ as well as of 
$\{a^{2^{m}(1+2^{m-n})}b^{-2^m}, b^{2m+1-n}, b^{2^{2m-1}}c^{2^m}\}$. The subgroups generated by these subsets
are easily seen to be normal by means of Theorem \ref{fern}. Thus
$$
N_2=\langle a^{-2^m}b^{2^{m}(1+2^{m-n})}, a^{2m+1-n}, a^{2^{2m-1}}c^{2^m}\rangle=
\langle a^{2^{m}(1+2^{m-n})} b^{-2^m}, b^{2m+1-n}, b^{2^{2m-1}}c^{2^m}\rangle,
$$
This description of $N_2$ and Theorem \ref{muevoandrea} yield 
$N_2\cong C_{2^{m+n-2}}\times C_{2^{m-1}}\times C_{2^{m-1}}$. We have proven the following result.

\begin{theorem}\label{main3} The group
$$
N_2=\langle a^{-2^m}b^{2^{m}(1+2^{m-n})}, a^{2m+1-n}, a^{2^{2m-1}}c^{2^m}\rangle=
\langle a^{2^{m}(1+2^{m-n})}b^{-2^m}, b^{2m+1-n}, b^{2^{2m-1}}c^{2^m}\rangle
$$
is isomorphic to  $C_{2^{m+n-2}}\times C_{2^{m-1}}\times C_{2^{m-1}}$;
$$
|N_2|=2^{3m+n-4},\; |G_2/N_2|=2^{4m-n+1},\; |W_2|=2^{4m+1};
$$
$G_2/N_2$ is generated by elements $a_0,b_0,c_0$ subject to defining relations (\ref{rel0}),
\begin{equation}
\label{tel1}
a_0^{2^{2m+1-n}}=1=b_0^{2^{2m+1-n}}, a_0^{2^{2m-1}}c_0^{2^{m}}=1=b_0^{2^{2m-1}}c_0^{2^{m}},
\end{equation}
\begin{equation}
\label{tel2}
a_0^{2^{m}(1+2^{m-n})}b_0^{2^{m}}=1=b_0^{2^{m}(1+2^{m-n})}a_0^{2^{m}},
\end{equation}
and every element of $G_2/N_2$ can be written in one and only way in the form
\begin{equation}
\label{write3}
a_0^{e_1} b_0^{e_2}c_0^{e_3},\quad 0\leq e_1<2^{2m+1-n}, 0\leq e_2<2^{m}, 0\leq e_3<2^{m};
\end{equation}
$W_2$ is generated by elements $a_0,b_0,c_0,d_0$ subject to the defining relations (\ref{rel0}), (\ref{tel1}), (\ref{tel2}),
and
\begin{equation}
\label{tel3}
a_0^{d_0}=a_0^{\alpha^{qk}},\; {}^{d_0} b_0=b_0^{\alpha^{qk}},\; d_0^{2^n}=c_0^q,
\end{equation}
and every element of $W_2$ can be written in one and only way in the form
$$
a_0^{e_1} b_0^{e_2}c_0^{e_3}d_0^{e_4},\quad 0\leq e_1<2^{2m+1-n}, 0\leq e_2<2^{m}, 0\leq e_3<2^{m}, 0\leq e_4<2^{n}.
$$
Moreover, the automorphism $a\leftrightarrow b$, $c\leftrightarrow c^{-1}$ of $G_2$ induces an automorphism $a_0\leftrightarrow b_0$, 
$c_0\leftrightarrow c_0^{-1}$ on $G_2/N_2$, which extends to an automorphism of $W_2$ via $d_0\leftrightarrow d_0^{-1}$.
\end{theorem}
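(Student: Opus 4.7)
The description of $N_2$ and its isomorphism type have essentially been proven in the paragraph immediately preceding the theorem statement: it was shown there that $N_2$ is the normal closure in $J$ of $\{a^{-2^m}b^{2^{m}(1+2^{m-n})}, a^{2^{m+1-n}}, a^{2^{2m-1}}c^{2^m}\}$, that the subgroups generated by the two listed triples are normal in $J$ by direct use of Theorem \ref{fern}, and that they coincide with $N_2$; the isomorphism $N_2\cong C_{2^{m+n-2}}\times C_{2^{m-1}}\times C_{2^{m-1}}$ then follows from Proposition \ref{muevoandrea}. So the plan is to assemble these facts with order counts and standard quotient/extension arguments.

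First I would compute the three orders. From the isomorphism type, $|N_2|=2^{(m+n-2)+(m-1)+(m-1)}=2^{3m+n-4}$. Combining with $|G_2|=2^{7m-3}$ from \cite[Theorem 7.1]{MS} gives $|G_2/N_2|=2^{7m-3-(3m+n-4)}=2^{4m-n+1}$, and Theorem \ref{dos} then yields $|W_2|=2^n|G_2/N_2|=2^{4m+1}$.

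Next I would derive the presentation of $G_2/N_2$. The defining relations (\ref{relgpdos}) of $G_2$ descend to (\ref{rel0}) in the quotient, and imposing that the three explicit generators of $N_2$ become trivial gives precisely (\ref{tel1}) and (\ref{tel2}). To verify that these relations are sufficient, I would appeal to \cite[Lemma 6.1 or Theorem 7.1]{MS}, which shows that in $G_2$ every element has a representation $a^{e_1}b^{e_2}c^{e_3}$; consequently the same normal form (\ref{write3}) is available in $G_2/N_2$. Uniqueness then follows from the count: the number of triples $(e_1,e_2,e_3)$ with $0\le e_1<2^{2m+1-n}$, $0\le e_2<2^m$, $0\le e_3<2^m$ equals $2^{4m-n+1}=|G_2/N_2|$, forcing each element to have exactly one such expression.

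Finally, I would obtain the stated presentation and normal form of $W_2$ by combining the presentation of $G_2/N_2$ with Theorem \ref{dos}, which supplies the additional generator $d_0$ with the relations (\ref{tel3}) and ensures that $W_2/\langle a_0,b_0\rangle$ is cyclic of order $2^n$. The normal form for $W_2$ then follows from the normal form of $G_2/N_2$, the cyclicity of the quotient, and the order count $|W_2|=2^n|G_2/N_2|$. For the symmetry, I would note that the automorphism $a\leftrightarrow b$, $c\leftrightarrow c^{-1}$ of $G_2$ permutes the two generating sets given for $N_2$, so it descends to the claimed automorphism of $G_2/N_2$; extending it to $W_2$ by $d_0\leftrightarrow d_0^{-1}$ preserves (\ref{tel3}) because swapping $a_0$ with $b_0$ and inverting $d_0$ transforms each relation in (\ref{tel3}) into another valid consequence of the presentation. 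The main obstacle here is really bookkeeping rather than conceptual, since the delicate computation of $N_2$ was already completed before the theorem statement.
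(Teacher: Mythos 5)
Your proposal is correct and follows essentially the same route as the paper: the determination of $N_2$ and its isomorphism type is completed in the discussion preceding the theorem, and the remaining claims (orders via $|G_2|=2^{7m-3}$, the presentation from (\ref{relgpdos}) and the generators of $N_2$, the normal form from \cite[Lemma 6.1 or Theorem 7.1]{MS} plus an order count, and the $W_2$ statements from Theorem \ref{dos}) are assembled exactly as in the paper's proof of the parallel Theorem \ref{main2}. The only quibble is a transcription slip (your $a^{2^{m+1-n}}$ should be $a^{2^{2m+1-n}}$, coming from (\ref{was4})), which does not affect the argument.
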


In particular, if $W=W_1(3,3,2)=W_2$ we have $n=1$, and $m=3$, and $W$ has order $2^{13}$, in agreement with the order
given in \cite{HNO}.

\medskip

\noindent{\bf The case $\alpha=-1$, $n=0$.} Assume that $\alpha=-1=\alpha^\gamma$.
Then $G_2\cong Q_{16}$, the quaternion group of order 16, by \cite[Proposition 8.2]{MS}. 
Moreover, $G=G_2$ by \cite[Section 4]{M}, as the only prime factor
of $\alpha^\gamma-1$ is 2. As $\gamma$ is odd, Theorem \ref{tres} gives $W_2\cong G\cong Q_{16}$.

\medskip

\noindent{\bf The case $h=0$.} This impossible, since $\alpha^\gamma\equiv 1\mod 2^m$,
with $m\geq 1$, forces $\alpha$ to be odd.

\section{Commutator calculations in Case 3}\label{s6}

We keep the notation and hypotheses from Sections \ref{s2} and \ref{s3}, and assume further that $p=3$
and $\alpha^\gamma\equiv 7\mod 9$. Thus $\alpha^\gamma=1+3\ell$, where 
$\ell=-1+3g$, $g\in\Z$, and we set $\alpha_0=\alpha^\gamma$.

By \cite[Theorem 7.1 and Proposition 7.2]{MS}, we know that $a$ and $b$ have order 81, $c$ has order 27,
and $J$ has order $3^{10}$. By 
\cite[Theorem 8.1]{MS}, we have
$$Z(J)=\langle a^{27}\rangle, Z_2(J)=\langle a^{27},c^9\rangle, Z_3(J)=\langle a^9,b^9,c^9\rangle=J^9,
Z_4(J)=\langle a^9,b^9,c^3\rangle, 
$$
$$
Z_5(J)=\langle a^3,b^3,c^3\rangle=J^3,
Z_6(J)=\langle a^3,b^3,c\rangle, Z_7(J)=J.
$$
By Proposition \ref{adentro}, we know that $N_3\subseteq Z_5(J)$,
which has order $3^7$ by \cite[Proposition 9.3]{MS}, which also ensures that 
$Z_3(J)$ is abelian of order~$3^4$. Since $\alpha_0^3\equiv 1\mod 9$,
it follows that $Z_4(J)$ is also abelian, necessarily of
order $3^5$, as $|Z_4(J)/Z_3(J)|=3$ by the above uniqueness of expression. 
Furthermore, by \cite[Proposition 6.4]{MS}, we have the fundamental relation:
\begin{equation}
\label{futres}
a^{27}b^{27}=1.
\end{equation}
By \cite[Theorem 5.3]{MS}, $J$ has presentation
$$
\langle a,b\,|\, a^{[a,b]}=a^\alpha,\, b^{[b,a]}=b^\alpha, a^{81}=1, b^{81}=1\rangle,
$$
which yields the automorphism $a\leftrightarrow  b$, $c\leftrightarrow  c^{-1}$, of $J$.  Since
$$
(1+3\ell)^i\equiv 1+3i\ell+9\phi(i)\ell^2-27\varphi(i)\mod 81,\quad i\in\N,
$$
we easily obtain the following result.

\begin{prop}\label{prop.commutators3}
    For all $i,j\in\Z$ the following commutator formulas hold in $J$:
    \begin{align}
        &[c^i,a^j]
        = a^{-3\ell ij - 9\ell^2\phi(i)j+27\varphi(i)j},\label{eq.c31}\\
        &[c^i,b^j]
        = b^{3\ell ij - 9\ell^2\phi(i+1)j-27\varphi(i+2)j}.\label{eq.c32}
    \end{align}
    \end{prop}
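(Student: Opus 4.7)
The plan is to reduce both formulas to a direct computation using the defining relations of $J$ and the displayed congruence for $(1+3\ell)^i$ modulo $81$.

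First I would exploit the relations $a^{[a,b]} = a^{\alpha_0}$ and $^{[b,a]} b = b^{\alpha_0}$ from the presentation of $J$. Setting $c = [a,b]$ and noting $[b,a] = c^{-1}$, these translate (in standard conjugation notation) to
\[
c^{-1} a c = a^{\alpha_0}, \qquad c\, b\, c^{-1} = b^{\alpha_0}.
\]
A straightforward induction on $|i|$ then gives, for every integer $i$,
\[
c^{-i} a c^i = a^{\alpha_0^i}, \qquad c^{-i} b c^i = b^{\alpha_0^{-i}}.
\]
Raising both identities to the $j$-th power and multiplying by $a^j$ (resp.\ $b^j$) yields
\[
[c^i, a^j] = c^{-i} a^{-j} c^i a^j = a^{j(1-\alpha_0^i)}, \qquad [c^i, b^j] = b^{j(1-\alpha_0^{-i})}.
\]
Since $a$ and $b$ have order $81$, these exponents may be reduced modulo $81$.

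Second, I would invoke the displayed congruence
\[
(1+3\ell)^i \equiv 1 + 3i\ell + 9\phi(i)\ell^2 - 27\varphi(i) \pmod{81},
\]
which holds for every integer $i$: both sides satisfy the same recurrence $f(i+1) = (1+3\ell) f(i) \bmod 81$ (using $\phi(i)+i = \phi(i+1)$ and $\binom{i}{3} + \phi(i) = \binom{i+1}{3}$ together with $27\ell^3 \equiv -27 \pmod{81}$, a consequence of $\ell \equiv -1 \pmod 3$), and they agree at $i=0$. Substituting into $1-\alpha_0^i$ and multiplying by $j$ yields \eqref{eq.c31} on the nose.

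For \eqref{eq.c32} I would apply the same congruence with $i$ replaced by $-i$, using the polynomial identities $\phi(-i) = \phi(i+1)$ and $\varphi(-i) = -\varphi(i+2)$, to obtain
\[
\alpha_0^{-i} \equiv 1 - 3i\ell + 9\phi(i+1)\ell^2 + 27\varphi(i+2) \pmod{81}.
\]
Substituting into $b^{j(1-\alpha_0^{-i})}$ delivers \eqref{eq.c32}. There is no genuine obstacle here; the only mild subtlety is verifying that the stated congruence is valid for every integer $i$ (not only for $i\ge 0$ where it is the naive binomial expansion), and this is taken care of by the recurrence argument above. The rest is bookkeeping with the integer-valued polynomials $\phi$ and $\varphi$.
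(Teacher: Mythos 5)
Your proposal is correct and follows essentially the same route as the paper: the paper's (very terse) proof consists precisely of combining $a^{c^i}=a^{\alpha_0^i}$, $b^{c^i}=b^{\alpha_0^{-i}}$ with the displayed congruence for $(1+3\ell)^i$ modulo $81$, which is what you do. Your added checks — the recurrence argument extending the congruence to all $i\in\Z$ and the identities $\phi(-i)=\phi(i+1)$, $\varphi(-i)=-\varphi(i+2)$ — are exactly the bookkeeping the paper leaves implicit, and they are all verified correctly.
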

In particular,
\begin{equation}
\label{cab}
[c^3,a^9]=1=[c^3,b^9].
\end{equation}

An analogue of (\ref{eq.comj.4}), namely the calculation of $[a^r,b^s]$ for arbitrary integers $r,s$, is needed to determine $N_3$
and requires substantial work. Dividing $r$ and $s$ by 3, the computation of $[a^r,b^s]$
translates into the calculation of all commutators of the form
\begin{equation}
\label{goal}
[a^{e+3i},b^{f+3j}],
\end{equation}
where $e,f\in\{-1,0,1\}$ and $i,j\in\Z$. To deal with (\ref{goal}), 
recall that for an arbitrary group $T$, and $x,y,z\in T$, we have
\begin{equation}
\label{commu}
[x,yz]=[x,z][x,y]^z,\; [yz,x]=[y,x]^z\; [z,x].
\end{equation}
Thus by (\ref{commu}), 
\begin{equation}
\label{exp1}
[a^{e+3i},b^{f+3j}]=[a^{e},b^{f+3j}]^{a^{3i}}[a^{3i},b^{f+3j}],
\end{equation}
\begin{equation}
\label{exp2}
[a^{e},b^{f+3j}]^{a^{3i}}=[a^{e},b^{3j}]^{a^{3i}}[a^{e},b^{f}]^{b^{3j}a^{3i}},
\end{equation}
\begin{equation}
\label{exp3}
[a^{3i},b^{f+3j}]=[a^{3i},b^{3j}][a^{3i},b^{f}]^{b^{3j}},
\end{equation}
and therefore
\begin{equation}
\label{delta5}
[a^{e+3i},b^{f+3j}]=[a^{e},b^{3j}]^{a^{3i}}[a^{e},b^{f}]^{b^{3j}a^{3i}}[a^{3i},b^{3j}][a^{3i},b^{f}]^{b^{3j}}.
\end{equation}
We proceed to compute all commutators
\begin{equation}
\label{exp4}
[a^{3i},b^{3j}],
\end{equation}
\begin{equation}
\label{exp5}
[a^{e},b^{3j}]^{a^{3i}},
\end{equation}
\begin{equation}
\label{exp6}
[a^{3i},b^{f}]^{b^{3j}},
\end{equation}
\begin{equation}
\label{exp7}
[a^{e},b^{f}]^{b^{3j}a^{3i}},
\end{equation}
as well as the right hand of (\ref{delta5}), which yields (\ref{goal}).

We begin by finding an explicit formula for $[a,b^i]$ for an arbitrary integer $i$.
From $a^c=a^{\alpha_0}$ and ${}^c b=b^{\alpha_0}$ we easily obtain by induction
$$
a^{b^i}=a b^{(\alpha_0-1)(\alpha_0+2\alpha_0^2+\cdots+(i-1)\alpha_0^{i-1})} c^i,\quad i\geq 2,
$$
whence
\begin{equation}
\label{con3}
[a,b^i]=b^{(\alpha_0-1)(\alpha_0+2\alpha_0^2+\cdots+(i-1)\alpha_0^{i-1})} c^i,\quad i\geq 2.
\end{equation}
Recalling that $b^{81}=1$, $\alpha_0\equiv 1\mod 3$, and $\phi(j)=j(j-1)/2$, we note that
\begin{equation}
\label{upa}
\alpha_0^j\equiv (1+(\alpha_0-1)^j)\equiv 1+(\alpha_0-1)j+(\alpha_0-1)^2 \phi(j)\mod 27,\quad j\geq 0.
\end{equation}
Set 
$$
f(i)=1^2+\cdots+(i-1)^2\text{ for }i\geq 2,\text{ and }g(i)=2\phi(2)+\cdots+(i-1)\phi(i-1)\text{ for }i\geq 3,
$$ 
and further define $g(2)=0$. Then by (\ref{upa}),
\begin{equation}
\label{upa2}
\alpha_0+2\alpha_0^2+\cdots+(i-1)\alpha_0^{i-1}\equiv \phi(i)+ (\alpha_0-1)f(i)+
(\alpha_0-1)^2g(i)\mod 27,\quad i\geq 2.
\end{equation}
Since $\ell^3\equiv -1\mod 3$, we deduce from (\ref{con3}) and (\ref{upa2}) that
\begin{equation}
\label{con4}
[a,b^i]=b^{3\ell\phi(i)+ 9\ell^2 f(i)-27g(i)} c^i,\quad i\geq 2.
\end{equation}
Further setting $f(0)=0=g(0)$ and $f(1)=0=g(1)$ we readily verify that
(\ref{con4}) holds for all $i\geq 0$, where
$\phi(i)=\binom{i}{2}$, $f(i)=2\binom{i}{3}+\binom{i}{2}$ and $g(i)=3\binom{i}{4}+2\binom{i}{2}$ for all $i\geq 0$.
Let $s$ be any integer. Then
$$
\phi(i+54s)\equiv \phi(i)\mod 27,\quad i\in\Z.
$$
Set
$$
F(i)=(i-1)i(2(i-1)+1)/6=(i-1)i(2i-1)/6,\quad i\in\Z,
$$
so that $F(i)=f(i)$ for $i\geq 0$ and $F(i)\equiv F(i+54s)\mod 9$ for $i\in\Z$.

For any integer $i$, we have $\phi(i)\equiv 0\mod 3$ if $i\equiv 0\mod 3$ or $i\equiv 1\mod 3$, and $\phi(i)\equiv 1\mod 3$
if $i\equiv 2\mod 3$. Thus, for $i\geq 2$, $g(i)$ is congruent modulo 3 to the product of 2 multiplied by the amount of
integers between 2 and $i-1$, including the endpoints, that are congruent to 2 modulo~3. This amount is one more than 
the integral part of $(i-3)/3$.
Thus, for $i\geq 2$, $g(i)$ is congruent to $-\{1+[(i-3)/3]\}=-[i/3]$ modulo 3. Set
$$
G(i)=[i/3],\quad i\in\Z,
$$
so  that $G(i)\equiv -g(i)\mod 3$ for $i\geq 0$ and $G(i)\equiv G(i+54s)\mod 3$ for $i\in\Z$. Thus,
$$
[a,b^i]=b^{3\ell\phi(i)+ 9\ell^2 F(i)+27G(i)} c^i,\quad i\geq 0.
$$
If $i<0$, select $s\in\Z$ so that $i+162s\geq 0$. Then
$$
[a,b^i]=[a,b^{i+162s}]=b^{3\ell\phi(i+162s)+ 9\ell^2 F(i+162s)+27G(i+162s)} c^{i+162 s}=
b^{3\ell\phi(i)+ 9\ell^2 F(i)+27G(i)} c^i.
$$
Therefore, the following commutator formula is valid for all integers $i$:
\begin{equation}
\label{con44}
[a,b^i]=b^{3\ell\phi(i)+ 9\ell^2 F(i)+27G(i)} c^i.
\end{equation}
Applying the automorphism $a\leftrightarrow b$ of $J$, we deduce 
\begin{equation}
\label{con55}
[b,a^i]=a^{3\ell\phi(i)+ 9\ell^2 F(i)+27G(i)} c^{-i},\quad i\in\Z.
\end{equation}
We next compute (\ref{exp4}). From (\ref{con44}), we derive
$[a,b^3]=b^{9\ell+45\ell^2+27}c^{3}$.
Using $\ell=-1+3g$, we obtain
\begin{equation}
\label{conq}
[a,b^3]=b^{63}c^{3}.
\end{equation}
%which implies
%$$
%[b,a^3]=a^{63}c^{-3}.
%$$
Hence by (\ref{commu}),
$
[a^2,b^3]=[a,b^3]^a[a,b^3]=(b^{36}c^{3}b^{27})^a b^{63}c^{3},
$
where $b^{27}\in Z(J)$. Thus,
$$
[a^2,b^3]=(b^{36})^a (c^3)^a b^9 c^3.
$$
Here by (\ref{eq.c31}) and (\ref{con44}),
$$
(b^{9})^a=b^{9}[b^{9},a]=b^{9}[a,b^{9}]^{-1}=b^{-18} c^{-9},\; (b^{36})^a=b^{9} c^{-9},\;
(c^3)^a=c^{3}[c^3,a]=c^3 a^{-9\ell}.
$$
Therefore
$$
[b^2,a^3]=b^{9} c^{-9} c^3 a^{-9\ell} b^{9}c^{3}=a^{-9\ell}b^{18}c^{-3},
$$
as all factors belong to the abelian group $Z_4(J)$.  Thus,
$$
[a^3,b^3]=[a^2,b^3]^a[a,b^3]=(a^{-9\ell}b^{18} c^{-3})^a b^{63}c^{3}=b^{27}c^9.
$$
Hence
\begin{equation}
\label{333}
[a^3,b^3]=b^{27}c^9=a^{-27}c^9.
\end{equation}
Since $[a^3,b^3]$ commutes with both $a^3$ and $b^3$, it follows from (\ref{commu}) and (\ref{333}) that
\begin{equation}
\label{666}
[a^{3i},b^{3j}]=a^{-27ij}c^{9ij}=b^{27ij}c^{9ij},\quad i,j\in\Z.
\end{equation}
This takes care of (\ref{exp4}). Note, in particular, that
\begin{equation}
\label{999}
[a^9,b^3]=1=[a^3,b^9].
\end{equation}

We next compute  (\ref{exp7}) when $e=1=f$, that is $[a,b]^{b^{3j}a^{3i}}=c^{{b^{3j}a^{3i}}}$.
The relation ${}^c b=b^{\alpha_0}$ gives
$$
[a,b]^{b^{3j}}=b^{-3j}c b^{3j} c^{-1} c=b^{-3j} b^{3j(1+3\ell)} c=b^{9j\ell}c.
$$
By (\ref{999}), $b^9$ commutes with $a^3$, so
$
[a,b]^{b^{3j}a^{3i}}=b^{9j\ell }c^{a^{3i}}.
$
Here the relation $a^c=a^{\alpha_0}$ yields
$$
c^{a^{3i}}=cc^{-1} a^{-3i}c a^{3i}=c a^{-3i(1+3\ell)} a^{3i}=c a^{-9i\ell}.
$$
The inverse of $1+3\ell$ modulo 9 is $1-3\ell$ and $\ell^2\equiv 1\mod 3$, so
$$
ca^{-9i\ell}=c a^{-9i\ell}c^{-1} c=a^{-9i\ell(1-3\ell)}c=a^{-9i\ell-27i}c.
$$
As $b^9$ commutes with $a^9$, we infer
\begin{equation}
\label{prie}
[a,b]^{b^{3j}a^{3i}}=a^{-9i\ell-27i}b^{9j\ell}c.
\end{equation}

We next compute (\ref{exp5}) when $e=1$, that is, $[a,b^{3i}]^{a^{3j}}$. From (\ref{con44}), we deduce
$$
[a,b^{3i}]=b^{3\ell\phi(3i)+ 9\ell^2 F(3i)+27G(3i)} c^{3i}=b^{9i\ell[1+ \ell(6i-1)](3i-1)/2+27i} c^{3i}.
$$
Using $\ell=-1+3g$, we see that
$
i\ell[1+ \ell(6i-1)](3i-1)/2\equiv i(3i+1)\mod 9,
$
so
$$
[a,b^{3i}]=b^{9i+27i(i+1)} c^{3i},\quad i\in\Z.
$$
Since $a^{3}$ commutes with $b^9$, 
$
[a,b^{3i}]^{a^{3j}}=b^{9i+27i(i+1)} (c^{3i})^{a^{3j}}.
$
Here by (\ref{eq.c31}),
$$
(c^{3i})^{a^{3j}}=c^{3i}[c^{3i},a^{3j}]=c^{3i}a^{27ij}.
$$
We thus obtain
\begin{equation}
\label{tante1}
[a,b^{3i}]^{a^{3j}}=b^{9i+27i(i-j+1)} c^{3i},\quad i,j\in\Z.
\end{equation}
This takes care of (\ref{exp5}) when $e=1$. We proceed to compute (\ref{exp5}) when $e=-1$, that is, $[a^{-1}, b^{3i}]^{a^{3j}}$. 
By (\ref{tante1}),
$
a^{b^{3i}}=a b^{9i+27i(i+1)} c^{3i}.
$
Since $b^9$ and $c^3$ commute, we infer
$$
(a^{-1})^{b^{3i}}=b^{-9i-27i(i+1)} c^{-3i} a^{-1}.
$$
Here by (\ref{eq.c31}),
$
c^{-3i} a^{-1}=a^{-1}c^{-3i}[c^{-3i}, a^{-1}]=a^{-1}c^{-3i}a^{-9\ell i},
$
so
$
(a^{-1})^{b^{3i}}=b^{-9i-27i(i+1)} a^{-1} c^{-3i} a^{-9\ell i},
$
that is
$
(a^{-1})^{b^{3i}}=a^{-9\ell i+27i(i+1)}b^{-9i}a^{-1} c^{-3i}.
$
We next determine $[a^{-1},b^{9}]$. From (\ref{con44}), we obtain
$
[a,b^9]=c^9,
$
so (\ref{eq.c31}) gives
$
[a,b^9]^{a^{-1}}=(c^9)^{a^{-1}}=c^9[c^9,{a^{-1}}]=c^9a^{-27}=a^{-27}c^9.
$
Thus by (\ref{commu}),
$
1=[aa^{-1},b^9]=[a,b^9]^{a^{-1}}[a^{-1},b^9],
$
which yields
$
[a^{-1},b^9]=([a,b^9]^{a^{-1}})^{-1}=a^{27}c^{-9}.
$
Since  $a^{27}c^{-9}$ commutes with $b^9$, we deduce
$
[a^{-1},b^{9i}]=a^{27i}c^{-9i},\; [b^{9i},a^{-1}]=a^{-27i}c^{9i},\; [b^{-9i},a^{-1}]=a^{27i}c^{-9i}.
$
Therefore
$
(a^{-1})^{b^{3i}}=a^{-9\ell i+27i(i+1)}a^{-1}b^{-9i}a^{27i}c^{-9i} c^{-3i}=
a^{-1-9\ell i+27i(i+2)}b^{-9i}c^{-12i},
$
which implies
\begin{equation}
\label{duro}
[a^{-1}, b^{3i}]=a^{-9\ell i+27i(i+2)}b^{-9i}c^{-12i},\quad i\in\Z.
\end{equation}
Since $a^3$ commutes with $b^9$, we infer
$$
[a^{-1}, b^{3i}]^{a^{3j}}=
a^{-9\ell i+27i(i+2)}b^{-9i}(c^{-12i})^{a^{3j}}=a^{-9\ell i+27i(i+2)}b^{-9i}c^{-12i}[c^{-12i},a^{3j}].
$$
Here (\ref{eq.c31}) gives
$
[c^{-12i},a^{3j}]=a^{-27ij}.
$
We thus obtain
\begin{equation}
\label{duro2}
[a^{-1}, b^{3i}]^{a^{3j}}=a^{-9\ell i+27i(i-j+2)}b^{-9i}c^{-12i}.
\end{equation}
This yields (\ref{exp5}) when $e=-1$ and completes the determination of (\ref{exp5}).
Now  (\ref{exp6}) follows immediately by applying the automorphism $a\leftrightarrow b$ of $J$
to (\ref{tante1}) and (\ref{duro2}), and then inverting, which presents no difficulty as all factors commute.
We have
\begin{equation}
\label{tante2}
[a^{3i},b]^{b^{3j}}=a^{-9i-27i(i-j+1)} c^{3i},\quad i,j\in\Z,
\end{equation}
\begin{equation}
\label{duro3}
[a^{3i},b^{-1}]^{b^{3j}}=a^{9i}b^{9\ell i-27i(i-j+2)}c^{-12i}.
\end{equation}

%In particular, 
%\begin{equation}
%\label{con6}
%[a,b^{-1}]=b^{3\ell -9\ell^2-27}c^{-1},
%\end{equation}
%which implies
%\begin{equation}
%\label{con7}
%[b,a^{-1}]=a^{3\ell -9\ell^2-27}c.
%\end{equation}

We next calculate (\ref{exp7}) when $e=-1=-f$. We first compute $[a^{-1},b^{-1}]$. From (\ref{con44}), we have
$$
a^{b^i}=a b^{3\ell\phi(i)+ 9\ell^2 F(i)+27G(i)} c^i,\quad i\in \Z,
$$
whence
$$
(a^{-1})^{b^i}=c^{-i} b^{-3\ell\phi(i)- 9\ell^2 F(i)-27G(i)} a^{-1},\quad i\in \Z.
$$
Taking $i=-1$ yields
$$
(a^{-1})^{b^{-1}}=c b^{-3\ell+9\ell^2+27} a^{-1},
$$
$$
[a^{-1},b^{-1}]=a(a^{-1})^{b^{-1}}=ac b^{-3\ell+9\ell^2+27} a^{-1}.
$$
As ${}^c b=b^{1+3\ell}$ and $\ell^3\equiv -1\mod 3$, we infer
$$
[a^{-1},b^{-1}]=ab^{(-3\ell+9\ell^2+27)(1+3\ell)} c a^{-1}=ab^{-3\ell} c a^{-1}.
$$
Now $a^c=a^{1+3\ell}$ and $\ell\equiv -1\mod 3$, so the inverse of $1+3\ell$ modulo 81 is $1-3\ell+9\ell^2+27$, whence 
${}^c (a^{-1})=a^{-1+3\ell-9\ell^2-27}$. Thus
by (\ref{999}),
$$
[a^{-1},b^{-1}]=ab^{-3\ell} a^{-1+3\ell-9\ell^2-27} c=a^{1-9\ell^2-27} b^{-3\ell} a^{3\ell}a^{-1} c.
$$
Here by (\ref{333}),
$$
[b^{-3\ell}, a^{3\ell}]=[a^{3\ell},b^{-3\ell}]^{-1}=[a^{3\ell},b^{3\ell}]=a^{-27\ell^2}c^{9\ell^2}=
a^{-27}c^{9},
$$
so
$$
[a^{-1},b^{-1}]=a^{1-9\ell^2-27} a^{3\ell}b^{-3\ell}[b^{-3\ell}, a^{3\ell}] a^{-1} c=
a^{1+3\ell-9\ell^2-54}b^{-3\ell}c^9 a^{-1} c.
$$
As $c^9=c^{-18}$ and $\ell\equiv -1\mod 3$, we have $(1+3\ell)^{18}\equiv 1+27\mod 81$, so
$$
[a^{-1},b^{-1}]=
a^{1+3\ell-9\ell^2-54}b^{-3\ell}a^{-27} a^{-1} c^{10}=a^{1+3\ell-9\ell^2}b^{-3\ell}a^{-1} c^{10}.
$$
Here by (\ref{duro}), $\ell\equiv -1\mod 3$, and the fact that $Z_4(J)$ is abelian, we deduce
$$
[b^{-3\ell}, a^{-1}]=[a^{-1},b^{-3\ell}]^{-1}=[a^{-1},b^{3\ell}]=
a^{-9\ell^2+27\ell(\ell+1)}b^{-9\ell}c^{-12\ell}=a^{-9\ell^2}b^{-9\ell}c^{-12\ell},
$$
so by (\ref{999}) we finally obtain
\begin{equation}
\label{duro4}
[a^{-1},b^{-1}]=a^{3\ell-18\ell^2}b^{-12\ell}c^{10-12\ell}.
\end{equation}

We can now calculate (\ref{exp7}) when $e=-1=-f$. As $[a^9,c^9]=[b^3,c^9]=[a^9,b^3]=1$,
(\ref{duro4}) yields
$$
[a^{-1},b^{-1}]^{b^{3j}}=a^{3\ell-18\ell^2}[a^{3\ell},b^{3j}]b^{-12\ell}c^{10-12\ell}[c^{1-3\ell}, b^{3j}].
$$
Using (\ref{eq.c32}), (\ref{666}), $\ell\equiv -1\mod 3$, and ${}^c b=b^{1+3\ell}$, we obtain
$$
[a^{-1},b^{-1}]^{b^{3j}}=a^{3\ell-18\ell^2}b^{-3\ell-54j}
c^{1-3\ell +9(1+j-\ell)}.
$$
Thus 
$$
[a^{-1},b^{-1}]^{b^{3j}a^{3i}}=a^{3\ell-18\ell^2}b^{-3\ell-54j}[b^{-3\ell-54j},a^{3\ell-18\ell^2}] 
c^{1-3\ell +9(1-j-\ell)}[c^{1-3\ell},a^{3i}].
$$
Using (\ref{futres}), (\ref{eq.c31}), (\ref{666}), $\ell\equiv -1\mod 3$, and ${}^c a=a^{1-3\ell+9\ell^2+27}$, we obtain
\begin{equation}
\label{duro5}
[a^{-1},b^{-1}]^{b^{3j}a^{3i}}=a^{3\ell-9\ell i-18\ell^2-27j}b^{-3\ell}
c^{1-3\ell +9(1+i+j-\ell)}.
\end{equation}

We next compute  (\ref{exp7}) when $(e,f)=(1,-1)$ and $(e,f)=(-1,1)$, that is,
$[a,b^{-1}]^{b^{3j}a^{3i}}$ and $[a^{-1},b]^{b^{3j}a^{3i}}$. This is not needed to study $N_3$, it just completes (\ref{goal}),
so we merely state the results:
$$
[a,b^{-1}]^{b^{3j}a^{3i}}=a^{9\ell i+27(-i-j+1)}b^{3\ell-9\ell(\ell+j)}c^{-1+9i},\; 
[a^{-1},b]^{b^{3j}a^{3i}}=a^{-3\ell+9i\ell} b^{-9 j\ell} c^{-1+9j}.
$$

To study $N_3$ we require explicit formulas for (\ref{goal}) when $e=1=f$ and $e=-1=f$. More precisely, we need
$$
[a^{1+3i},b^{1+3j}][a,b]^{-1},\;  [a^{-1+3i},b^{-1+3j}][a,b]^{-1},
$$
where $i,j\in\Z$ are subject to certain conditions. We will compute both cases in general, beginning with 
$[a^{1+3i},b^{1+3j}][a,b]^{-1}$. 

We apply (\ref{delta5}), (\ref{666}), (\ref{prie}), (\ref{tante1}) subject to $i\leftrightarrow j$, 
and (\ref{tante2}). All resulting factors commute, except that we need to use
$$
c a^{-9i}=c a^{-9i} c^{-1}c=a^{-9i(1-3\ell)} c=a^{-9i-27i}c.
$$
Gathering like terms and making use of $\ell=-1+3g$ and (\ref{futres}), we obtain
\begin{equation}
\label{n3uno}
[a^{1+3i},b^{1+3j}][a,b]^{-1}=a^{-27(j-ij+i^2+j^2+(i+j)g)}c^{3(i+j)+9ij}.
\end{equation}

We next compute $[a^{-1+3i},b^{-1+3j}][a,b]^{-1}$. We apply (\ref{delta5}), (\ref{666}), (\ref{duro2})
subject to $i\leftrightarrow j$, (\ref{duro3}), 
and (\ref{duro5}). All  resulting factors commute, except that we need to use
$$
c^{-3j}a^{3\ell}=a^{3\ell}c^{-3j}[c^{-3j},a^{3\ell}]=a^{3\ell}c^{-3j}a^{27j},
$$
$$
c^{-3j}b^{-3\ell}=b^{-3\ell}c^{-3j}[c^{-3j},b^{-3\ell}]=b^{-3\ell}c^{-3j}b^{27j},
$$
$$
{}^c (b^{9\ell i})=b^{9\ell i(1+3\ell)}=b^{9\ell i+27i},\; {}^c (a^{9i})=a^{9i(1-3\ell)}a^{9 i+27i},
$$
which produces the central term $a^{27j}b^{27j}b^{27i}a^{27i}=1$ when writing (\ref{delta5}) in normal form. We obtain
\begin{equation}
\label{n3dos}
[a^{-1+3i},b^{-1+3j}][a,b]^{-1}=a^{3\ell-9[\ell(i+j)-i+2\ell^2]+27(i^2+j^2+2i+j)}
b^{-3\ell+9(\ell i-j)}c^{-3(i+j+\ell)+9(1-ij-\ell)}.
\end{equation}

\section{Case 3 when $\alpha=\beta$}\label{s7}

We keep the notation and hypotheses from Sections \ref{s2}, \ref{s3}, and \ref{s6}. 
Recall that $\alpha_0=\alpha^\gamma=1+3\ell$, which implies $\alpha\equiv 1\mod 3$ or $\alpha\equiv -1\mod 3$.
Here $\ell=-1+3g$, with $g\in\Z$, so $\alpha_0\equiv 7\mod 9$.

If $\alpha\equiv 1\mod 3$ then $\alpha=1+3u$, in which case $\alpha^\gamma\equiv 7\mod 9$ means $u\gamma\equiv 2\mod 3$.
In particular, $3\nmid\gamma$. But $3|(\alpha-1)$. Therefore, by Theorem \ref{tres}, $W_3\cong G_3$,  with $G_3$
described in detail in \cite{MS}. We will ignore this case in what follows and assume below that $\alpha=-1+3u$, in which
case $\alpha^\gamma\equiv 7\mod 9$ means $\gamma\equiv 0\mod 2$ and $u\gamma\equiv 1\mod 3$.
In particular, $3\nmid\gamma$ and  $3\nmid u$.

%\noindent{\sc Question.} Is $W_3$ independent of $\alpha$ and $\gamma$? 

Note that as $\alpha=-1+3u$ and $\alpha\mu\equiv 1\mod 81$, we have $$\mu\equiv -1-3u-9u^2-27u^3\mod 81,$$ and we
set $v=-(u+3u^2+9u^3)$, so that $\mu\equiv -1+3v\mod 81$. Thus
$$
\alpha^e\equiv (-1)^e+(-1)^{e-1} 3 eu+(-1)^{e-2} 9\phi(e) u^2\mod 27,\quad e\geq 2
$$
$$
\mu^e\equiv (-1)^e+(-1)^{e-1} 3 ev+(-1)^{e-2} 9 \phi(e) v^2\mod 27, \quad e\geq 2.
$$
Recalling that $a^{27},b^{27}\in Z(J)$, it follows that for all $e\geq 2$, we have
\begin{equation}
\label{formulavi}
V_e=[a^{\alpha^e},b^{\mu^e}][a,b]^{-1}=[a^{(-1)^e+(-1)^{e-1} 3 eu+(-1)^{e-2}9\phi (e) u^2},
b^{(-1)^e+(-1)^{e-1} 3 ev+(-1)^{e-2}9\phi(e) v^2}][a,b]^{-1}.
\end{equation}
In particular,
$
V_1=[a^{-1+3u},
b^{-1+3v}][a,b]^{-1}
$ 
and
$
V_2=[a^{1-6u+9u^2},b^{1-6v+9v^2}][a,b]^{-1}.
$
First take $e$ even in (\ref{formulavi}) and apply (\ref{n3uno})  with $i=-eu+3\phi(e)u^2$
and $j=-ev+3\phi(e)v^2$,  and $v=-(u+3u^2+9u^3)$. It follows from (\ref{n3uno})
that 
$
V_e=a^{-27eu}.
$
Thus all $V_e$, $e$ even, are central. Since $3\nmid u$, taking $e=2$ we get $V_2=a^{27u}$,
so $Z(J)\subseteq N_3$. 

Next take $e$ odd in (\ref{formulavi}) and apply (\ref{n3dos})  with $i=-eu+3\phi(e)u^2$,
$j=-ev+3\phi(e)v^2$,  and $v=-(u+3u^2+9u^3)$. It follows from (\ref{n3dos})
that 
$
V_e=a^{3\ell} b^{-3\ell}c^{-3\ell}a^{9f_1}b^{9f_2}c^{9f_3},
$
for some integers $f_1,f_2,f_3$. Thus
$
V_e=a^{3x} b^{3y}c^{3z},
$
for some integers $x,y,z$ none of which is a multiple of 3 and such that $x+y\equiv 0\mod 3$.

\begin{prop}\label{poop} Let $e\in\Z$ be odd. Then the normal closure of $V_e$ in $J$ is the abelian
subgroup
$$
T=\langle a^3b^{-3}, a^9, b^9,c^3\rangle=\langle a^3b^{-3}, c^3, a^9b^9\rangle=
\langle a^3b^{-3}\rangle\times \langle c^3\rangle\times \langle a^9b^9\rangle
\cong C_{27}\times C_9\times C_3. 
$$
Moreover, $T=N_3$ contains $Z_4(J)$ and is contained in $Z_5(J)$.  
\end{prop}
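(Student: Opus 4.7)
The plan is to prove the six claims bundled in the proposition — the two presentations of $T$, its internal direct product decomposition, the isomorphism $T\cong C_{27}\times C_9\times C_3$, the sandwich $Z_4(J)\subseteq T\subseteq Z_5(J)$, and the identification $T=N_3$ — in three stages, of which only the last is delicate.

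First I would handle the structural facts independent of $V_e$. Since $Z_3(J)=\langle a^9,b^9,c^9\rangle$ is abelian and $c^3\in Z_4(J)$ centralises it, the only nontrivial checks for $T$ being abelian are $[a^3b^{-3},a^9]=[a^3b^{-3},b^9]=[a^3b^{-3},c^3]=1$: the first two reduce via (\ref{999}) to $[a^9,b^3]=1$ and its analogue, and the third unfolds through (\ref{eq.c31})--(\ref{eq.c32}) into matching $a^{\pm 27}$-contributions that cancel by (\ref{futres}). The inclusions $Z_4(J)=\langle a^9,b^9,c^3\rangle\subseteq T\subseteq \langle a^3,b^3,c^3\rangle=Z_5(J)$ are immediate from the first presentation, and normality of $T$ follows at once from $T/Z_4(J)\subseteq Z_5(J)/Z_4(J)=Z(J/Z_4(J))$.

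For the orders and direct-product form, the commutator $[a^3,b^3]=a^{-27}c^9$ from (\ref{333}) lies in $Z_2(J)$ and is centralised by $a^3$ and $b^3$ (using (\ref{cab}) and centrality of $a^{27}$), so the class-$2$ expansion yields $(a^3b^{-3})^n=a^{3n-27\binom{n}{2}}b^{-3n}c^{9\binom{n}{2}}$; evaluating at $n=9$ and $n=27$ exhibits $a^3b^{-3}$ of order exactly $27$ with cube $a^9b^{-9}$. The orders of $c^3$ and $a^9b^9$ are $9$ and $3$, respectively, the latter using (\ref{futres}) together with $a^9\notin Z(J)$. The two presentations of $T$ coincide because $(a^3b^{-3})^3\cdot(a^9b^9)=a^{18}$ contains $a^9$ and hence also $b^9$, while trivial pairwise intersections of the three cyclic factors follow from the uniqueness of the normal form for $J$ in \cite[Theorem 7.1]{MS}, giving $|T|=3^6$ and the stated isomorphism.

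The delicate stage is $T=N_3$. From the Section \ref{s6} computation, $V_e\equiv a^{3\ell}b^{-3\ell}c^{-3\ell}\bmod Z_3(J)$ has image $\ell(1,-1)$ in $Z_5(J)/Z_4(J)\cong C_3\times C_3$ with basis $\overline{a^3},\overline{b^3}$; since $\ell\equiv -1\bmod 3$ this generates the diagonal subgroup $\langle\overline{a^3b^{-3}}\rangle$. Thus $V_e\in T$, the normal closure $K$ of $V_e$ lies in $T$, and $V_eZ_4(J)$ generates $T/Z_4(J)$, so $KZ_4(J)=T$ and the equality $K=T$ reduces to showing $Z_4(J)\subseteq K$. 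I would establish the latter by a ladder of commutators with $a$ and $b$: direct computation from (\ref{eq.c31})--(\ref{eq.c32}) gives $[V_e,a]\equiv[V_e,b]\equiv c^{-3}\bmod Z_3(J)$, placing $c^3$ modulo $Z_3(J)$ in $K$; iterating yields $[V_e,a,a]\equiv a^{-9}$ and $[V_e,b,b]\equiv b^{-9}$ modulo $Z_2(J)$, and a triple bracket such as $[V_e,a,a,b]$ evaluates to $a^{27}c^{-9}$, which combined with suitable powers of the previous brackets supplies the missing central factor. The main obstacle is precisely this bookkeeping: the characteristic-$3$ correction terms in (\ref{eq.c31})--(\ref{eq.c32}) and in Section \ref{s6} must be tracked carefully enough to verify that the iterated commutators exhaust each layer of the filtration $Z(J)\subseteq Z_2(J)\subseteq Z_3(J)\subseteq Z_4(J)$ rather than getting trapped in a proper sub-$J$-module.
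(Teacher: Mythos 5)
Your proposal is correct and follows essentially the same route as the paper: first establish that $T$ is abelian, normal, and sandwiched between $Z_4(J)$ and $Z_5(J)$ with the stated order and direct-product structure, then show that the normal closure of $V_e$ fills out all of $T$ by iterating commutators with $a$ and $b$ down through the layers of the upper central series (the paper takes one further commutator $[V_e,a,a,b,a]$ to isolate $Z(J)$, whereas you combine the quadruple bracket with earlier ones, but both work). The only point left implicit is that $N_3\subseteq T$ also requires the even-indexed generators $V_f$ to lie in $T$; these were shown to be central, hence in $Z(J)\subseteq Z_4(J)\subseteq T$, in the computation immediately preceding the proposition, so nothing essential is missing.
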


\begin{proof} It is clear that $Z_4(J)\subseteq T\subseteq Z_5(J)$. As $Z_5(J)/Z_4(J)$ is a free module of rank
2 over $\Z/\Z_3$, it follows from (\ref{futres}) that $|T|=3^6$. In any group $R$, a group lying between consecutive terms
of the upper central series of $R$ is automatically normal in $R$. Thus $T$ is normal in $J$. 
%Now $Z_4(J)$ is normal in $J$ and $a^3b^{-3}$,
%so necessarily
%$$
%(a^3b^{-3})^a\equiv a^3b^{-3}\mod Z_4(J),\; (a^3b^{-3})^b\equiv a^3b^{-3}\mod Z_4(J).
%$$
%Thus $T$ is normal in $J$. 
We already know that $a^9$ and $b^9$ commute with each other and
as well as with $c^3$, $a^3$, and $b^3$, and hence with $a^3b^{-3}$. As $(1+3\ell)^3\equiv 1+9\ell\mod 27$,
with inverse $1-9\ell$ modulo 27, we see from (\ref{futres}) that
$$
(a^3b^{-3})^{c^3}=a^{3(1+9\ell)}b^{-3(1-9\ell)}=a^{3-27}b^{-3-27}=a^3b^{-3}.
$$
This shows that $T$ is abelian. Our prior analysis ensures that all $V_f$ are in $T$. As $T$
is a normal subgroup of $J$, we infer $N_3\subseteq T$. We proceed to show that
the normal closure of $V_e$, say $Q$, is $T$. As $V_e\in N_3$, it will then follow that $T\subseteq N_3$,
whence $T=N_3$. As indicated above, we have
$$
V_e=a^{3x} b^{3y}c^{3z},
$$
for some integers $x,y,z$ none of which is a multiple of 3 and such that $x+y\equiv 0\mod 3$. Here $V_e\in Q$.
As $T$ is normal in $J$, it follows that $Q\subseteq T$. Let us prove the reverse inclusion. We have
$$
V_e^a=a^{3x} b^{3y} [b^{3y},a] c^{3z} [c^{3z},a],
$$
where $[b^{3y},a]=[a,b^{3y}]^{-1}=b^{-9\ell y(1-\ell)}c^{-3y}$ by (\ref{con44}) and $[c^{3z},a]=a^{-9\ell z}d$
for some $d\in Z(J)$ by~(\ref{eq.c31}). Thus 
$$[V_e,a]=V_e^{-1} V_e^a=a^{-9\ell z}b^{-9\ell y(1-\ell)}c^{-3y}d=a^{9f_1}b^{9f_2}c^{3f_3}d\in Q,
$$
where none of $f_1,f_2,f_3$ are multiples of 3. Likewise, (\ref{eq.c31}), (\ref{eq.c32}), (\ref{con44}), and (\ref{con55}) imply that
$$
[V_e,a,a]=a^{9f_4}c^{9f_5}d_2\in Q, [V_e,a,b]=b^{9f_5}c^{9f_6}d_3\in Q,
$$
where $d_2,d_3\in Z(J)$ and none of $f_4,f_5,f_6,f_7$ are multiples of 3. Now
(\ref{eq.c32}) and (\ref{con55}) yield that
$$
[V_e,a,a,b]=c^{9f_8}d_3\in Q,
$$
where $d_4\in Z(J)$ and $3\nmid f_8$. We finally deduce from (\ref{eq.c31}) that
$$
[V_e,a,a,b,a]=a^{27 f_9}\in Q,
$$
where $3\nmid f_9$. We successively deduce that the following elements belong to $Q$:
$$
a^{27}, c^{9}, a^{9}, b^9, c^3, a^3b^{-3}.
$$
This proves that $T\subseteq Q$. From (\ref{333}) we get $(a^3 b^{-3})^3=a^9 b^{-9} a^{-54} c^{18}$.
Thus $o(a^3 b^{-3})=27$, $(a^3 b^{-3})^3 a^9 b^9 (c^3)^{-6}=a^{-36}$, and
$T=\langle a^3b^{-3}, c^3, a^9b^9\rangle$. It follows from \cite[Theorem 7.1]{MS} that 
$T=\langle a^3b^{-3}\rangle\times \langle c^3\rangle\times \langle a^9b^9\rangle\cong C_{27}\times C_9\times C_3$.
\end{proof}

\begin{theorem}\label{poop2} The group
$
N_3=\langle a^3b^{-3}, a^9, b^9,c^3\rangle,
$
is group of order $3^6$. Moreover, $W_3\cong G_3/N_3$ is the sole Sylow 3-subgroup of $W$, and is
the group of order 81 generated by elements $a_0,b_0,c_0$ subject to the defining
relations:
$$
a_0^{c_0}=a_0^{-2}, b_0^{c_0}=b_0^{4}, a_0^9=b_0^9=c_0^3=1, a_0^3=b_0^3, [a_0,b_0]=c_0.
$$
\end{theorem}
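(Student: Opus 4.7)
The first assertion, that $|N_3|=3^6$, is simply a restatement of Proposition \ref{poop}. Since the opening of this section establishes $3\nmid\gamma$ in the relevant subcase (namely $\alpha\equiv -1\bmod 3$, which forces $\gamma$ even and $u\gamma\equiv 1\bmod 3$, hence $3\nmid\gamma$), Theorem \ref{dos} gives that $W_3\cong G_3/N_3$ is the sole Sylow 3-subgroup of $W$. Combined with $|G_3|=3^{10}$ from \cite{MS}, this yields $|W_3|=|G_3/N_3|=3^{10}/3^6=81$.

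To derive the presentation, I would start from the presentation of $J=G_3$ given as $\langle a,b\mid a^c=a^{\alpha_0},\,b^{[b,a]}=b^{\alpha_0},\,a^{81}=b^{81}=1\rangle$, with $c=[a,b]$ and $\alpha_0=\alpha^\gamma=1+3\ell$. Passing to $G_3/N_3$ and writing $a_0,b_0,c_0$ for the images of $a,b,c$, the listed generators of $N_3$ impose $a_0^9=b_0^9=c_0^3=1$ and $a_0^3=b_0^3$, rendering $a^{81}=b^{81}=1$ redundant. Since $\ell=-1+3g$ gives $\alpha_0\equiv -2\bmod 9$, the relation $a^c=a^{\alpha_0}$ becomes $a_0^{c_0}=a_0^{-2}$; and since the inverse of $-2$ modulo $9$ is $4$, the relation ${}^c b=b^{\alpha_0}$ (which gives $b_0^{c_0^{-1}}=b_0^{-2}$) becomes $b_0^{c_0}=b_0^4$. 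These are precisely the stated defining relations, together with $[a_0,b_0]=c_0$.

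It then remains to verify that no further relations are required, which is the main obstacle. Letting $H$ denote the group abstractly presented by the stated relations, the canonical surjection $H\twoheadrightarrow G_3/N_3$ shows $|H|\geq 81$, so I only need $|H|\leq 81$. The key step is showing that $\langle a_0^3\rangle$ is a central (hence normal) subgroup of $H$. Since $(a_0^3)^{c_0}=a_0^{-6}=a_0^3$ in $H$, it suffices to check $(a_0^3)^{b_0}=a_0^3$; using $a_0^{b_0}=a_0c_0$ (from $[a_0,b_0]=c_0$) and $a_0^{c_0^{-1}}=a_0^{4}$ (the inverse modulo $9$ of multiplication by $-2$), a direct two-step expansion yields $(a_0c_0)^2=a_0^5c_0^2$ and then $(a_0c_0)^3=a_0^5a_0^7c_0^3=a_0^{12}=a_0^3$. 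Once $\langle a_0^3\rangle$ is known to be central of order at most $3$, the quotient $H/\langle a_0^3\rangle$ satisfies $\bar{a}_0^3=\bar{b}_0^3=\bar{c}_0^3=1$ together with $\bar{a}_0^{\bar{c}_0}=\bar{a}_0^{-2}=\bar{a}_0$ and $\bar{b}_0^{\bar{c}_0}=\bar{b}_0^4=\bar{b}_0$, so it is the Heisenberg group over $\Z/3\Z$ of order $27$. Therefore $|H|\leq 3\cdot 27=81$, completing the proof.
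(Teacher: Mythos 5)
Your proposal is correct and follows essentially the same route as the paper, which simply cites Proposition \ref{poop} (for $N_3$ and its order) together with the known order $3^{10}$ and presentation of $G_3$, plus the observation $3\nmid\gamma$ feeding into Theorem \ref{dos}. The only difference is that you explicitly verify that the simplified relation set presents a group of order exactly $81$ (via centrality of $a_0^3$ and the Heisenberg quotient), a detail the paper leaves implicit in ``immediate consequence,'' and your computations there are accurate.
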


\begin{proof} This is an immediate consequence of Proposition \ref{poop} and our prior work on $G_3$.
\end{proof}

\section{The order of $W$}

Recall that $\alpha$ and $\gamma$ are integers such that $\gamma>0$ and $\alpha^\gamma\neq 1$.
In particular, if $\alpha=-1$, then $2\nmid\gamma$. Given a prime number $p$,
we set $m=v_p(\alpha^\gamma-1)$, $h=v_p(\alpha-1)$, $n=v_p(\gamma)$. Note if $p=2$, $m>0\Leftrightarrow h>0$.
When $mn>0$, we define $v(p,\alpha,\gamma)$
to be: $n$ if $n>0$ and $m=0$; $7m-3n$ if $\alpha\equiv 1\mod p$, $p$ is odd, and if $p=3$, then $\alpha^\gamma\not\equiv 7\mod 9$; 
$4m$ if $\alpha\equiv -1\mod p$, $p$ is odd, and if $p=3$, then $\alpha^\gamma\not\equiv 7\mod 9$;
$3m+n$ if $\alpha\not\equiv \pm 1\mod p$, $p$ is odd, and if $p=3$, then $\alpha^\gamma\not\equiv 7\mod 9$;
$7m-3$ if $m>0$, $n=0$, and $p=2$; $7m-3n-2$ if $h>1$, $n>0$, and $p=2$; $4m+1$ if $h=1$, $n>0$, $\alpha\neq -1$, and $p=2$; 
10 if $p=3$, $\alpha^\gamma\equiv 7\mod 9$, and $\alpha\equiv 1\mod 3$; 4 if $p=3$, $\alpha^\gamma\equiv 7\mod 9$, and $\alpha\equiv -1\mod 3$.

As a consequence of Theorems \ref{one}, \ref{tres}, \ref{main1}, \ref{hzero}, \ref{hzero2}, \ref{main2}, \ref{main3}, \ref{poop2}, and 
\cite[Theorem 7.1]{MS}, we have the following result.

\begin{theorem}\label{orderW} The order of $W$ is
$|W|=\underset{p\mid (\alpha^\gamma-1)\gamma}\Pi p^{v(p,\alpha,\gamma)}$.
\end{theorem}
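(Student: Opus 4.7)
The plan is to write $|W|$ as the product of the orders $|W_p|$ of its Sylow $p$-subgroups and then, for each prime $p$ dividing $(\alpha^\gamma-1)\gamma$, identify which branch of the piecewise definition of $v(p,\alpha,\gamma)$ applies and invoke the corresponding theorem from the preceding sections. By Theorem \ref{one} the group $W$ is finite, and the remark after Theorem \ref{one} (with $\beta=\alpha$) shows that its prime divisors lie in $\{p : p\mid (\alpha^\gamma-1)\gamma\}$. Therefore the equality to be proven reduces to matching $|W_p|$ with $p^{v(p,\alpha,\gamma)}$ for each such prime, one case at a time.

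First I would dispose of the trivial case $m=0$, $n>0$: then $p\nmid (\alpha^\gamma-1)$, so by the discussion in Section \ref{s2} a Sylow $p$-subgroup of $W$ is $\langle Z^k\rangle$, of order $p^n$, matching $v=n$. Next, for odd $p$ in Case 1, I would split on $h=v_p(\alpha-1)$: if $h>0$, Theorem \ref{main1} gives $|W_p|=p^{7m-3n}$; if $h=0$ and $\alpha\not\equiv -1\pmod p$, Theorem \ref{hzero} yields $p^{3m+n}$; if $\alpha\equiv -1\pmod p$, Theorem \ref{hzero2} yields $p^{4m}$. For $p=2$ and $n=0$, Theorem \ref{tres} forces $W_2\cong G_2$, whose order is $2^{7m-3}$ by \cite[Theorem 7.1]{MS}; for $h\geq 2$ and $n>0$, Theorem \ref{main2} gives $2^{7m-3n-2}$; for $h=1$, $n>0$, and $\alpha\neq -1$, Theorem \ref{main3} gives $2^{4m+1}$. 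Finally, in Case 3 ($p=3$ and $\alpha^\gamma\equiv 7\pmod 9$): when $\alpha\equiv 1\pmod 3$, Section \ref{s7} observes that $3\mid(\alpha-1)$ and $3\nmid\gamma$, so Theorem \ref{tres} gives $W_3\cong G_3$ of order $3^{10}$; when $\alpha\equiv -1\pmod 3$, Theorem \ref{poop2} gives $|W_3|=3^4$. In each case the exponent matches $v(p,\alpha,\gamma)$ by inspection.

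The only non-routine aspect is the bookkeeping: verifying that these branches exhaust every possible $(p,\alpha,\gamma)$ with $p\mid(\alpha^\gamma-1)\gamma$ and do so disjointly. In particular, the edge case $p=2$, $\alpha=-1$ forces $\gamma$ odd, hence $n=0$ and $m=1$, so the formula $2^{7m-3}=2^4$ matches $G_2\cong Q_{16}$; the case $p=2$, $h=0$ is impossible, as noted at the end of Section \ref{s5}; and the Case 3 configurations are compatible with $\alpha^\gamma\equiv 7\pmod 9$ precisely under the parity conditions on $u\gamma$ identified at the start of Section \ref{s7}. Once this enumeration is verified, multiplying the Sylow orders yields the claimed formula, and no additional mathematical input is required beyond the theorems already proved.
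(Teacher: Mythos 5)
Your proposal is correct and follows exactly the route the paper takes: the paper's "proof" of Theorem \ref{orderW} is a one-line assembly of Theorems \ref{one}, \ref{tres}, \ref{main1}, \ref{hzero}, \ref{hzero2}, \ref{main2}, \ref{main3}, \ref{poop2} and \cite[Theorem 7.1]{MS}, which is precisely the Sylow-by-Sylow bookkeeping you carry out. Your explicit verification that the branches of $v(p,\alpha,\gamma)$ are exhaustive and disjoint (including the edge cases $\alpha=-1$ for $p=2$ and the parity constraints in Case 3) is the only substance of the argument, and you have it right.
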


\section{The nilpotency class of $W_p$}

Applying Theorems \ref{tres}, \ref{main1}, \ref{hzero}, \ref{hzero2}, \ref{main2}, \ref{main3}, and \ref{poop2},
as well as the commutator formulas from Theorems \ref{thm.commutators.J} and \ref{fern}, we readily
obtain the following result.

\begin{theorem}\label{nilp} The nilpotency class $c$ of $W_p$ is as follows:

$\bullet$ Case 1, $h>n\geq 0$. Then $c=5$, with
$$\gamma_{2}=\langle a_0^{p^{h}}, b_0^{p^{h}}, c_0\rangle,\; 
\gamma_{3}=\langle a_0^{p^{m}}, b_0^{p^{m}}, c_0^{p^{h}}\rangle,\;
\gamma_{4}=\langle a_0^{p^{m+h}},c_0^{p^{m}}\rangle,\;
\gamma_{5}=\langle a_0^{p^{2m}}\rangle, \gamma_{6}=1.
$$

$\bullet$ Case 1, $0<h\leq n$. Then $\gamma_{i+1}=\langle a_0^{p^{i h}}, b_0^{p^{i h}}, c_0^{p^{(i-1)h}}\rangle$, $i\geq 1$,
so $c=\lceil m/h\rceil+2$.

$\bullet$ Case 1, $\alpha\not\equiv \pm 1\mod p$. For $s=v_p(\alpha^k-1)$, we have
$
1\equiv \alpha^\gamma=\alpha^{p^n k}\equiv \alpha^k\mod p,
$
whence $s>0$, $m=s+n$, $\gamma_{i+1}=\langle a_0^{p^{i s}}, b_0^{p^{i s}}, c_0^{p^{(i-1)s}}\rangle$, $i\geq 1$,
and $c=\lceil m/s\rceil+1$.

$\bullet$ Case 1, $\alpha\equiv -1\mod p$, $h_0>n\geq 0$. Then $c=3$, with
$$\gamma_{2}=\langle a_0^{p^{h_0}}, b_0^{p^{h_0}}, c_0\rangle,\; 
\gamma_{3}=\langle a_0^{p^{m}}, b_0^{p^{m}}, c_0^{p^{h_0}}\rangle,\;
\gamma_{4}=1.
$$

$\bullet$ Case 1, $\alpha\equiv -1\mod p$, $h_0\leq n$. Then 
$\gamma_{i+1}=\langle a_0^{p^{i h_0}}, b_0^{p^{i h_0}}, c_0^{p^{(i-1)h_0}}\rangle$, $i\geq 1$,
so $c=\lceil m/h_0\rceil+1$.

$\bullet$ Case 2, $h>n\geq 0$, $h>1$. Then, $c=5$, with
$$\gamma_{2}=\langle a_0^{2^{h}}, b_0^{2^{h}}, c_0\rangle,\; 
\gamma_{3}=\langle a_0^{2^{m}}, b_0^{2^{m}}, c_0^{2^{h}}\rangle,\;
\gamma_{4}=\langle a_0^{2^{m+h}},c_0^{2^{m}}\rangle,\;
\gamma_{5}=\langle a_0^{2^{2m}}\rangle, \gamma_{6}=1.
$$

$\bullet$ Case 2, $1<h\leq n$. Then $\gamma_{i+1}=\langle a_0^{2^{i h}}, b_0^{2^{i h}}, c_0^{2^{(i-1)h}}\rangle$, $i\geq 1$,
so $c=\lceil m/h\rceil+2$.

$\bullet$ Case 2, $h=1$, $t\neq -1$, and $n>0$. Then $\gamma_{i+1}=\langle a_0^{2^{i}}, b_0^{2^{i}}, c_0^{2^{(i-1) }}\rangle$, $i\geq 1$,
so $c=2m+1-n$.

$\bullet$ Case 2, $h=1$, $t\neq -1$, and $n=0$. Then $c=3$ by \cite[Proposition 8.2]{MS}.

$\bullet$ Case 2, $\alpha=-1$. Then $c=3$ by \cite[Proposition 8.2]{MS}.

$\bullet$ Case 3, $\alpha\equiv 1\mod 3$. Then $c=7$ by \cite[Theorem 8.1]{MS}.

$\bullet$ Case 3, $\alpha\equiv -1\mod 3$. Then $c=3$, with
$\gamma_{2}=\langle a_0^{3}, c_0\rangle,\; 
\gamma_{3}=\langle a_0^{3}\rangle,\;\gamma_{4}=1.
$
\end{theorem}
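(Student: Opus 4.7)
The plan is to compute the lower central series $\gamma_i(W_p)$ directly in each subcase, starting from the presentations of $W_p$ furnished by Theorems \ref{tres}, \ref{main1}, \ref{hzero}, \ref{hzero2}, \ref{main2}, \ref{main3}, and \ref{poop2}, and applying the commutator formulas of Theorems \ref{thm.commutators.J} and \ref{fern} (together with the formulas derived in Section \ref{s6} for Case 3). In each subcase the class $c$ is then read off as the smallest index with $\gamma_{c+1}=1$, using the explicit orders and normal forms already established for $a_0,b_0,c_0$.

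Two preliminary identities will be used throughout. First, $d_0$ centralizes $c_0$: iterating the relation $[a_0^\alpha,b_0^\mu]=c_0$ gives $[a_0^{\alpha^{qk}},b_0^{\mu^{qk}}]=c_0$, which is precisely $c_0^{d_0}$. Consequently $[W_p,W_p]\subseteq \langle a_0,b_0,c_0\rangle$ and every $\gamma_i$ with $i\geq 2$ sits inside $G_p/N_p$. Second, whenever $p\mid(\alpha-1)$, the lifting-the-exponent principle (applicable for odd $p$ and, because $qk$ is coprime to $p$ and hence odd when $p=2$, also for $p=2$) gives $v_p(\alpha^{qk}-1)=v_p(\alpha-1)=h$, so $[a_0,d_0]=a_0^{\alpha^{qk}-1}$ generates $\langle a_0^{p^h}\rangle$ and symmetrically for $b_0$. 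In the subcase $\alpha\equiv -1\pmod p$, the same principle together with the fact that $k$ (and hence $qk$) must be even yields $v_p(\alpha^{qk}-1)=h_0$; when $\alpha\not\equiv\pm 1\pmod p$, one uses $s=v_p(\alpha^k-1)>0$ in place of $h$.

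With these in hand, the induction is routine. In Case 1 with $h>0$ one finds $\gamma_2=\langle a_0^{p^h},b_0^{p^h},c_0\rangle$ immediately, and the essential new commutators producing $\gamma_3$ are $[a_0^{p^h},b_0]\equiv c_0^{p^h}$ modulo higher terms by (\ref{eq.comj.4}), $[a_0^{p^h},d_0]=a_0^{p^h(\alpha^{qk}-1)}\in\langle a_0^{p^{2h}}\rangle$, and $[c_0,a_0]\in\langle a_0^{p^m}\rangle$ by (\ref{eq.comj.2}). Comparing $2h$ with $m=h+n$ explains the split between the subcases $h>n$ (in which $2h>m$, the chain stabilizes quickly, and the class is $5$) and $h\leq n$ (in which each step gains a single factor of $p^h$ on $a_0$, $b_0$, $c_0$, yielding class $\lceil m/h\rceil+2$). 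The remaining odd-$p$ subcases follow the same template with $h$ replaced by $h_0$ or $s$.

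The main obstacle is bookkeeping across the many subcases rather than any single calculation. Case 2 requires extra care because $v_2(\phi(2^j))=j-1$ instead of $j$, which is what ultimately produces the shifted class formula $2m+1-n$ in the subcase $h=1$, $t\neq-1$, $n>0$. Case 3 with $\alpha\equiv 1\pmod 3$ reduces to the class of $G_3$ via Theorem \ref{tres} (as $3\nmid\gamma$), which is $7$ by \cite[Theorem 8.1]{MS}, while Case 3 with $\alpha\equiv -1\pmod 3$ is obtained by direct inspection of the explicit order-$81$ quotient given in Theorem \ref{poop2}. The two degenerate subcases $h=1$, $t\neq-1$, $n=0$ and $\alpha=-1$ reduce to $W_2\cong G_2$ by Theorem \ref{tres}, so their class is $3$ by \cite[Proposition 8.2]{MS}.
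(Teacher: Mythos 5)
Your proposal is correct and follows exactly the route the paper takes: the paper's own proof of Theorem \ref{nilp} consists precisely of invoking the presentations from Theorems \ref{tres}, \ref{main1}, \ref{hzero}, \ref{hzero2}, \ref{main2}, \ref{main3}, and \ref{poop2} together with the commutator formulas of Theorems \ref{thm.commutators.J} and \ref{fern} (and Section \ref{s6} for Case 3) and computing the lower central series term by term. Your preliminary observations (that $d_0$ fixes $c_0$, the valuation $v_p(\alpha^{qk}-1)$ computations, and the comparison of $2h$ with $m=h+n$ governing the case split) are exactly the bookkeeping the paper leaves implicit.
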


\section{Derived length of $W$}

Note that $[W,W]=\langle X^{\alpha-1}, Y^{\alpha-1}, [X,Y]\rangle$. Clearly, the right hand side is contained in $[W,W]$.
On the other hand, $\langle X^{\alpha-1}, Y^{\alpha-1}, [X,Y]\rangle$ is normal in $W$, as it is clearly normalized by $Z$,
and $\langle X^{\alpha-1}, Y^{\alpha-1}, [X,Y]\rangle$ is a normal subgroup of $\langle X,Y\rangle$, since
$\langle A^{\alpha-1}, B^{\alpha-1}, [A,B]\rangle$ is a normal subgroup of $G$, being the kernel of 
$G\to C_{|\alpha-1|}\times C_{|\alpha-1|}$ (to see that the kernel is contained in $\langle A^{\alpha-1}, B^{\alpha-1}, [A,B]\rangle$,
use that $G=\langle A\rangle \langle B\rangle \langle [A,B]\rangle$, as ensured by 
\cite[Lemma 6.1]{MS}). %The same argument shows that derived subgroup of the Wamsley group, even when $\alpha\neq\beta$,
%is $[W,W]=\langle X^{\alpha-1}, Y^{\beta-1}, [X,Y]\rangle$.

As $[W,W]$ is contained in the finite nilpotent group $\langle X,Y\rangle$, we see that 
$[W,W]$ is the direct product of all $[W,W]_p=\langle X,Y\rangle_p\cap [W,W]$,
as $p$ runs through the positive prime factors of $\alpha^\gamma-1$. Thus, the derived
length of $W$, say $\ell(W)$, satisfies $\ell(W)=1+\max\{\ell([W,W]_p):p\mid (\alpha^\gamma-1)\}$.

Fix a prime number $p$ dividing $\alpha^\gamma-1$.
Then $[W,W]_p$ is the image of the restriction to $[W,W]=\langle X^{\alpha-1}, Y^{\alpha-1}, [X,Y]\rangle$ 
of projection map $\langle X,Y\rangle\to \langle X,Y\rangle_p$.
Recalling that $h=v_p(\alpha-1)$, we have
$$
[W,W]_p=\langle x^{p^h}, y^{p^h}, [x,y]\rangle\cong \langle a_0^{p^h}, b_0^{p^h}, [a_0,b_0]\rangle,
$$
in the notation of Section \ref{s2}, where $\langle a_0,b_0\rangle=G_p/N_p$. 

If $h=0$ we have $[W,W]_p=G_p/N_p$ and $\ell([W,W]_p)=\ell(G_p/N_p)$.

If $h>0$, the presentation of $G_p/N_p$ given in Section \ref{s2} shows that
$$
\langle a_0^{p^h}, b_0^{p^h}, [a_0,b_0]\rangle=[G_p/N_p, G_p/N_p],
$$
since $p\nmid kq$ and $h=v_p(\alpha-1)>0$. Thus $\ell([W,W]_p)=\ell(G_p/N_p)-1$ in this case.

(All of the above works even when $\alpha\neq\beta$, in which case $[W,W]=\langle X^{\alpha-1}, Y^{\beta-1}, [X,Y]\rangle$.)

We can now easily compute $\ell(G_p/N_p)$ using the the commutator formulas from Sections \ref{s4}, 
\ref{s5}, and \ref{s6}, together with Theorems \ref{tres}, \ref{main1}, \ref{hzero}, \ref{hzero2}, \ref{main2}, \ref{main3}, and \ref{poop2}.

Suppose first $h>0$. In Cases 1 and 2, except when $p=2$, $h=1$, and $n=0$, we have
$$
\gamma^{(1)}(G_p/N_p)=\langle a_0^{p^{h}}, b_0^{p^{h}}, c_0\rangle,
\gamma^{(2)}(G_p/N_p)=\langle a_0^{p^{m+h}}, b_0^{p^{m+h}}, c_0^{p^{2h}}\rangle,
\gamma^{(3)}(G_p/N_p)=1.
$$
and $\ell(G_p/N_p)=3$. When $p=2$, $h=1$, and $n=0$, we have $\ell(G_2/N_2)=\ell(G_2)=\ell(Q_{16})=2$.
In Case 3, we have $\alpha\equiv 1\mod 3$ and $n=0$,
as explained in Section \ref{s7}, so $N_3=1$. The commutator formulas from Section \ref{s6} give $\ell(G_3)=3$.

Suppose next $h=0$. This can only occur in Cases 1 and 3.
In Case 3, we readily see that $\ell(G_3/N_3)=2$. In Case 1, if $\alpha\not\equiv -1\mod p$, then $G_p/N_p\cong\mathrm{Heis}(\Z/p^m\Z)$,
so $\ell(G_p/N_p)=2$. In Case 1, if $\alpha\equiv -1\mod p$, then the terms of the derived
series of $G_p/N_p$ are 
$$
\gamma^{(1)}(G_p/N_p)=\langle a_0^{p^{m}}, b_0^{p^{m}}, c_0\rangle,
\gamma^{(2)}(G_p/N_p)=1,
$$
so $\ell(G_p/N_p)=2$. We have shown the following result.

\begin{theorem}\label{solv} We have $\ell(W)=2$ if $\alpha=-1$ and $\gamma$ is odd, or $\alpha=3$ and $\gamma=1$; else
$\ell(W)=3$.
\end{theorem}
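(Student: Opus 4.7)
The plan is to combine the reduction $\ell(W) = 1 + \max\{\ell([W,W]_p) : p \mid (\alpha^\gamma-1)\}$ from the paragraph preceding the theorem with the case-by-case computations of $\ell(G_p/N_p)$ just carried out, and then perform a short number-theoretic enumeration. First I would extract the uniform conclusion that, for every prime $p \mid \alpha^\gamma - 1$,
\[
\ell([W,W]_p) =
\begin{cases}
1, & (p,h,n) = (2,1,0),\\
2, & \text{otherwise.}
\end{cases}
\]
Indeed, when $h = 0$ one has $\ell([W,W]_p) = \ell(G_p/N_p) = 2$ (by the listed subcases of Cases~1 and~3 with $h = 0$); when $h > 0$ and $(p,h,n) \neq (2,1,0)$, the preceding enumeration gives $\ell(G_p/N_p) = 3$ and so $\ell([W,W]_p) = 2$; and when $(p,h,n) = (2,1,0)$, the group $G_2/N_2 = G_2 \cong Q_{16}$ has derived length $2$, yielding $\ell([W,W]_2) = 1$. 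Hence $\ell(W) \in \{2,3\}$, and $\ell(W) = 2$ iff every prime $p$ dividing $\alpha^\gamma - 1$ satisfies $(p,h,n) = (2,1,0)$.

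The condition in the previous sentence amounts to three requirements on $(\alpha,\gamma)$: the only prime dividing $\alpha^\gamma - 1$ is $2$; $\alpha \equiv 3 \pmod 4$; and $\gamma$ is odd. To finish, I would factor $\alpha^\gamma - 1 = (\alpha - 1) T$ with $T = 1 + \alpha + \cdots + \alpha^{\gamma-1}$. Since $\alpha$ and $\gamma$ are both odd, $T$ is a sum of an odd number of odd integers, so $T$ is odd and $v_2(\alpha^\gamma - 1) = v_2(\alpha-1) = 1$. The requirement that $\alpha^\gamma - 1$ have no odd prime factor therefore forces $T = \pm 1$, i.e., $\alpha^\gamma - 1 = \pm 2$, i.e., $\alpha^\gamma \in \{3,-1\}$. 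The only integer solutions with $\gamma \geq 1$ odd are $(\alpha,\gamma) = (3,1)$ for $\alpha^\gamma = 3$ and $\alpha = -1$ with $\gamma$ any odd positive integer for $\alpha^\gamma = -1$, exactly the two families in the statement. Any other admissible pair $(\alpha,\gamma)$ has $\ell(W) = 3$ by the dichotomy of the first paragraph.

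There is no real obstacle here: the substantive work (determining the structure and derived length of $G_p/N_p$ in each of Cases~1, 2, 3) has already been done in the preceding sections, and the only new ingredient is the elementary number-theoretic enumeration above. The only mild care required is in correctly gathering, across the three cases, the fact that $(p,h,n) = (2,1,0)$ is the unique situation in which $[W,W]_p$ is abelian; this is immediate from the case list in the discussion preceding the theorem.
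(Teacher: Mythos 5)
Your proposal is correct and follows essentially the same route as the paper: the reduction $\ell(W)=1+\max_p\ell([W,W]_p)$, the dichotomy $\ell([W,W]_p)=1$ exactly when $(p,h,n)=(2,1,0)$ (the $Q_{16}$ case) and $\ell([W,W]_p)=2$ otherwise, and then the elementary factorization $\alpha^\gamma-1=(\alpha-1)(1+\alpha+\cdots+\alpha^{\gamma-1})$ to pin down $\alpha^\gamma\in\{3,-1\}$. The paper leaves that last number-theoretic enumeration implicit, and your explicit version of it is accurate.
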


%The simplest example of $\ell(W)=4$ occurs when $\alpha=2$ and $\gamma=18$.

%where $C_q$ denotes a cyclic group of size $q$.

%Note that in Cases 1 and 2, everything works well $J/Z_3(J)$, which implies that $N_p$ is included in $Z_3(J)$.
%But this inclusion is likely proper.

\end{document}